\documentclass[12pt]{amsart}

\usepackage{layout} 
\usepackage[top=2cm, bottom=2cm, left=2cm, right=2cm]{geometry} 
\usepackage[english]{babel}
\usepackage[utf8]{inputenc}
\usepackage[T1]{fontenc}
\usepackage{amsmath}
\usepackage{amssymb}
\usepackage{mathrsfs}
\usepackage{amsthm}
\usepackage{enumerate} 
\usepackage{hyperref}
\usepackage{enumitem}
\usepackage{comment}
\usepackage{mathtools}
\usepackage{esint}
\usepackage{tikz}
\usepackage{faktor}
\usepackage{appendix}
\usepackage{color}
\usepackage[capitalize]{cleveref}

\numberwithin{equation}{section}
\theoremstyle{thmstyleone}%
\newtheorem{theorem}{Theorem}[section]
\newtheorem{proposition}[theorem]{Proposition}
\newtheorem{lem}[theorem]{Lemma}
\newtheorem{cor}[theorem]{Corollary}
\newtheorem{thmx}{Theorem}

\theoremstyle{thmstyletwo}%
\newtheorem{example}[theorem]{Example}
\newtheorem{remark}[theorem]{Remark}

\theoremstyle{thmstylethree}%
\newtheorem{definition}[theorem]{Definition}

\title[Einstein Type Systems on Complete Manifolds]{Einstein Type Systems on Complete Manifolds}
\date{\today}

\author{Rodrigo Avalos}
\address[Rodrigo Avalos]{Eberhard Karls Universität Tübingen, Fachbereich Mathematik, Auf der Morgenstelle 10, 72076 Tübingen, Germany}
\email{rodrigo.avalos@mnf.uni-tuebingen.de}

\author{Jorge Lira}
\address[Jorge Lira]{Mathematics Department, Universidade Federal do Ceará, Parquelândia, Fortaleza, 60020-181, Cear\'a, Brazil}
\email{jorge.lira@mat.ufc.br}

\author{Nicolas Marque}
\address[Nicolas Marque]{Institut \'Elie Cartan de Lorraine, Université de Lorraine}
\email{nicolas.marque@univ-lorraine.fr}

\begin{document}
	
	\maketitle
	
	\begin{abstract}
		In this paper, we study  the coupled Einstein constraint equations on complete manifolds through the conformal method, focusing on non-compact manifolds with flexible asymptotics. This is physically well-motivated by standard cosmological space-times with non-compact Cauchy hypersurfaces, which favour general bounded geometry manifolds rather than a specific model for infinity. First, we prove an existence criterion on complete manifolds with appropriate barrier functions  for physically well-motivated coupled systems. Then, in the bounded geometry case, we build barrier functions and thus show existence. We also prove an existence result on compact manifolds with boundary for a wider family of coupled systems.
	\end{abstract}

	

\section{Introduction}\label{sec1}

In this article we analyse existence results for relativistic initial data via the conformal method \cite{CB-ConformalMethod.1,York1,York2}, 
 focusing on constructions on complete non-compact manifolds without special asymptotic conditions. This work is motivated by well-established programs devoted to the analysis of geometric partial differential equations (PDEs) on such complete manifolds, exemplified by \cite{AlbanseRigoli.1,AlbanseRigoli.2,GunPig,GunPig2,MR2962687}, and by recent advances in the analysis of the coupled Einstein constraint equations (ECE) of general relativity (GR),  initiated by \cite{Holst1,MaxwellFarCMC}. Within GR a space-time is defined as a Lorentzian manifold $(V^{n+1},\bar{g})$ satisfying the Einstein equations:
\begin{align}\label{EinsteinEqs}
\mathrm{Ric}_{\bar{g}} - \frac{R_{\bar{g}}}{2}\bar{g} + \Lambda\bar{g}=T(\bar{g},\bar{\psi}),
\end{align} 
where $\mathrm{Ric}_{\bar{g}}$ and $R_{\bar{g}}$ stand for the Ricci tensor and scalar curvature respectively; $\Lambda$ stands for the \emph{cosmological constant}, and $T$ stands for the \emph{energy-momentum} tensor field associated with some physical model.  Such $T$ will typically depend both on the space-time metric $\bar{g}$ and some collection of physical fields, here collectively denoted by $\bar{\psi}$. Demanding \emph{physically reasonable} causality conditions imposes topological restrictions on $V^{n+1}$. In particular, we always assume \emph{global hyperbolicity} which excludes backward travels in time and imposes that $V^{n+1}\cong M^n\times \mathbb{R}$ \cite{SanchezSplittingThm,SanchezSplittingThm2,GerochSplittingThm}.

In the above context, an initial data set for GR is given by the manifold $M^n$ together with  the induced Riemannian metric $g$ and its initial time derivative, 
 essentially given in terms of the extrinsic curvature $K$ of $M^n$ as an embedded hypersurface, which we define according to the convention
\begin{align*}
K(X,Y)\doteq \langle \mathbb{II}(X,Y), \mathcal{N} \rangle_{\bar{g}} \text{ for all } X,Y\in \Gamma(TM).
\end{align*}
Above $\mathbb{II}(X,Y) \doteq \left(  \overline{\nabla}_XY  \right)^\perp$ stands for the second fundamental form of $(M^n,g)\hookrightarrow (V^n,\bar{g})$ and $\mathcal{N}$ for the future pointing unit normal to $M$. The Gauss-Codazzi equations for hypersurfaces imply that the initial data $(M^n,g,K)$ must satisfy the Einstein constraint equations:
\begin{align}\label{ECE}
\begin{split}
R_g-\vert K \vert^2_g+(\mathrm{tr}_gK)^2 - 2\Lambda&=2\epsilon,\\
\mathrm{div}_gK-d\mathrm{tr}_gK&=J,
\end{split}
\end{align}
where above $\epsilon\doteq T(\mathcal{N},\mathcal{N})$ denotes the energy density induced by the matter fields, while $J\in \Gamma(T^{*}M)$ given by $J(X) \doteq -T(\mathcal{N},X)$ is the momentum density associated with these fields. In most cases of interest, these equations are also sufficient conditions for $(M^n,g,K)$ to admit evolution into a space-time satisfying (\ref{EinsteinEqs}) (see Y. Choquet-Bruhat's \cite{CB-WellPosedness1,CB-WellPosedness2} or self-contained and updated reviews in \cite{MR2473363,RingstromBook}). This strongly motivates the study of (\ref{ECE}) and the understanding of the associated space of solutions, which are topics connected with classic curvature prescription problems in geometric analysis.

The method best understood to deal with (\ref{ECE}) is the \emph{conformal-method}, which transforms it into a determined system of elliptic PDEs by splitting $(g,K)$ according to the choices
\begin{align}\label{ConformalData1}
\begin{split}
g=\phi^{\frac{4}{n-2}}\gamma,\;\; K=\phi^{-2}\left(\pounds_{\gamma,\mathrm{conf}}X + U \right)+ \frac{\tau}{n}g.
\end{split}
\end{align}
Above $(M^n,\gamma)$ is a fixed Riemannian manifold and 
\begin{align*}
\pounds_{\gamma, \mathrm{conf}}X&\doteq \pounds_{X}\gamma - \frac{2}{n}\mathrm{div}_{\gamma}X\: \gamma, \text{ for all } X\in \Gamma(TM)
\end{align*} 
is the conformal Lie derivative whose kernel consists of the conformal Killing fields (CKFs) of $\gamma$. Also, in (\ref{ConformalData1}), $U$ stands for a $(0,2)$ symmetric TT tensor field (\emph{i.e.} $\mathrm{tr}_{\gamma}U=0$ and $\mathrm{div}_{\gamma}U=0$) and $\tau\doteq \mathrm{tr}_{\gamma}K$ denotes the \emph{mean curvature} of the initial data set $(M,g,K)$. Direct computations show that (\ref{ConformalData1}) transforms (\ref{ECE}) into an elliptic system on $(\phi,X)$: 
\begin{equation}\label{Conformal-GaussCod}
\begin{aligned}
&a_n\Delta_{\gamma}\phi - R_{\gamma}\phi + \vert \tilde{K} \vert^2_{\gamma} \phi^{-\frac{3n-2}{n-2}} + \left(\frac{1-n}{n}\tau^2 + 2\epsilon  \right)\phi^{\frac{n+2}{n-2}}=0,\\
&\Delta_{\gamma,\mathrm{conf}}X - \left(\frac{n-1}{n}\nabla\tau + J\right)\phi^{\frac{2n}{n-2}}=0,
\end{aligned}
\end{equation}
where  $a_n\doteq \frac{4(n-1)}{n-2}$, while $\tilde K \doteq \pounds_{\gamma, \mathrm{conf}}X + U$ and $\Delta_{\gamma,\mathrm{conf}}:\Gamma(TM) \rightarrow \Gamma(T^{*}M)$ is an elliptic operator, known as the conformal Killing Laplacian (CKL), and defined by
\begin{align*}
\Delta_{\gamma,\mathrm{conf}}X\doteq \mathrm{div}_{\gamma}(\pounds_{\gamma, \mathrm{conf}}X).
\end{align*} 
One can then choose a given physical model which determines the sources $\epsilon$ and $J$ and yields an elliptic system with unknowns $(\phi,X)$ and geometric data $\mathcal{I}\doteq (\gamma,\tau,U)$. The scalar equation in (\ref{Conformal-GaussCod}) is referred to as the \emph{Lichnerowicz equation}, the vector equation is the \emph{momentum equation}.

The system \eqref{Conformal-GaussCod} decouples if $\epsilon,J=0$ and $\tau=\text{const}.$, which is known as the CMC (constant mean curvature) case. The momentum constraint then reduces to a selection of a CKF, and all of the analysis is centred on the corresponding Lichnerowicz equation. In this context  \cite{IsenbergCMC} provided a complete classification of the smooth CMC solutions on closed manifolds. Since then, several refinements have been obtained. In particular, the decoupled constraint system has been analysed on non-compact manifolds, including asymptotically Euclidean manifolds (AE)  \cite{CBIY,Maxwell1,MaxwellRoughAE}, asymptotically cylindrical manifolds \cite{ChruscielMazzeo1,ChruscielMazzeo2}, asymptotically hyperbolic (AH) manifolds \cite{ChruscielCMCHyperbolic,IsenbergCMCHyperbolic}. Recently, the decoupled Lichnerowicz equation has been analysed on general complete manifolds in \cite{AlbanseRigoli.1,AlbanseRigoli.2} where both existence and uniqueness results were obtained. Furthermore, low regularity results have been established, for instance in \cite{Maxwell1,MaxwellRoughAE,MaxwellRoughClosed,HolstLichCompact,CB2004,CBIP,arxivrodrigo}, and  some non-vacuum situations have been incorporated. In particular, the system still decouples under a CMC condition if the momentum density scales appropriately under conformal transformations, that is when $J=\phi^{-\frac{2n}{n-2}}\tilde{J}$, where $\tilde{J}$ is a datum constructed from the conformal data $\mathcal{I}$. 

In contrast to the above description, whenever the system is coupled far less is known. Some near CMC results are known to hold through implicit function arguments (see  \cite{CB2004,IsenbergMoncrief}) while the first far-from-CMC results case were due to \cite{Holst1}. This last result appeals to a  fixed point argument, which was modified in \cite{MaxwellFarCMC} to account for the vacuum case, previously left out. These two papers triggered several advances in the analysis of the coupled Gauss-Codazzi system (\ref{ECE}), such as those of \cite{DiltsIsenberMazzeoAE,Gicquaud2,Gicquaud1,HolstAE,HolstFarCMCWithBoundary,Nguyen,Premoselli1,Premoselli2,Valcu}, where some important non-compact manifolds (namely, AE and AH manifolds) were analysed and  some model sources (in particular scalar fields) were incorporated. Furthermore, some non-uniqueness issues have been made clear \cite{Premoselli2,MaxwellModelProblem,Maxwell-KasnerST,MaxwellDriftModel,IsenbergMurchadha,Walsh-nonuniq}, which is a feature which does not occur in the CMC case and has further motivated some variations of the conformal method such as \cite{MaxwellDriftModel}.\footnote{We further refer the reader to \cite{Gicquaud_2025} for recent results on how to restore uniqueness under certain additional conditions.}

The main objective of this paper is to analyse existence results for the coupled Gauss-Codazzi system (\ref{Conformal-GaussCod}) on general complete manifolds, without a specific asymptotic structure. This follows the spirit of the work done by G. Albanese and M. Rigoli in \cite{AlbanseRigoli.1,AlbanseRigoli.2}, but in the perspective of the developments commented above for the coupled system. This procedure will consist of two parts: a general existence criterion which relies on the availability of appropriate barrier functions, and then  explicit constructions for these barriers. Since our analysis will be sensitive to the specific non linearities present in our problem, we will focus on a fairly general and physically well-motivated situation, which is that of energy momentum sources with contributions from a perfect fluid and an electromagnetic field. In such a case, our system takes the following form (see Example \ref{020320211748}):
\begin{equation}\label{EMSystem-CompleteManifolds}
\begin{aligned}
&\!\!\!\!a_n\Delta_\gamma \phi - R_\gamma \phi + \left\vert \tilde K(X) \right\vert^2_\gamma \phi^{-\frac{3n-2}{n-2} } - \frac{n-1}{n} \tau^2 \phi^{\frac{n+2}{n-2} }+ 
2 \epsilon_1  \phi^{\frac{n+2}{n-2}} + 2 \epsilon_2 \phi^{-3} + 2 \epsilon_3 \phi^{\frac{n-6}{n-2} } =0, \\
&\!\!\!\! \Delta_{\gamma, \mathrm{conf}} X - \frac{n-1}{n} \nabla \tau \phi^{\frac{2n}{n-2} } - \omega_1\phi^{2\frac{n + 1}{n-2}} + \omega_2 =0,
\end{aligned}
\end{equation}
Above, the functions $\epsilon_1,\epsilon_2$ and $\epsilon_3$ represent the energy contributions of the physical sources involved, while $\omega_1$ and $\omega_2$ stand for the corresponding momentum densities. 

There are strong physical motivations to construct complete non-compact initial data sets with no fixed asymptotic model. Indeed, most standard cosmological models (such as  FLRW) appear to have non-compact Cauchy hypersurfaces modelling \emph{open universes}.  Moreover, FLRW models are CMC with $\tau_0\neq 0$, which resonates well with our results in Theorem \ref{ExistenceNonVacuum}.

As announced, we will first derive the following general existence criterion, using global barrier functions (see Definition \ref{GlobalBarriersDefn}).

\begin{thmx}\label{ExistenceCriteriaThm}
Let $(M^n, \gamma)$ be a  Riemannian manifold, $n\ge3$, with $\gamma$ a complete smooth metric with no global CKF. Consider the system (\ref{EMSystem-CompleteManifolds}) with coefficients satisfying  
\begin{equation} \label{FunctHypMain} 
R_{\gamma}, \epsilon_1, \epsilon_2, \epsilon_3,  \vert U \vert^2,  \tau^2 \in L^p_{\mathrm{loc}}(M) \text{ and } \omega_1,\omega_2, \nabla \tau \in L^{2}(M,dV_{\gamma})\cap L^p_{\mathrm{loc}}(M),
\end{equation}
for $p>n$.
and assume such system admits a pair of compatible global barrier functions $\phi_{-},\phi_{+}\in W^{2,p}_{loc}$, with $0< \phi_- \le \phi_+ \le m < \infty$, and that the first eigenvalue of the conformal Killing Laplacian satisfies $$\lambda_{1, \gamma, \mathrm{conf} }>0.$$ Then \eqref{EMSystem-CompleteManifolds} admits a $W^{2,p}_{\mathrm{loc}}$ solution $(\phi,X)$.
\end{thmx}

Global barriers were  first introduced in the analysis of the ECE in \cite{Holst1} and are tailored to provide uniform estimates for solutions to the momentum equation in \eqref{EMSystem-CompleteManifolds}, whenever $\phi$ is  replaced by a fixed $\varphi$ such that $\phi_{-}\leq \varphi\leq \phi_{+}$. This, in turn, allows one to produce solutions along a compact exhaustion of $M$ via a Schauder fixed point theorem, which is then coupled with a diagonal extraction scheme to generate a global solution. The necessary uniform controls on $X$ will rely on the first eigenvalue of the CKL, defined by
\begin{align}
\lambda_{1, \gamma, \mathrm{conf} } \doteq \inf_{u \in C^{\infty}_c(M, TM)\backslash \{ 0 \} }  \frac{ \int_M  \left\vert \pounds_{\gamma, \mathrm{conf} } u \right\vert^2 dV_\gamma }{ \int_M \vert u \vert^2 dV_\gamma }.
\end{align}
Positivity of $\lambda_{1, \gamma, \mathrm{conf} }$ allows one to guarantee the existence of solutions  to linear equations involving the CKL 
on a complete manifold $M$, with good estimates  on compacts (see Lemma \ref{050720211748}).

Concerning the functional hypotheses in the above theorem, the $L^p_{loc}$-conditions imposed in (\ref{FunctHypMain}) demand merely local integrability conditions. On the other hand, the momentum sources as well as $\nabla\tau$ are imposed with a global restriction, which forces a control at infinity. These arise from a necessity to have global controls on solutions for the momentum constraint, and a similar control is necessary in our strategy for constructing global supersolutions. It is worth noticing that the conditions in (\ref{FunctHypMain}) clearly allow for a wide range of mean curvature behaviours, allowing for far-from-CMC initial data. In particular, the condition $\nabla \tau\in L^2(M,dV_{\gamma})$ can be interpreted as a \emph{near CMC-condition at infinity}. Given the flexibility of the asymptotic structure of $M$, this can be seen as a substantial improvement on current existence theory for the conformally formulated ECE.

In order to construct global barrier functions for (\ref{EMSystem-CompleteManifolds}), we need to impose some further conditions on the global geometry and on the coefficients of the systems. In particular, in the following theorem establishing our main result, we will assume that $(M,\gamma)$ has \emph{bounded geometry} (see Appendix \ref{BoundedGeometry} for detailed definitions).

\begin{thmx}\label{ExistenceNonVacuum}
Let $(M^n, \gamma)$ be a smooth Riemannian manifold of bounded geometry $n\ge3$  and $p>n$. We make the following assumptions:
\begin{align}\label{FunctinalHypot}
\begin{split}
&R_\gamma, \epsilon_1, \epsilon_2, \epsilon_3,  \vert U \vert^2,  \tau^2 \in L^p_{\mathrm{loc}}(M) \text{ and }  \omega_1,\omega_2, d\tau \in L^{2}(M,dV_{\gamma}) \cap L^p(M,dV_{\gamma}),\\
& \lambda_{1,\mathrm{conf}} >0,\\
&a \doteq R_\gamma + \frac{n-1}{n} \tau^2 \in L^\infty (M), \,  \, a \ge a_0 >0,
\end{split}
\end{align}
for some constant $a_0$. Assume further that:
\begin{equation}\label{EnergyHypot}
\left\{ \begin{aligned}
&\epsilon_2 + \epsilon_3>0  \text{ if } n \le 6\\
& \epsilon_2 >0 \text{ if } n >6.
\end{aligned} \right.
\end{equation}
Then, there exists $C(n,M, \gamma, \lambda_{1, \mathrm{conf}})$  such that if
\begin{equation}\label{MeanCurvHypot}
\begin{split}
\left\vert R_\gamma \right\vert +& \max \left( \| d\tau \|_{L^{2}(M)}, \| d\tau \|_{L^{p}(M)}\right)  + \max \left( \| \omega_1 \|_{L^{2}(M)},\| \omega_1 \|_{L^{p}(M)}\right) \\
&+ \max\left(\| \omega_2 \|_{L^{2}(M)}, \| \omega_2 \|_{L^{p}(M)} \right) + \vert  U\vert+ \epsilon_1 + \epsilon_2 + \epsilon_3  \le C \tau^2,
\end{split}
\end{equation}
then  \eqref{EMSystem-CompleteManifolds} admits a $W^{2,p}_{\mathrm{loc}}$ solution $(\phi,X)$. In addition, if  \eqref{EnergyHypot} is replaced by uniform lower-bounds given by a positive constant $\varepsilon_0>0$,  then the physical metric $g$ is also complete.
\end{thmx}

Manifolds of bounded geometry are manifolds with positive injectivity radius and whose curvature tensor has bounded covariant derivatives. The analysis of the coupled system (\ref{EMSystem-CompleteManifolds}) on general manifolds of bounded geometry can be seen as a natural developing step in the current existence theory associated with the ECE. This also falls along the lines of recent work done on related problems in the context of bounded geometry, such as the Yamabe problem \cite{YamabeEqBG}, and, more generally, we refer the reader to \cite{BoundaryValueRegBG,LaplacianBG,https://doi.org/10.1002/mana.201300007,Shubin,Shubin2,GreeneBG} and references therein for a review of analytic developments in this context. Bounded geometry  also seems to be a reasonable physical hypothesis, further supported by the existence of analytic tools which would allow one to evolve such initial data sets (see, for instance, \cite[Theorem 4.14, Appendix III]{MR2473363}).

Let us furthermore notice that (\ref{MeanCurvHypot}) can be understood as a mean curvature restriction, given $\gamma$ and the energy-momentum sources. Since $\tau\in W^{1,p}_{loc}$ with $p>n$, then $\tau\in C^0_{loc}$, and since $d\tau\in L^2(M,dV_{\gamma})\cap L^p(M,dV_{\gamma})$, then, in an appropriate $L^p$-sense, $d\tau\rightarrow 0$ at infinity. Therefore, assuming $M$ has only one end to simplify the discussion, if we regard $\tau_0$ as an asymptotic value of $\tau$, we suggest decomposing $\tau=\tau_0+\tilde{\tau}$, with $\tilde{\tau}\in W^{1,p}(M,dV_{\gamma})$ being the freely prescribed conformal datum, and leaving $\tau_0$ to be chosen so as to satisfy (\ref{MeanCurvHypot}). In such a case, (\ref{MeanCurvHypot}) fixes a minimum possible value for  $\tau_0$, given the conformal data $(\gamma,\tilde{\tau},  U, \epsilon_1,\epsilon_2,\epsilon_3,\omega_1,\omega_2)$. This slight modification in the conformal decomposition allows for far-from-CMC solutions to (\ref{EMSystem-CompleteManifolds}) with large conformal data $(\gamma,\tilde{\tau},  U, \epsilon_1,\epsilon_2,\epsilon_3,\omega_1,\omega_2)$. To the best of our knowledge, this is a novelty, since even on closed manifolds far-from-CMC initial data typically force a trade-off with smallness conditions on the remaining coefficients of the system (see, for instance, \cite{Holst1,MaxwellFarCMC,Gicquaud2} and a quantitative explicit version of this can be consulted in \cite[Theorem 1]{Gicquaud_2025}).

The above comments about the flexibility of (\ref{MeanCurvHypot}) align well with the objectives set up prior to Theorem \ref{ExistenceCriteriaThm}: obtaining initial data sets without the need of strong decaying conditions. In contrast, if one wanted initial data sets with $\tau\rightarrow 0$ at infinity (as one typically does for isolated gravitational systems), then (\ref{MeanCurvHypot}) would become very restrictive,  demanding $d\tau=0$ and decoupling the system. Nevertheless, for such situations it would be much better to actually start with an AE model for infinity and obtain far-from-CMC constructions along the lines of papers such as \cite{DiltsIsenberMazzeoAE,HolstAE,arxivrodrigo}. Therefore, we regard Theorem \ref{ExistenceNonVacuum} (as well as Theorem \ref{ExistenceVacuum} below) as complementing these references. They also appear as physically complementary, where the cited references deal with isolated physical systems while we are  motivated by open cosmological scenarios.



With the above comments in mind,  (\ref{EnergyHypot}) could be regarded as the most restrictive condition from a mathematical standpoint, since it avoids the special case of vacuum. This motivates the following result, which accounts for vacuum initial data, where the construction of the subsolution is modified by appealing to general results on Yamabe-type equations by \cite{MR2962687}.

\medskip
\begin{thmx}\label{ExistenceVacuum}
Let $(M^n, \gamma)$ be a smooth Riemannian manifold of bounded geometry,  $n\ge3$, $p>n$ and assume (\ref{FunctinalHypot}) holds. Let $r=d_{\gamma}(p,\cdot):M\rightarrow\mathbb{R}$ denote the distance function to a given point $p\in M$. Letting $H,A,B$ be real numbers, $A,B>0$, we make the following additional assumptions:
\begin{equation}\label{YamabeCondtions}
 \begin{aligned}
& \mathrm{Ric}_{\gamma} \ge -(n-1) H^2(1+r^2),\\
& R_\gamma \ge -A, \\
& \vert \tau \vert \ge B >0 \text{ outside a compact set},
& \lambda_1^{-a_n\Delta_\gamma +  R_\gamma}(B_0)>0 \\
& \lambda_1^{-a_n\Delta_\gamma +  R_\gamma}(M)<0, \\
\end{aligned} 
\end{equation}
where $B_0= \{ x \in M \, : \, \tau(x)=0 \}.$ Then, there exists $C(n,M, \gamma, \lambda_{1, \mathrm{conf}})$  such that if (\ref{MeanCurvHypot}) holds then  \eqref{EMSystem-CompleteManifolds} admits a $W^{2,p}_{\mathrm{loc}}$ solution $(\phi,X)$.
\end{thmx}

The above theorem allows for vacuum initial data sets and complements our previous result. Following \cite[Chapter 6]{MR2962687}, given a non-empty open set $\Omega\subset M$, let us recall the definition
\begin{align}
\label{definitionvapsurOmega}
\lambda_1^{-a_n\Delta_{\gamma}+R_{\gamma}}(\Omega)=\Big\{ \inf_{\varphi\in W^{1,2}_0(\Omega)}\int_{\Omega}\left(a_n\vert \nabla \varphi \vert^2_{\gamma}+R_{\gamma}\varphi^2 \right)dV_{\gamma} \: :\: \int_{\Omega}\varphi^2dV_{\gamma}=1\Big\}.
\end{align}
In Theorem \ref{ExistenceVacuum}, we use the hypothesis $\lambda_1^{-a_n\Delta_\gamma + R_\gamma}(B_0)>0$ as a basis to employ results on Yamabe-type equations available in \cite[Chapter 6]{MR2962687}. As discussed in \cite[Section V]{DiltsIsenberg16}, this condition is a recurrent feature appearing also in \cite{maxwell2015yamabeclassificationprescribedscalar,gicquaud2019prescribednonpositivescalar,Gicquaud_2022}.

In the proof of Theorem \ref{ExistenceCriteriaThm} we will need to produce a sequence of solutions along an exhaustion of $M$ by compact subsets, under appropriate boundary conditions. Although the final existence of a solution for \eqref{EMSystem-CompleteManifolds} on the non-compact manifold $M$ is highly sensitive to the specific couplings and non-linearities of the system, the associated boundary value problems on compact sets are far less so. In fact, existence criteria analogous to those of Theorem \ref{ExistenceCriteriaThm} can be presented on a compact manifold with boundary for a far broader class of systems generalising \eqref{EMSystem-CompleteManifolds} and including examples such as charged fluids (see Example \ref{020320211748}). These being interesting systems on their own, in Section \ref{sectionPDEgen} (Theorem \ref{theoexistcompact}) we will broaden the scope of our analysis to present existence criteria for such a larger class of systems on compact manifolds with boundary. 

Although the general approach for Theorems \ref{ExistenceCriteriaThm} and \ref{theoexistcompact} is conceptually similar (both barrier-based existence results), the technicalities surrounding Theorem \ref{ExistenceCriteriaThm} are far more subtle. Among other things, in order to account for coefficients with asymptotics as flexible as possible, we appeal to slightly different notions of barriers than those of Theorem \ref{theoexistcompact}. In the latter case, we introduce \emph{strong} global barriers, tailored for the proof of this theorem and also to the kind of barrier constructions for systems like \eqref{EMSystem-CompleteManifolds} and \eqref{fullsystem} that one could adapt for instance from \cite{Holst1}. These differences will become apparent as we move from Section \ref{sectionPDEgen} to Section \ref{SectionEinsteinSyst}, where key ideas already showcased in Section \ref{sectionPDEgen} are adapted and  refined.

Taking all of the above into account, this paper will be organised as follows. First, in Section \ref{sectionPDEgen} we will analyse Einstein-type elliptic systems on compact manifolds with boundary. In Section \ref{SectionEinsteinSyst} we will prove Theorem \ref{ExistenceCriteriaThm}. Then, in Section \ref{section210720211556} we will proceed to construct the appropriate barrier functions for the application of Theorem \ref{ExistenceCriteriaThm} and prove Theorems \ref{ExistenceNonVacuum} and \ref{ExistenceVacuum}. Finally, in Appendix \ref{appendixlinearanalysisellipticboundaryvalueproblem} we review some properties of linear operators involved in our analysis on compact manifolds with boundary, while in Appendix \ref{BoundedGeometry} we present several useful definitions and results associated with manifolds of bounded geometry and linear elliptic operators one can define on them.

\section{Einstein-type systems on a compact manifold with boundary}\label{sectionPDEgen}


\medskip
{\noindent \textbf{Remark concerning the analytic setting}}: Below, the associated PDE problems will be posed for equations with coefficients of different degrees of Sobolev regularity $W^{k,p}(M)$. In the case of the trace-values of solutions on the boundary $\Sigma$, the associated interpolating spaces $W^{k-1/p,p}(\Sigma)$ are given in terms of the trace spaces described in \cite{Adams}[Chapters V, VII]. The analysis of the linear operators associated with (1.5) on such compact manifolds with boundary is reviewed in Appendix \ref{appendixlinearanalysisellipticboundaryvalueproblem}.

\subsection{Einstein-type systems}
Let us consider $(M^n,  \gamma)$  a compact Riemannian manifold with smooth boundary $\partial M \doteq \Sigma $ and $\gamma \in W^{2,p}(M)$, $p>n$. On this manifold, let us consider an equation in $M$ of unknown $\Psi = \left( \phi, Y \right) \in \Gamma(E)$, where $E$ is a  vector bundle  of shape $\left(M\times \mathbb{R}\right)\oplus \left[\oplus_{j=1}^l T^{k_j}_{r_j} M \right]$. This equation has the following form:

\begin{equation}
\label{300320201529}
\left\{
\begin{aligned}
\Delta_\gamma \phi &= \sum_I a^0_I(Y) \phi^I  \text{ in } M\\
L^i (Y^i) &= \sum_J a^i_J(Y) \phi^J \text{ } \forall i \in 1 \dots l \text{ in } M \\
\phi &= u \text{ on } \Sigma \\
Y^i &= v^i \text{ on } \Sigma.
\end{aligned}
\right.
\end{equation}
where
\begin{itemize}
\item
$\left(L^i, B^i \right)$ are linear elliptic operators with Dirichlet conditions, acting as maps $W^{2,p}(M) \rightarrow L^p(M) \times W^{2-\frac{1}{p},p}\left(\Sigma\right)$, $Y^i\mapsto (L^iY^i,B^iY^i)$, where $B^i:W^{2,p}(M)\to W^{2-\frac{1}{p},p}\left(\Sigma\right)$ denotes the trace map associated with sections of the tensor bundle $T^{k_j}_{r_j} M $. We assume they are invertible for $p>n$,
\item
$\left( a^\alpha_T  \right)^{\alpha=0 \dots, l}_{ T = I, J} :\, W^{2,p} \left( M \right) \rightarrow L^p(M)$  are maps which can depend on $Y$ and $\nabla Y$,
\item
$u$, $v^i \in W^{2- \frac{1}{p}, p}(\Sigma)$,
\end{itemize}
with $p>n.$
We further make the following hypotheses on the operators: 
\begin{itemize}
\item
\emph{Boundedness: }
There exists $\rho_0>0$ such that for all $I$,  $\exists f_I \in L^p( M)$ and constants $C^\alpha$ such that denoting $\overline{B}_{\rho_0}$  a closed ball of radius $\rho_0$ centred at the origin:
\begin{equation}
\label{300320201610}
\forall Y^i \in \overline{B}_{\rho_0} \subset W^{2,p}(M),  \left\vert a^0_I (Y)\right\vert \le f_I, \, \|a^\alpha_I(Y)\|_{L^p(M)} \le C^\alpha \text{ for } \alpha = 0 \dots l.
\end{equation}
\item
\emph{Continuity:}  if $Y_k \rightarrow Y$ in $C^1(M)$,
\begin{equation}
\label{300320201614bis}
\forall \alpha = 0 \dots l  \quad \begin{aligned}  a^\alpha_I(Y_k) &\rightarrow a^\alpha_I(Y) \text{ in } L^p(M). 
\end{aligned} 
\end{equation}
\item \emph{Boundedness of the inverse: } In addition to the invertibility of the operators $(L^i,B^i)$, we assume they satisfy the following estimates:
\begin{equation}
\label{ellipticL}
\!\!\!\!\!\!\!\!\left\| Y\right\|_{W^{2,p} \left( M \right) } \le C^i \left( \left\| L^i Y \right\|_{L^p (M)} + \left\| B^i Y \right\|_{W^{2-\frac{1}{p}, p} (\Sigma_1 ) } \right).
\end{equation}
\end{itemize}

\begin{definition}
\label{confeinstsyscomp}
We will call a system of the form \eqref{300320201529} satisfying \eqref{300320201610}, \eqref{300320201614bis} and \eqref{ellipticL} on a compact Riemannian manifold $(M, \gamma)$  a \emph{conformal Einstein-type system}.
\end{definition}

These kind of systems were introduced by the first two authors (see \cite{arxivrodrigo}), and their definition was shaped to encompass and mimic the Lichnerowicz equation with electromagnetic and momentum constraints as described in the following example.
\begin{example}\label{020320211748}
Let us consider the Einstein-Maxwell constraints    associated with the  space-time field equations with sources modelling a charged fluid. They are obtained by coupling \eqref{ECE} with the system of constraints induced by a space-time electromagnetic $2$-form $\bar{F}$, given by  
\begin{align}\label{constraints}
\begin{split}
&\mathrm{div}_gE=\tilde{q}\doteq q\bar{N}u^0\vert_{t=0},\\
&dF=0,
\end{split}
\end{align}
where, in addition to the conventions used in  introduction,  $E\doteq \bar{F}(\cdot,\mathcal{N})\vert_{t=0}$ is the $1$-form  associated with the electric-field  of the charged fluid, 
$\bar{N}$ represents the lapse function associated with the orthogonal splitting of the Lorentzian metric, $u$ is the velocity field  of the charged fluid and $F\in \Omega^2(M)$ is the induced $2$-form that arises from restricting  $\bar{F}$ to  vectors tangent to $M$. Since the magnetic constraint $dF=0$ is always decoupled from the rest of the system, we consider that such a choice of a closed $2$-form $F$ has been made.

Using the conformal method on \eqref{ECE}, setting $E = \phi^{-\frac{2n}{n-2}} \tilde E$ and $\tilde{E}=\nabla f + \vartheta\in \Gamma(T^{*}M)$,  $\mathcal{\vartheta}$ a datum and $f$ an unknown, one can transform the above constraints into the following system posed for $(\phi,X,f)\in \Gamma(\mathcal{M})$, where $\mathcal{M}=\left(M\times \mathbb{R}\right)\oplus TM\oplus \left(M\times\mathbb{R}\right)$ (see \cite[(9)-(10)]{arxivrodrigo}).
\begin{equation}\label{fullsystem}
\!\!\!\!\!\!\!\!\!\!\!\!\left\{
\begin{aligned}
&\Delta_{\gamma}\phi - c_n R_{\gamma}\phi + c_n \vert \tilde{K} \vert^2_{\gamma}\phi^{-\frac{3n-2}{n-2}} + c_n\left(2\epsilon_1  - \frac{n-1}{n}\tau^2\right)\phi^{\frac{n+2}{n-2}} + 2c_n\epsilon_2\phi^{-3} + 2c_n\epsilon_3\phi^{\frac{n-6}{n-2}} =0,\\
&\Delta_{\gamma, \mathrm{conf}}X  -  \frac{n-1}{n}\nabla\tau \phi^{\frac{2n}{n-2}} - \omega_1\phi^{2\frac{n + 1}{n-2}} + \omega_2 = 0,\\
&\Delta_{\gamma}f=\tilde{q}\phi^{\frac{2n}{n-2}}
\end{aligned}
\right.
\end{equation}
where, given the mass density of the fluid $\mu$:
\begin{align*}
\epsilon_1=\mu\left( 1 + \vert \tilde{u} \vert^2_{\gamma} \right) \; , \; &\epsilon_2=\frac{1}{2}\vert\tilde{E}\vert^2_{\gamma} \; , \; \epsilon_3=\frac{1}{4}\vert\tilde{F}\vert^2_{\gamma},\\
{\omega_1}_k=\mu\left( 1 + \vert\tilde{u}\vert^2_{\gamma} \right)^{\frac{1}{2}}\tilde{u}_k \; , \; &{\omega_2}_{k}=F_{ik}\tilde{E}^i \; ,\; \tilde{q}=q(1+\vert\tilde{u}\vert^2_{\gamma})^{\frac{1}{2}}.
\end{align*}
The  third equation  is called the \emph{electric constraint} and $f$ is the electric potential. Then, prescribing Dirichlet boundary values for $(\phi,X,f)$ along $\Sigma$ yields a system
of the form of (\ref{300320201529}) and, once we declare the free parameters in appropriate function spaces, one can show show that the above system is of conformal Einstein type, since elliptic estimates follow from (\ref{MaxwellsEstimate.1})-(\ref{MaxwellsEstimate.2}), and the estimates (\ref{ellipticL}) follow again from Theorem \ref{CKLIsoThm}. We refer the reader to \cite[Lemma 1]{arxivrodrigo} for the remaining details. System \eqref{EMSystem-CompleteManifolds}, investigated in Section \ref{SectionEinsteinSyst}, can be seen as the $q=0$ case.
\end{example}

\subsection{$W^{2,p}$ existence}
We will use a fixed point scheme relying on \emph{strong global  barrier functions}:

\begin{definition}
\label{strigglobalbarrierscomp}
A conformal Einstein-type system on a compact Riemannian manifold $(M, \gamma)$ admits  strong global barrier functions $\phi_-$ and $\phi_+$ if there exist two real numbers $l$ and $m$ such that $0<l \le \phi_- \le \phi_+ \le m$, and  real numbers $\left( M_{Y^i} \right)_{i=1 \dots r}$, $M_{Y^i} \le \rho_0$, such that,  if we denote $\overline{B}_{M_{Y^i}}$ the closed ball of radii $M_{Y^i}$ in $W^{2,p}(M)$ and $\overline{B}_{M_Y} = \times_i \overline{B}_{M_{Y^i}}$, then the following three conditions are satisfied:

\begin{equation} \label{invariantcondition} \forall{ \phi_- \le \varphi \le \phi_+}, \, Z \in \overline{B}_{M_Y}, \,  Y \in W^{2,p}(M),  
\left\{
\begin{aligned}
L^i (Y^i) &= \sum_J a^i_J(Z) \varphi^J \text{ in } M \\
Y^i &= v^i \text{ on } \Sigma
\end{aligned}
\right. 
\implies Y \in \overline{B}_{M_{Y}},
\end{equation}
\begin{equation}
\label{300320201549}
\forall{Y \in \overline{B}_{M_{Y}}}\  \quad \left\{ \begin{aligned} \Delta_\gamma \phi_- &\ge \sum_I a^0_I(Y) \phi_-^I  \text{ in } M \\ 
\phi_- &\le u \text{ on } \Sigma, \end{aligned} \right.
\end{equation}
and
\begin{equation}
\label{300320201549}
\forall {Y \in \overline{B}_{M_{Y}}}  \quad \left\{ \begin{aligned} \Delta_\gamma \phi_+ &\le \sum_I a^0_I(Y) \phi_+^I  \text{ in } M\\ 
\phi_+ &\ge u \text{ on } \Sigma. \end{aligned} \right.
\end{equation}
\end{definition}

To simplify notations, we will denote $\mathcal{P}$ the left-hand part of \eqref{300320201529}  and  $F(\phi, Y) \doteq \left(h_Y(\phi), h^i_Y(\phi), u, v^i \right)$ its right-hand part, containing all the non-linearities. In the same manner, given  $a \in L^p (M)$, we will denote $\mathcal{P}_{a}$ and $F_{a}$ the shifted operators:
\begin{equation}
\label{300320201647}
\begin{aligned}
h^a_Y ( \phi) &= h_Y(\phi) -a \phi =  \sum_I a^0_I (Y) \phi^I  - a \phi \\
\end{aligned}
\end{equation}
\begin{equation}
\label{300320201704}
\mathcal{P}_{a} \,: \, \left\{
 \begin{aligned}
W^{2,p}(M) &\to L^p(M) \times W^{2- \frac{1}{p}, p} (\Sigma) \\
\Psi=(\phi, Y)& \mapsto \left( \Delta_\gamma \phi - a \phi, L^i (Y^i) , B_0\phi , B^i(Y^i)\right),
\end{aligned}
 \right.
\end{equation}
and 
\begin{equation}
\label{300320201720}
\mathrm{F}_{a}\,: \, \left\{
 \begin{aligned}
L^\infty(M) \times W^{2,p}(M) &\rightarrow L^p(M) \times W^{2- \frac{1}{p}, p} (\Sigma) \\
\Psi=(\phi, Y)& \mapsto \left(h^a_Y(\phi) , h^i_Y(\phi),u, v^i \right).
\end{aligned}
 \right.
\end{equation}

By invertibility assumption on the operators $L^i$ and Theorem \ref{ShiftedRoughLapBelIso}, both $\mathcal{P}$ and $\mathcal{P}_{a}$ are invertible with bounded inverses $\mathcal{P}^{-1}$ and $\mathcal{P}^{-1}_{a}$. 

Given a pair of strong global barrier functions  on $M$, in order to apply maximum principles, we will choose $a$ such that $h^a_Y$  are non increasing in $\phi$.
Given any $Y \in \times_i \overline{B}_{M_{Y^i}}$ one has, thanks to \eqref{300320201610}:
\begin{equation}
\label{300320201647}
\begin{aligned}
\left\vert \partial_\phi h_Y(\phi) \right\vert &\le \left\vert\sum_I a^0_I (Y) I \phi^{I-1} \right\vert \le \sum_I \vert I\vert f_I \sup_{l\le \phi \le m} \phi^{I-1}.
\end{aligned}
\end{equation}
Thus, if we set 
\begin{equation}
\label{300320201647}
\begin{aligned}
h^a_Y ( \phi) &= h_Y(\phi) -a \phi 
\end{aligned}
\end{equation}
with 
\begin{equation} \label{aandbforcomp} \begin{aligned}
a \in L^p (M)  \text{ s.t. } &a > \sum_I I f_I \sup_{l\le y \le m } y^{I-1}  
\end{aligned}\end{equation}
one has $\partial_\phi h^a_Y (\phi)
<0$ on $[l,m]$ ($l$ and $m$ being the lower and upper bounds in Definition \ref{strigglobalbarrierscomp}), meaning that $h^a_Y$ 
 is a non-increasing function on $[l,m]$ for all $Y \in \overline{B}_{M_Y}$.

We will show the existence of a solution to the system \eqref{300320201529} using a Schauder fixed point theorem (see \cite[Corollary B.3, Appendix B, Chap. 14]{TaylorBook3}):

\begin{theorem}
\label{fixpointschauder}
Let $B$ be a Banach space and $U\subset B$ a closed and convex subset. Let $\mathcal{F}: \, U \rightarrow U$ be a continuous map such that $\overline{\mathcal{F}(U)}$ is compact. Then $\mathcal{F}$ has a fixed point.
\end{theorem}

From this, one can show:

\begin{theorem}
\label{theoexistcompact}
Let $(M^n, \gamma)$ be a compact manifold with boundary, let $p>n$ and consider a conformal Einstein-type system \eqref{300320201529} on $M$. Assume that there exists a pair of strong global barrier functions $0<l\le \phi_- \le \phi_+\le m$. Then the system admits a solution $\psi= (\phi, Y) \in W^{2,p}(M)$ with $\phi >0$.
\end{theorem}

\begin{proof}

Let $\mathcal{U}_0 = \{ \psi = (\phi, Y) \in W^{2,p}(M) \text{ s.t. } \phi_- \le \phi \le \phi_+ , \, Y \in \overline{B}_{M_Y} \}$, and  define, for $a$ as in \eqref{aandbforcomp}, the solution map:
$$\mathcal{F}_{a} :  \left\{
\begin{aligned}
& W^{2,p}(M) \rightarrow W^{2,p}(M) \\
& \psi = \left( \phi, Y \right) \mapsto \mathcal{F}_{a} = \mathcal{P}_{a}^{-1} \circ F_{a} (\psi)
\end{aligned} \right.$$
One may notice that the shift  only happens on the equation in $\phi$. Thus, if  $(\phi, Y) \doteq \mathcal{F}_a \left(\varphi, Z \right)$, $Y$ still satisfies
$$\left\{
\begin{aligned}
L^i (Y^i) &= \sum_J a^i_J(Z) \varphi^J \text{ in } M \\
Y^i &= v^i \text{ on } \Sigma.
\end{aligned} \right.$$
Consequently, if $(\varphi, Z) \in \mathcal{U}_0$, \eqref{invariantcondition} ensures $Y  \in \overline{B}_{M_Y}$.

\medskip
\underline{ {\bf Claim 1:} $\mathcal{F}_a$ is bounded and continuous on $\mathcal{U}_0$ }

Let $(\varphi, Z) \in \mathcal{U}_0$ and let $(\phi, Y ) \doteq \mathcal{F}_a(\varphi, Z)$. Then, $\mathcal{P}_a (\phi, Y)= F_a(\varphi, Z)$ and by invertibility of $(\Delta_\gamma -a, B_0)$ (see Theorem \ref{ShiftedRoughLapBelIso}) there exists a constant $C$ depending on $M, \gamma, \Sigma$ such that:
\begin{equation} \label{phibounded}\begin{aligned}
\| \phi \|_{W^{2,p}(M)} ¨
&\le C \big(\|a\|_{L^p(M)} m +  \sum_I \left\| a^0_I (Z) \right\|_{L^p(M)} \max ( l^I, m^I ) + \| u \|_{W^{2-\frac{1}{p}, p}(\Sigma)} \big) \\
&\le C \big( \|a\|_{L^p(M)} m+ \sum_I C^0 \max ( l^I, m^I ) + \| u \|_{W^{2-\frac{1}{p}, p}(\Sigma)} \big)  \\ 
&\le C_0(M, \gamma, \Sigma, (M_Y^i)_i, l,m),
\end{aligned}\end{equation}
using the boundedness hypotheses \eqref{300320201610} and the fact that $Z \in \overline{B}_{M_Y}$. In addition, considering $(\varphi_1, Z_1), (\varphi_2, Z_2)  \in \mathcal{U}_0$,  $(\phi_1, Y_1 ) \doteq \mathcal{F}_a(\varphi_1, Z_1 ),$  $(\phi_2, Y_2 ) \doteq \mathcal{F}_a(\varphi_2, Z_2)$, by linearity of $\mathcal{P}_a$, $(\phi_1- \phi_2, Y_1- Y_2)$ solve 
$$\left\{ \begin{aligned}
 &\left( \Delta_\gamma - a \right)( \phi_1 - \phi_2) = h_a(\varphi_1) - h_a (\varphi_2) \text{ in } M \\
& L^i (Y^i_1- Y^i_2) = \sum_J a^i_J (Z_1) \varphi_1^J -a^i_J (Z_2) \varphi_2^J  \text{ in } M\\
& \phi_1 - \phi_2= 0  \text{ on } \Sigma \\
& Y^i_1 - Y^i_2 = 0 \text{ on } \Sigma. 
\end{aligned} \right.$$

Once more, by the invertibility of $(\Delta_\gamma -a, B_0)$ there exists a constant $C$ depending on $M, \gamma, \Sigma$ such that:
$$ \label{pseudoLipschiptz1}\begin{aligned}
\| \phi_1 - \phi_2 \|_{W^{2,p}(M)}  
&\le C \big( \|a\|_{L^p(M)} \| \varphi_1 -\varphi_2 \|_{L^\infty(M)} + \sum_I \| a^0_I(Z_1) - a^0_I(Z_2)\|_{L^p(M)} \| \varphi_1^I  \|_{L^\infty(M)}  \\ 
& + \sum_I\|  a^0_I(Z_2)\|_{L^p(M)} \| \varphi_1^I -\varphi_2^I \|_{L^\infty(M)}\big) \\ 
&\le C \big( \|a\|_{L^p(M)} \| \varphi_1 -\varphi_2 \|_{L^\infty(M)} + \sum_I \| a^0_I(Z_1) - a^0_I(Z_2)\|_{L^p(M)} \max ( l^I, m^I )   \\ 
& + C^0 \| \varphi_1^I  -\varphi_2^I \|_{L^\infty(M)}\big),
\end{aligned}$$
using the boundedness hypotheses \eqref{300320201610} and the controls $Z_1, Z_2 \in \overline{B}_{M_Y}$. Now,  since $0<l \le \phi_- \le \varphi_1, \varphi_2\le \phi_+ \le m$, for all $I$ the maps $t\mapsto t^I$ are Lipschitz on $[l,m]$ and there exists a constant $C(M, \gamma, \Sigma, (M_{Y^i})_i,  l, m)$ such that:
\begin{equation} \label{pseudoLipschiptz1}
\begin{aligned}
\| \phi_1 - \phi_2 \|_{W^{2,p}(M)}  & \le C \big( \| \varphi_1 - \varphi_2 \|_{L^{\infty}(M)} +   \sum_I \| a^0_I(Z_1) - a^0_I(Z_2)\|_{L^p(M)}\big).
\end{aligned}
\end{equation}

Similarly  the invertibility of $L^i$ (given by \eqref{ellipticL}) ensures there exists a constant $C$ depending on $M, \gamma, \Sigma$ such that for all $i$: 
$$\begin{aligned}
\| Y_1^i - Y_2^i\|_{W^{2,p}(M)}  
&\le C \big(\sum_J \| a^i_J(Z_1) - a^0_J(Z_2)\|_{L^p(M)} \| \varphi_1^J  \|_{L^\infty(M)} + \|  a^i_J(Z_2)\|_{L^p(M)} \| \varphi_1^J -\varphi_2^J \|_{L^\infty(M)} \big),
\end{aligned}$$
which once again yields the existence of a constant $C(M, \gamma, \Sigma, (M_{Y^i})_i,  l, m)$ such that for all $i$:
\begin{equation} \label{pseudoLipschiptz2}
\begin{aligned}
\| Y_1^i - Y_2^i\|_{W^{2,p}(M)}  &  \le C \big(  \sum_J \| a^i_J(Z_1) - a^i_J(Z_2)\|_{L^p(M)}+ \| \varphi_1  -\varphi_2 \|_{L^\infty(M)} \big).
\end{aligned}
\end{equation}
Given the continuity hypothesis \eqref{300320201614bis}, the estimates \eqref{pseudoLipschiptz1} and \eqref{pseudoLipschiptz2} combine with Sobolev embeddings and ensure that $\mathcal{F}_a$ is continuous on $\mathcal{U}_0$.

Let us define $\tilde C_0 \doteq C_0+ \|\phi_- \|_{W^{2,p}(M)}$ with $C_0$ the final constant in \eqref{phibounded}, and consider $\mathcal{U} \doteq \{ \psi = (\phi, Y) \in W^{2,p}(M) \text{ s.t. } \phi_- \le \phi \le \phi_+ , \, \phi \in \overline{B}_{\tilde C_0}, \, Y \in \overline{B}_{M_Y} \}$.  It is straightforwardly a nonempty (since $(\phi_-, 0) \in \mathcal{U}$) closed convex subset of $W^{2,p}(M)$. 

\bigskip
\underline{ {\bf Claim 2:}  $\mathcal{F}_a(\mathcal{U}) \subset \mathcal{U}$. }

Consider  $(\varphi, Z)\in \mathcal{U}$,  $(\phi, Y ) \doteq \mathcal{F}_a(\varphi, Z)$. As detailed above Claim 1, $Y \in \overline{B}_{M_Y}$ by \eqref{invariantcondition}, while \eqref{phibounded} ensures $\phi \in \overline{B}_{C_0} \subset \overline{B}_{\tilde C_0}$. There just remains to show that $\phi_- \le \phi \le \phi_+$. We can write:
$$ \left\{ \begin{aligned}
& \left( \Delta_\gamma - a \right) \left( \phi - \phi_- \right) \le h^a_{Z} (\varphi) - h^a_{Z} \left( \phi_- \right) \le 0, \text{ in } M\\
& \phi - \phi_-  \ge 0 \text{ on } \Sigma,
\end{aligned}
\right.
$$
since  $ \phi_-$ is a strong global subsolution, $Z \in \overline{B}_{M_Y}$, $\varphi  \ge \phi_-$ by hypothesis, and $h^a_{Z}$ is non-increasing by construction. By the maximum principle given in Proposition \ref{MaxPrinciplesLaplace} one thus has $\phi\ge \phi_-$. Using the same reasoning to compare $\phi $ to the supersolution $\phi_+$ we conclude that $\phi  \le \phi_+$, which proves the claim.

\bigskip
\underline{ {\bf Claim 3:} $\mathcal{F}_a$ is continuous on $\mathcal{U}$ and $\mathcal{F}_a(\mathcal{U})$ is compact. }

Since $\mathcal{F}_a$ is continuous on $\mathcal{U}_0$ and $\mathcal{U}\subset \mathcal{U}_0$, the first part of the claim is clear. To show that $\mathcal{F}_a(\mathcal{U})$ is compact, and since $\mathcal{U} \subset W^{2,p}(M)$ which is metric, it is enough to show sequential compactness. Let us thus consider $\{\varphi_k, Z_k\}_{k=0}^\infty \subset \mathcal{U}$ and $(\phi_k, Y_k) = \mathcal{F}_a (\varphi_k, Z_k)$.  By definition $\varphi_k \in \overline{B}_{\tilde C_0}$ and $Z_k \in \overline{B}_{M_Y}$, the sequence $\{\varphi_k, Z_k\}_{k=0}^\infty $ is thus uniformly bounded in $W^{2,p}(M)$. By weak sequential compactness, and the compactness of the Sobolev embeddings in $W^{2,p} \hookrightarrow C^1, L^\infty$ for $p>n$,
 there exists an extraction $f :\, \mathbb{N} \rightarrow \mathbb{N}$ and $(\varphi,Z) \in \overline{B}_{\tilde C_0} \times \overline{B}_{M_Y}$ such that $(\varphi_{f(k)}, Z_{f(k)} ) \rightharpoonup (\varphi, Z)$ weakly in $W^{2,p}(M)$, $\varphi_{f(k)} \rightarrow \varphi$ strongly in $L^\infty(M)$ and $Z_{f(k)} \rightarrow Z$ strongly in $C^1(M)$.
By continuity hypotheses \eqref{300320201614bis}, for all $\alpha$ and $ I$ we have $a^{\alpha}_I(Z_{f(k)}) \rightarrow a^{\alpha}_I(Z)$ in $L^p(M)$. In addition,  $L^\infty$ convergence yields $\phi_- \le \varphi \le \phi_+$, meaning that $(\varphi, Z) \in \mathcal{U }$. Then $(\phi, Y) = \mathcal{F}_a(\varphi, Z) \in \mathcal{U}$ by Claim 2. Applying \eqref{pseudoLipschiptz1} and \eqref{pseudoLipschiptz2} with $\varphi_1 = \varphi_{f(k)}$, $\varphi_2 = \varphi$, $Z_1 = Z_{f(k)}$, $Z_2 = Z$ for $k \in \mathbb{N}$ yields:
$$\begin{aligned}
\| \phi_{f(k)} - \phi \|_{W^{2,p}(M)}  + \| Y_{f(k)} - Y\|_{W^{2,p}(M)} &\le C\big( \sum_\alpha \sum_I \| a^\alpha_I(Z_{f(k)}) - a^\alpha_I(Z)\|_{L^p(M)}  + \| \varphi_{f(k)}  -\varphi \|_{L^\infty(M)} \big),
\end{aligned}$$
which shows that the sequence $\{\phi_{f(k)}, Y_{f(k)}\}_{k=0}^\infty $  converges in $W^{2,p}(M)$ towards $\mathcal{F}_a(\varphi, Z) \in \mathcal{U}$. 

\medskip
One can then apply Theorem \ref{fixpointschauder} to show that $\mathcal{F}_a$ has a fixed point in $ \mathcal{U} \subset W^{2,p}(M)$ and thus that \eqref{300320201529} has a $W^{2,p}(M)$ solution with $\phi\ge \phi_->0$.
\end{proof}

\section{Einstein-type systems on a complete manifold}\label{SectionEinsteinSyst}

In this section, we analyse existence results for an Einstein-type system on a complete manifold $(M^n, \gamma)$. Since this analysis will be sensitive to  the types of coupling and non-linearities in the system, we will work with \eqref{EMSystem-CompleteManifolds}, recalled here:
\begin{equation}
\label{systoncomplphiX}
\left\{
\begin{aligned}
&a_n\Delta_\gamma \phi -  R_\gamma \phi + \left\vert \tilde K(X) \right\vert^2_\gamma \phi^{-\frac{3n-2}{n-2} } -  \frac{n-1}{n} \tau^2 \phi^{\frac{n+2}{n-2} }+ 
2 \epsilon_1  \phi^{\frac{n+2}{n-2}} + 2  \epsilon_2 \phi^{-3} + 2  \epsilon_3 \phi^{\frac{n-6}{n-2} } =0, \\
& \Delta_{\gamma, \mathrm{conf}} X - \frac{n-1}{n} \nabla \tau \phi^{\frac{2n}{n-2} } - \omega_1\phi^{2\frac{n + 1}{n-2}} + \omega_2  =0,
\end{aligned}
\right.
\end{equation}
where given $p>n$ we assume, 
\begin{equation} \label{hypcomplLp}
\gamma \in C^{\infty}(M), \, R_\gamma, \epsilon_1, \epsilon_2, \epsilon_3,\vert U \vert^2,  \tau^2 \in L^p_{\mathrm{loc}}(M) \text{ and } \omega_1, \omega_2, \nabla \tau \in L^{2}(M)\cap L^p_{\mathrm{loc}}(M). 
\end{equation}

\begin{definition}\label{GlobalBarriersDefn}
We say that the system \eqref{systoncomplphiX} on a complete manifold $(M, \gamma)$ admits \emph{global barrier functions} on $M$ if there exists $m \in \mathbb{R}$ and $0<\phi_- \le \phi_+ \le m < + \infty$ such that:
$$\begin{aligned} &\forall \phi_- \le \varphi \le \phi_+, \quad \forall Y \in L^2(M)\cap W^{2,p}_{\mathrm{loc}} (M) \text{ s.t. } \Delta_{\gamma, \mathrm{conf} } Y - \frac{n-1}{n} \nabla \tau \varphi^{\frac{2n}{n-2}}  - \omega_1\varphi^{2\frac{n + 1}{n-2}} + \omega_2  =0 \text{ then } \\
&a_n \Delta_\gamma \phi_- \ge   R_\gamma \phi_- - \left\vert \tilde K(Y) \right\vert^2_\gamma \phi_-^{-\frac{3n-2}{n-2} } +  \frac{n-1}{n} \tau^2 \phi_-^{\frac{n+2}{n-2} }- 
2  \epsilon_1  \phi^{\frac{n+2}{n-2}}_- - 2 \epsilon_2 \phi_-^{-3} - 2  \epsilon_3 \phi_-^{\frac{n-6}{n-2} }   \text{ in } M \\
&a_n \Delta_\gamma \phi_+ \le  R_\gamma \phi_+ - \left\vert \tilde K(Y) \right\vert^2_\gamma \phi_+^{-\frac{3n-2}{n-2} } +  \frac{n-1}{n} \tau^2 \phi_+^{\frac{n+2}{n-2} }-
2  \epsilon_1  \phi^{\frac{n+2}{n-2}}_- - 2 \epsilon_2 \phi_+^{-3} - 2  \epsilon_3 \phi_+^{\frac{n-6}{n-2} }   \text{ in } M
\end{aligned}$$
\end{definition}

Below, we will rely on an estimate of the first eigenvalue for the CKL in order to produce solutions to the momentum constraint on $M$. To that end, we introduce the following definition:

\begin{definition}
Let $(M^n, \gamma)$  be a complete manifold. The first eigenvalue of the conformal Killing Laplacian is defined as:
$$\lambda_{1, \gamma, \mathrm{conf} } \doteq \inf_{u \in C^{\infty}_c(M, TM)\backslash \{ 0 \} }  \frac{ \int_M  \left\vert \pounds_{\gamma, \mathrm{conf} } u \right\vert^2 dV_\gamma }{ \int_M \vert u\vert^2 dV_\gamma }.$$
\end{definition}

\begin{lem}\label{Xagainstf}
Assume that on a complete manifold $(M^n, \gamma)$ the first eigenvalue of the conformal Killing Laplacian satisfies $$\lambda_{1, \gamma, \mathrm{conf} }>0.$$  Then for  any $f\in L^p_{\mathrm{loc}}(M) \cap L^2(M)$, there exists an $ L^2(M) \cap W^{2,p}_{\mathrm{loc}}(M)$ solution to the equation 
\begin{equation}\label{deltagammaconff} \Delta_{\gamma, \mathrm{conf}} X = f \end{equation} 
satisfying, for any pair of compact sets with smooth boundary, $K \subset \subset K' \subset \subset  M$:
\begin{equation}\label{XSol-UniformEstimatef}
\begin{aligned}
\left\| X \right\|_{W^{2,p} (K) } &\le \frac{C(n, \gamma, K, K' )}{\lambda_{1, \gamma, \mathrm{conf}} }\left( \left\| f \right\|_{L^2 (M)}  + \| f \|_{L^{p}(K')} \right)
\end{aligned}
\end{equation}
and 
\begin{equation}\label{XSol-UniformEstimatefL2}
\begin{aligned}
\left\| X \right\|_{L^2(M) } &\le \frac{2}{\lambda_{1, \gamma, \mathrm{conf}} } \left\| f \right\|_{L^2 (M)}. 
\end{aligned}
\end{equation}
\end{lem}

\begin{proof}
\underline{\textbf{Step 1: } Existence of  a compact exhaustion }\newline
Let us fix  $(\Omega_k)$ a compact exhaustion of $M$ with smooth boundaries, and compact subdivisions  $(U_k)$ and $(V_k)$ such that:
$$\Omega_{k-1} \subset \subset U_k \subset \subset V_k \subset \subset \Omega_k.$$

Such a compact exhaustion follows from the metric completeness  of the manifold and a classical regularization procedure to ensure the smoothness of the boundary (see \cite[Remark 3.2]{AlbanseRigoli.2}).

\bigskip
\underline{\textbf{Step 2: } Solution and estimate on $\Omega_k$ }\newline
Fix $k \in \mathbb{N}$. Theorem \ref{CKLIsoThm} ensures the operator $(\Delta_{\gamma, \mathrm{conf}}, B_0) :\, W^{2,p}(\Omega_k) \rightarrow L^p(\Omega_k) \times W^{2-\frac{1}{p}, p}(\partial \Omega_k)$ is an isomorphism, and thus that there exists $X_k \in W^{2,p}(\Omega_k)$ such that
$$
\left\{
\begin{aligned}
 \Delta_{\gamma, \mathrm{conf}}X_k &= f \text{ in } \Omega_k \\
 X_k &= 0 \text{ on } \partial \Omega_k.
\end{aligned}
\right.
$$

Using interior elliptic estimates on $U_s \subset V_s$ for $s<k$ then yields:
\begin{equation}
\label{050720211559}
\| X_k \|_{W^{2,2}(U_s)}  \le C(U_s, V_s, \gamma) \left( \left\| f \right\|_{L^2 (V_s)} + \| X_k \|_{L^2(V_s)} \right).
\end{equation}

In addition, given the boundary condition $X_k = 0$, we can estimate:
\begin{equation} \label{050720211549}
\begin{aligned}
\left\| \pounds_{\gamma, \mathrm{conf} } X_k \right\|_{L^2(\Omega_k)}^2 &= -2 \int_{ \Omega_k}  \left\langle X_k , \Delta_{\gamma, \mathrm{conf}} X_k \right\rangle d V_\gamma \\
&=  2 \left\vert \int_{ \Omega_k}  \left\langle X_k , f \right\rangle d V_\gamma \right\vert \\
&\le 2 \|X_k \|_{L^2(\Omega_k)} \left\| f \right\|_{L^2(\Omega_k)}.
\end{aligned}
\end{equation}
Further, since  $X_k \in W^{1,2}_0(\Omega_k)$ we can approach $X_k$ in the $W^{1,2}$ topology by compactly supported smooth vector fields and deduce, if $X_k \neq 0$:
$$\lambda_{1, \gamma, \mathrm{conf} } \le \frac{ \left\| \pounds_{\gamma, \mathrm{conf} } X_k \right\|_{L^2(\Omega_k)}^2}{\left\|  X_k \right\|_{L^2(\Omega_k)}^2 }.$$ 
Using the hypothesis $\lambda_{1, \gamma, \mathrm{conf} }>0$, we rephrase the previous inequality as:
\begin{equation}
\label{050720211551}
\left\|  X_k \right\|_{L^2(\Omega_k)}^2 \le \frac{1}{ \lambda_{1, \gamma, \mathrm{conf} } }  \left\| \pounds_{\gamma, \mathrm{conf} } X_k \right\|_{L^2(\Omega_k)}^2.
\end{equation}
In addition \eqref{050720211551} stands if $X_k=0$ and is thus true for all $X_k$.

Injecting  \eqref{050720211549} into  \eqref{050720211551} then yields:
$$\left\|  X_k \right\|_{L^2(\Omega_k)}^2 \le \frac{2}{ \lambda_{1, \gamma, \mathrm{conf} } }   \|X_k \|_{L^2(\Omega_k)} \left\| f \right\|_{L^2(\Omega_k)},$$
which ensures the uniform estimate on $V_s$ for $s<k$:
\begin{equation}
\label{050720211554}
\| X_k \|_{L^2(V_s) } \le \| X_k \|_{L^2(\Omega_k)} \le  \frac{2}{ \lambda_{1, \gamma, \mathrm{conf} } }    \left\| f \right\|_{L^2(M)}.
\end{equation}
Inserting \eqref{050720211554} into \eqref{050720211559} then yields:
\begin{equation}
\label{050720211611}
\begin{aligned}
\| X_k \|_{W^{2,2}(U_s)}  &\le  \frac{C(n, U_s, V_s, \gamma)}{ \lambda_{1, \gamma, \mathrm{conf} } }  \left( \left\| f \right\|_{L^2(V_s)} +    \left\| f \right\|_{L^2(M)} \right) \\& \le \frac{C(n, U_s, V_s, \gamma)}{ \lambda_{1, \gamma, \mathrm{conf} } }  \left( \left\| f \right\|_{L^p(V_s)} +    \left\| f \right\|_{L^2(M)} \right)
\end{aligned}
\end{equation}
since $p\ge 2$ and $V_s$ is compact.

\bigskip
\underline{\textbf{Step 3: } Bootstrap from \eqref{050720211611} }\newline
We intend to bootstrap \eqref{050720211611} to obtain, for any $s<k$:
$$\begin{aligned}
\left\| X_k \right\|_{W^{2,p} (\Omega_{s-1}) } &\le \frac{C(n, \gamma, \Omega_{s-1}, \Omega_s)}{\lambda_{1, \gamma, \mathrm{conf}} }   \left[   \|f\|_{L^p(\Omega_s)} + \|f \|_{L^2(M)}  \right].
\end{aligned}$$
Let us denote $\{p_j\}_{j=0}^\infty $ the sequence defined by induction as:
$p_0=2, \, \frac{1}{p_{j+1}} = \frac{1}{p_j} - \frac{2}{n}$, and let $j_{\mathrm{max}}$ be the first integer for which $p_{j_{\mathrm{max}}} = \min \left(p, \frac{n p_{j_{\mathrm{max}}-1}}{n-2 p_{j_{\mathrm{max}}-1}} \right) \ge \frac{n}{2}$ if $n>3$, and $0$ if $n=3$.  In the latter case, since $\Omega_{s-1} \subset U_s$ \eqref{050720211611} is already the desired estimate, and else if $j_{\mathrm{max}} >0$,  consider  compacts  $ \Omega_{s-1}  \subset \subset U^{j_{\mathrm{max}}-1}_s \subset \subset \dots \subset \subset U_s =U^0_s$. We will show by induction that for all $i \le j_{\mathrm{max}}$, one has
\begin{equation} \label{050720211612} \begin{aligned} \left\| X_k \right\|_{W^{2,p_i} (U^i_s) } &\le \frac{C(n, \gamma, U^1_s, \dots U^i_s, V_s )}{\lambda_{1, \gamma, \mathrm{conf}} } \left[   \|f\|_{L^p(V_s)} + \|f\|_{L^2(M)} \right]. \end{aligned} \end{equation}
Inequality \eqref{050720211611} shows that \eqref{050720211612} stands for $i=0$. If we assume that it stands for $i< j_{\mathrm{max}}$, then applying Sobolev estimates on the \emph{compact} set $U^i_s$ yields:
\begin{equation}
\label{050720211626}
\begin{aligned}
\left\| X_k \right\|_{L^{p_{i+1}}( U^i_s)} &\le C(U^i_s, n) \left\| X_k \right\|_{W^{2,p_i} (U^i_s) }  \\
&\le \frac{C(n, \gamma, U^1_s, \dots U^i_s, V_s )}{\lambda_{1, \gamma, \mathrm{conf}} }  \left[   \|f\|_{L^p(V_s)} + \|f \|_{L^2(M)}  \right],
\end{aligned}
\end{equation}
while $L^p$ interior estimates yield:
\begin{equation}
\label{050720211626bis}
\begin{aligned}
\left\| X_k \right\|_{W^{2,p_{i+1}} (U^{i+1}_s) }& \le  {C(n, \gamma, U^1_s, \dots U^{i+1}_s )}\left[ \left\|f \right\|_{L^{p_{i+1} }(U^i_s)} + \left\| X_k\right\|_{L^{p_{i+1}} (U^i_s) }  \right] \\
&\le  {C(n, \gamma, U^1_s, \dots U^{i+1}_s )}  \left[   \|f\|_{L^p(V_s)} + \|f \|_{L^2(M)}  \right],
\end{aligned}
\end{equation}
which yields \eqref{050720211612} for $i+1$ and proves the result for $p_{j_{\mathrm{max}}}$ when $j_{\mathrm{max}}>0$, and since the estimate derives naturally from \eqref{050720211611} when $j_{\mathrm{max}}=0$, it stands true in all cases. Once \eqref{050720211612} is known for $j_{\mathrm{max}}$,  Sobolev embeddings ensure that $X_k \in  L^p (U^{j_{\mathrm{max}}}_s)$, and interior estimates again show that $X_k$ satisfies:
\begin{equation}
\begin{aligned}
\left\| X_k \right\|_{W^{2,p} (\Omega_{s-1}) } &\le \frac{C(n, \gamma, \Omega_{s-1}, U^1_s, \dots, U^{j_{\mathrm{max}}}_s )}{\lambda_{1, \gamma, \mathrm{conf}} }   \left[   \|f\|_{L^p(V_s)} + \|f \|_{L^2(M)}    \right].
\end{aligned}
\end{equation}
Since we chose the sequence of $\left( U^{j}_s\right)$, we can fix them once and for all for any $\Omega_{s-1}$ and deduce:
\begin{equation}
\label{050720211639}
\begin{aligned}
\left\| X_k \right\|_{W^{2,p} (\Omega_{s-1}) } &\le \frac{C(n, \gamma, \Omega_{s-1})}{\lambda_{1, \gamma, \mathrm{conf}} }   \left[   \|f\|_{L^p(\Omega_s)} + \|f \|_{L^2(M)}  \right].
\end{aligned}
\end{equation}
Notably, estimate \eqref{050720211639} does not depend on $k>s$.

\bigskip
\underline{\textbf{Step 4: } Diagonal extraction}\newline
Thanks to Steps 1,2 and 3, we can produce a sequence of solutions $\{X_k\}_{k=1}^{\infty}$ on the compact exhaustion $\{\Omega_k\}_{k=1}^{\infty}$ of $M$, uniformly bounded in $W^{2,p}$ on interior compacts. We can then extract from $\{X_k\}_{k=1}^{\infty}$ a subsequence $\{X_{k_l(\Omega_1)} \}_{l =1}^\infty$ which converges weakly in $W^{2,p}(\Omega_1)$ to some $\bar{X}_1\in W^{2,p}(\Omega_1)$ and strongly in $W^{1,p}(\Omega_1)$ to some $\tilde{X}_1\in W^{1,p}(\Omega_1)$. Using that strong convergence implies weak convergence, one deduces that $\tilde{X}_1=\bar{X}_1 \in W^{2,p}(\Omega_1)$ and,
integrating by parts  on $\Omega_1$, we find
\begin{align}
\label{weakequationafterdiagonalextraction1}
-\int_{\Omega_1}\langle \pounds_{\gamma, \mathrm{conf}}X_{k_l(\Omega_1)},\nabla v\rangle_{\gamma}dV_{\gamma}=\int_{\Omega_1}\langle f,v\rangle_{\gamma}dV_{\gamma} \:\:\:\: \forall\: v\in C^{\infty}_0(\Omega_1).
\end{align}
By strong $W^{1,p}(\Omega_1)$-convergence of the sequence to $\bar{X}_1$, the left-hand side above passes to the limit and then integrating by parts this limit, we find
\begin{align}
\label{weakequationafterdiagonalextraction2}
\int_M\langle \Delta_{\gamma, \mathrm{conf}}\bar{X}_1-f,v\rangle_{\gamma}dV_{\gamma}=0 \: \forall\: v\in C^{\infty}_0(\Omega_1),
\end{align}
implying that $\Delta_{\gamma,\mathrm{conf}}\bar{X}_1-f=0$ in $\Omega_1$, which means that $\bar{X}_1$ satisfies \eqref{deltagammaconff} on $\Omega_1$. In addition $\{X_{k_l(\Omega_1)}\}_{l=1}^{\infty}$ satisfies \eqref{050720211639} for $s<k_l(\Omega_1)$ and thus, by a similar reason to the one given above, we then extract a subsequence $\{X_{k_l(\Omega_2)}\}_{l=1}^{\infty}$ which converges toward $\bar{X}_2$ solving \eqref{deltagammaconff} on $\Omega_2$. Consequently, $\bar{X}_2{\vert_{\Omega_1}}= \bar{X}_1$. Similarly, we build $\{X_{k_l(\Omega_t)}\}_{l=1}^{\infty}$ converging toward $\bar{X}_t$, solving \eqref{deltagammaconff} on $\Omega_t$, and extending the previous solutions. Thus, given $y\in M$ such that $y\in \Omega_t$ for some $t\in \mathbb{N}$, setting $X(y)\doteq \bar{X}_l(y)$ for any $l\geq t$ gives a well-defined $W^{2,p}_{loc}(M)$ function which satisfies \eqref{050720211639} on $M$, and such that the diagonal extraction $X_{k_l(\Omega_l)} \rightharpoonup X$ in $W^{2,p}(\Omega_i)$ for any $i$. Compactness of Sobolev embeddings then ensures that the convergence is strong in $W^{1,p}_{\mathrm{loc}}(M)$ and thus that  $\left\{ X_{k_l(\Omega_l)} \right\}_{l=1}^\infty$ is Cauchy in $W^{1,p}(K)$ for any compact with smooth boundary $K\subset\subset M$.

Moreover, since the right-hand side of the equation does not depend on $X$, for any integers $l$, $s$ and any compacts with smooth boundaries $K \subset \subset K' \subset \subset \Omega_l \cap \Omega_s$, $\Delta_{\gamma, \mathrm{conf}} \left( X_{k_l(\Omega_l)} - X_{k_s(\Omega_s)} \right)=0$ on $K$,  and $L^p$ interior estimates ensure
$$\|X_{k_l(\Omega_l)} - X_{k_s(\Omega_s)} \|_{W^{2,p}(K)} \le C(K, K') \|X_{k_l(\Omega_l)} - X_{k_s(\Omega_s)} \|_{W^{1,p}(K')}.$$
Thus the $X_{k_l(\Omega_l)}$ are $W^{2,p}$-Cauchy on compacts.  One may then pass to the limit in \eqref{050720211639} obtaining the uniform estimate \eqref{XSol-UniformEstimatef} on the $\{\Omega_s\}$. In addition, thanks to \eqref{050720211554} one can check  that for any fixed $s$ 
$$\left\|  X_{k_l(\Omega_s)} \right\|_{L^2(\Omega_s)} \le \frac{2}{ \lambda_{1, \gamma, \mathrm{conf} } }    \left\| f \right\|_{L^2(M)}. $$
Taking this inequality to the limit in $l$ ensures, thanks to the strong $L^2_{\mathrm{loc}}$ convergence, that:
$$\left\|  {X} \right\|_{L^2(\Omega_s)} \le \frac{2}{ \lambda_{1, \gamma, \mathrm{conf} } }    \left\| f \right\|_{L^2(M)}. $$
 Taking the limit as $\Omega_s \nearrow M$ ensures the  limit $X \in L^2(M)$ with \eqref{XSol-UniformEstimatefL2}. 

To obtain  \eqref{XSol-UniformEstimatef} on arbitrary compacts $K\subset \subset K'$ one can use interior $L^2$ estimates and reproduce the bootstrap of Step 3 to go from \eqref{050720211611} to the proper estimate. 
\end{proof}

We will now prove a lemma inspired by \cite{GunPig}, which will  grant injectivity and continuity of the solution map associated with (\ref{deltagammaconff}).
 
\begin{lem}\label{Pigola}
Let $(M^n,\gamma)$ be a complete smooth Riemannian manifold. Then, there is a constant $C=C(n)$ such that the following estimate holds for all $X\in W_{ \mathrm{loc} }^{2,2}(M) \cap L^2 (M)$ such that $\Delta_{\gamma,\mathrm{conf}}X\in L^2(M)$:
\begin{align}\label{Pigola.1}
\Vert \pounds_{\gamma, \mathrm{conf}}X\Vert^2_{L^2(M)}\leq C\Vert \langle X,\Delta_{\gamma, \mathrm{conf}}X \rangle_{\gamma}\Vert_{L^1(M)}, 
\end{align}
\end{lem}
\begin{proof}
Consider  $X\in W_{ \mathrm{loc} }^{2,2}(M) \cap L^2 (M)$ and a function $\varphi\in C^{\infty}_0(M)$, $\varphi\geq 0$, and notice that, in a distributional sense:
\begin{align*}
\mathrm{div}_{\gamma}(\varphi^2\pounds_{\gamma,\mathrm{conf}}X(X,\cdot))
&=2\varphi \pounds_{\gamma,\mathrm{conf}}X(X,\nabla \varphi)+ \varphi^2\langle \Delta_{\gamma, \mathrm{conf}}X, X\rangle_{\gamma} + \frac{1}{2}\varphi^2 \vert \pounds_{\gamma,\mathrm{conf}}X\vert^2_{\gamma}.
\end{align*}
Integrating the above equation we find
\begin{align*}
\frac{1}{2}\int_{M}\varphi^2 \vert \pounds_{\gamma,\mathrm{conf}}X\vert^2_{\gamma}dV_{\gamma}=-\int_M2\varphi \pounds_{\gamma,\mathrm{conf}}X(X,\nabla \varphi)dV_{\gamma} - \int_M\varphi^2\langle \Delta_{\gamma, \mathrm{conf}}X, X\rangle_{\gamma}dV_{\gamma}.
\end{align*}
Now, apply the pointwise estimates almost everywhere $\pounds_{\gamma,\mathrm{conf}}X(X,\nabla \varphi)\leq C(n) \vert\pounds_{\gamma,\mathrm{conf}}X\vert_{\gamma}\vert X\vert_{\gamma} \vert \nabla \varphi\vert_{\gamma}$, and then, given $\epsilon>0$, apply Young's inequality $ab\leq \frac{\epsilon a^2}{2} + \frac{b^2}{2\epsilon}$ to get
\begin{align*}
\int_M\varphi\vert \pounds_{\gamma,\mathrm{conf}}X\vert_{\gamma}\vert X \vert_{\gamma} \vert \nabla \varphi\vert_{\gamma}dV_{\gamma}\leq \frac{\epsilon}{2}\int_M\varphi^2\vert\pounds_{\gamma,\mathrm{conf}}X\vert^2_{\gamma}dV_{\gamma} + \frac{1}{2\epsilon}\int_M\vert X \vert^2_{\gamma} \vert \nabla \varphi \vert^2_{\gamma}dV_{\gamma}.
\end{align*}
Therefore, we find
\begin{align*}
\begin{split}
\frac{1}{2}\int_{M}\varphi^2 \vert \pounds_{\gamma,\mathrm{conf}}X\vert ^2_{\gamma}dV_{\gamma}&\leq C(n)\frac{\epsilon}{2}\int_M\varphi^2\vert\pounds_{\gamma,\mathrm{conf}}X\vert^2_{\gamma}dV_{\gamma} + \frac{C(n)}{2\epsilon}\int_M\vert X \vert^2_{\gamma} \vert \nabla \varphi\vert^2_{\gamma}dV_{\gamma} \\
&+ \int_M\varphi^2\vert \langle \Delta_{\gamma,\mathrm{conf}}X, X\rangle_{\gamma}\vert dV_{\gamma}.
\end{split}
\end{align*}
Picking $\epsilon$ sufficiently small, we can absorb the first term in the right-hand side into the left-hand side, so as to find a fixed constant $C>0$ such that 
\begin{align}
\int_{M}\varphi^2 \vert \pounds_{\gamma,\mathrm{conf}}X \vert^2_{\gamma}dV_{\gamma}&\leq C\left( \int_M \vert X \vert^2_{\gamma} \vert \nabla \varphi\vert^2_{\gamma}dV_{\gamma} + \int_M\varphi^2\vert \langle \Delta_{\gamma, \mathrm{conf}}X, X\rangle_{\gamma}\vert dV_{\gamma}\right).
\end{align}
Now, using the above inequality along a sequence of first order cut-off functions $\{\varphi_j\}_{j=1}^{\infty}$ (which exists since $M$ is complete, see  B. Güneysu's \cite[Theorem 2.2]{Gun} or  M. Shubin's \cite[Proposition 4.1]{SHUBIN200192}), using monotone and dominated convergence, one finds the desired estimate.
\end{proof}

From this, we deduce the following corollary:
\begin{cor}\label{applicationLipschitz}
If $(M^n, \gamma)$ is smooth complete, satisfies $\lambda_{1,\gamma, \mathrm{conf}} >0$ and admits no  non-trivial  global CKF, then for any $f \in L^p_{\mathrm{loc}}(M) \cap L^2(M)$ there exists a unique $X_f \in L^2(M) \cap W^{2,p}_{\mathrm{loc}}(M)$ satisfying \eqref{deltagammaconff}-\eqref{XSol-UniformEstimatefL2}. 
In addition, the linear map $\mathcal{P}:\, f\in L^p_{\mathrm{loc}}(M) \cap L^2(M) \rightarrow X_f \in W^{2,p}_{\mathrm{loc}}(M) \cap L^2(M)$ satisfies, for all $f_1$, $f_2$:

\begin{equation}
\label{L2LipschitzXf}
\| X_{f_1} - X_{f_2} \|_{L^2(M)} \le \frac{2}{\lambda_{1,\gamma,\mathrm{conf}}} \|f_1-f_2\|_{L^2(M)},
\end{equation}
and  for all $K\subset \subset K' \subset \subset M$ with smooth boundary
\begin{equation}
\label{LpLipschitzXf}
\| X_{f_1} - X_{f_2} \|_{W^{2,p}(K)} \le \frac{C(n, \gamma, K, K')}{\lambda_{1,\gamma,\mathrm{conf}}} \left( \| f_1-f_2\|_{L^p(K')}+   \|f_1-f_2\|_{L^2(M)} \right).
\end{equation}
\end{cor}
\begin{proof}
Lemma \ref{Xagainstf} ensures that for any $f \in L^p_{\mathrm{loc}}(M) \cap L^2(M)$ there exists a $ L^2(M) \cap W^{2,p}_{\mathrm{loc}}(M)$ solution of \eqref{deltagammaconff} satisfying \eqref{XSol-UniformEstimatef} and \eqref{XSol-UniformEstimatefL2}. In addition if $X$ and $Y$ are two such solutions, $X-Y \in L^2(M) \cap W^{2,p}_{\mathrm{loc}}(M) \subset L^2(M) \cap W^{2,2}_{\mathrm{loc}}(M)$ and satisfies $\Delta_{\gamma, \mathrm{conf}}(X-Y) = 0$. Then \eqref{Pigola.1} ensures: $\|\pounds_{\gamma, \mathrm{conf}} (X-Y) \|^2_{L^2(M)} \le 0$. Since $(M, \gamma)$ admits no non-trivial global CKF, $X- Y = 0$, which shows uniqueness.

The map $\mathcal{P}$ is then well-defined, and can be shown to be linear in a direct manner.  Finally, if $f_1$, $f_2\in L^p_{\mathrm{loc}}(M) \cap L^2(M)$, then $\Delta_{\gamma, \mathrm{conf}} \left( X_{f_1}-X_{f_2} \right) = f_1- f_2$, and thus $X_{f_1}-X_{f_2} = X_{f_1-f_2}$ by uniqueness. Applying \eqref{XSol-UniformEstimatef}-\eqref{XSol-UniformEstimatefL2} to $X_{f_1}-X_{f_2}$ then yields \eqref{L2LipschitzXf}-\eqref{LpLipschitzXf}.
\end{proof}

Let us now ensure that the right-hand side of the second equation of \eqref{systoncomplphiX} depends continuously on the choice of the conformal factor.

\begin{lem}
\label{lemmacontrolrighthandside}
Let $(M^n, \gamma)$  be a  complete smooth Riemannian manifold, $\tau, \omega_1, \omega_2$ satisfying the controls in \eqref{hypcomplLp}  and  $m>0$ a constant.  Then the map $\mathcal{G}: \varphi \in L^\infty(M) \cap \{ 0 <\varphi \le m \} \mapsto \frac{n-1}{n} \nabla \tau \varphi^{\frac{2n}{n-2}} + \omega_1 \varphi^{2\frac{n+1}{n-2}} - \omega_2 \in L^p_{\mathrm{loc}}(M) \cap L^2(M)$ is well defined and for all  $0< \varphi \le m$ satisfies:
\begin{equation}
\label{GboundL2}
\begin{aligned}
\| \mathcal{G} ( \varphi) \|_{L^2(M)} &\le C \left( \left\| \nabla \tau \right\|_{L^2 (M)}   m^{\frac{2n}{n-2} }  + \left\| \omega_1 \right\|_{L^2(M) } m^{2\frac{n+1}{n-2}}    +  \left\| \omega_2 \right\|_{L^2(M) }  \right),
\end{aligned}
\end{equation}
\begin{equation}
\label{GisLipschitzL2}
\| \mathcal{G}(\varphi_1) - \mathcal{G}(\varphi_2) \|_{L^2(M)} \le C\left( m,\| \nabla \tau \|_{L^{2}(M)}, \| \omega_1 \|_{L^{2}(M)}     \right) \| \varphi_1 - \varphi_2\|_{L^\infty(M)}
\end{equation}
and for all compacts $K \subset \subset M$
\begin{equation}
\label{GboundL^p}
\begin{aligned}
\| \mathcal{G} ( \varphi) \|_{L^p(K)} &\le C  \left( \left\| \nabla \tau \right\|_{L^p (K)}   m^{\frac{2n}{n-2} }  + \left\| \omega_1 \right\|_{L^p(K) } m^{2\frac{n+1}{n-2}}    +  \left\| \omega_2 \right\|_{L^p(K) }  \right),
\end{aligned}
\end{equation}
\begin{equation}
\label{GisLipschitzLp}
\| \mathcal{G}(\varphi_1) - \mathcal{G}(\varphi_2) \|_{L^p(K)} \le C\left( m,\| \nabla \tau \|_{L^{p}(K)}, \| \omega_1 \|_{L^{p}(K)}     \right) \| \varphi_1 - \varphi_2\|_{L^\infty(M)}.
\end{equation}
\end{lem}
\begin{proof}
The $L^2$ and $L^p$ controls given by  \eqref{hypcomplLp} combined with the $C^1$ bound of $t\mapsto t^I$ for $I\ge 1$ on $(0 , m]$ ensure that $\mathcal{G}$ satisfies \eqref{GboundL2}-\eqref{GisLipschitzLp}.
\end{proof}

Combining Corollary \ref{applicationLipschitz} and Lemma \ref{lemmacontrolrighthandside} yields:

\begin{cor}\label{050720211748}
Assume that on a complete smooth manifold $(M^n, \gamma)$  the system \eqref{systoncomplphiX} admits  two global barrier functions $\phi_{\pm}$, that $\lambda_{1, \gamma, \mathrm{conf} }>0$, and that $\gamma$ admits no non-trivial global CKFs. Then, for any $\varphi \in L^\infty(M)$ such that $\phi_- \le \varphi \le \phi_+ \le m$ there exists a unique $L^2(M) \cap W^{2,p}_{\mathrm{loc}}(M)$ solution $X_\varphi$ to the equation
\begin{equation}
\label{050720211652}
\Delta_{\gamma, \mathrm{conf}} X_\varphi - \frac{n-1}{n} \nabla \tau \varphi^{\frac{2n}{n-2}}  - \omega_1\varphi^{2\frac{n + 1}{n-2}} + \omega_2  =0.
\end{equation} 
Moreover, for any pair of compact sets with smooth boundary $K \subset \subset K' \subset \subset  M$ one has the following control on $X_\varphi$:
\begin{equation}\label{XSol-UniformEstimate}
\begin{aligned}
\left\| X_\varphi \right\|_{W^{2,p} (K) } &\le \frac{C(n, \gamma, K )}{\lambda_{1, \gamma, \mathrm{conf}} } \left[ \left( \left\| \nabla \tau \right\|_{L^2 (M)}  + \| \nabla \tau \|_{L^{p}(K')} \right) m^{\frac{2n}{n-2} }  \right. \\ & \left. + \left(\left\| \omega_1 \right\|_{L^2(M) } + \| \omega_1 \|_{L^{p}(K')}  \right) m^{2\frac{n+1}{n-2}}    + \left(  \left\| \omega_2 \right\|_{L^2(M) } + \| \omega_2 \|_{L^{p}(K')}  \right) \right]\\
\left\| X_\varphi \right\|_{L^2(M) } &\le \frac{C(n, \gamma )}{\lambda_{1, \gamma, \mathrm{conf}} } \left[  \left\| \nabla \tau \right\|_{L^2 (M)}  m^{\frac{2n}{n-2} }   + \left\| \omega_1 \right\|_{L^2(M) }  m^{2\frac{n+1}{n-2}}    +  \left\| \omega_2 \right\|_{L^2(M) }  \right].
\end{aligned}
\end{equation}
In addition, for any $\phi_- \le \varphi_1, \varphi_2 \le \phi_+ \le m$:
\begin{equation}\label{XSol-UniformEstimateLipschitz}
\begin{aligned}
\left\| X_{\varphi_1} - X_{\varphi_2} \right\|_{W^{2,p} (K) } &\le C\left( m,\| \nabla \tau \|_{L^{p}(K')}, \| \omega_1 \|_{L^{p}(K')}     \right) \| \varphi_1 - \varphi_2\|_{L^\infty(M)}, \\
\left\| X_{\varphi_1} - X_{\varphi_2} \right\|_{L^2(M) } &\le C\left( m,\| \nabla \tau \|_{L^{2}(M)}, \| \omega_1 \|_{L^{2}(M)}     \right) \| \varphi_1 - \varphi_2\|_{L^\infty(M)}.\\
\end{aligned}
\end{equation}
\end{cor}

\begin{remark}
Let us briefly contrast the above analysis concerning solutions to the momentum constraint on a complete manifold with the type of hypotheses given in Section \ref{sectionPDEgen} for the ``momentum''-type equations. While in Section \ref{sectionPDEgen} existence of solutions to the linear problems, and moreover a continuous inverse for the associated linear operator, are consequences of a general theory of regular elliptic boundary value problems under suitable boundary conditions (see, for instance, Appendix \ref{appendixlinearanalysisellipticboundaryvalueproblem}), the parallel existence, uniqueness and continuous dependence of solutions to (\ref{deltagammaconff}) relies on a diagonal extraction scheme which is sensitive to the shape of the equation.
\end{remark}

We will apply Corollary \ref{050720211748} with $ \varphi \in L^\infty(M)$ given as an extension of a $L^\infty$ function on a compact subset, as described in the following lemma:
\begin{lem}
\label{extensionlemma}
Let  $\Omega$ be a compact subset of $(M^n, \gamma)$, a complete manifold with smooth boundary. Assume the system \eqref{systoncomplphiX} admits  two global barrier functions $\phi_{\pm}$. Then, the extension map
$$ \mathcal{H} : \, \left\{  \begin{aligned} L^\infty(\Omega)  &\rightarrow L^\infty(M)\\ 
 \phi & \mapsto  \varphi = \left\{ \begin{aligned}& \phi \text{ on } \Omega \\ & \frac{\phi_+ + \phi_-}{2} \text{ on } M \backslash \Omega \end{aligned} \right. \end{aligned} \right. $$
is Lipschitz, and if $\phi_- \le \phi \le \phi_+ \implies  \phi_- \le \mathcal{H}( \phi) \le \phi_+$.
\end{lem}
\begin{proof}
The last implication is straightforward while if $\phi_1, \phi_2 \in L^\infty(\Omega)$, since $\varphi_1 = \varphi_2 =  \frac{\phi_+ + \phi_-}{2}$ on $M \backslash \Omega$ one has:
\begin{equation}
\label{Lipschitzphivarphi}
\| \varphi_1 - \varphi_2 \|_{L^\infty(M)} \le \|\varphi_1- \varphi_2\|_{L^\infty( \Omega)} + \|\varphi_1- \varphi_2\|_{L^\infty( M \backslash \Omega)} = \|\phi_1- \phi_2\|_{L^\infty( \Omega)}.
\end{equation}
\end{proof}

This preparatory work  enables us obtain a solution of the momentum equation over the whole manifold for which the global barrier inequalities (Definition \ref{GlobalBarriersDefn}) apply. This will allow us to set up a fixed point theorem similar to the one in Theorem \ref{theoexistcompact} when analysing (\ref{systoncomplphiX}) on an arbitrary smooth $\Omega\subset \subset M$. We cannot directly apply Theorem \ref{theoexistcompact} in such a situation since our global barrier hypotheses do not grant that on such $\Omega$ (\ref{invariantcondition})-(\ref{300320201549}) are satisfied. The shift from Definition \ref{strigglobalbarrierscomp} to Definition \ref{GlobalBarriersDefn} in this section is tailored to accommodate flexible asymptotics on a general non-compact manifolds where direct a priori estimates for the momentum constraint may not be available and to impose, whenever possible, only local integrability conditions on coefficients. 

\begin{lem}\label{existenceoncompact}
Assume that on a complete smooth manifold $(M^n, \gamma)$, $n\ge 3$, with no global CKF,  the system \eqref{systoncomplphiX}  satisfies \eqref{hypcomplLp} and admits  two global barrier functions $0< \phi_- \le \phi_+ \le m < \infty$, and that $\lambda_{1, \gamma, \mathrm{conf} }>0.$ Then, for any compact subset with smooth boundary $\Omega$, \eqref{systoncomplphiX} admits a  solution $(\phi,X)\in W^{2,p}(\Omega)$ where, with the same notations as in the previous lemmas,  $X$ is globally defined on $M$ and given by $X = \mathcal{P}\circ \mathcal{G} \circ \mathcal{H} (\phi) \in W^{2,p}_{\mathrm{loc}} (M) \cap L^2 (M)$. 
\end{lem}
\begin{proof}
Let  $\Omega$ be a compact subset of $M^n$ with smooth boundary and $ \mathcal{U}_ 0 = \{ \phi \in W^{2,p}(\Omega) \text{ s.t. } 0 < \phi_- \le \phi \le \phi_+\}.$ Since $\Omega$ is compact, there exists $l>0$ such that $\phi_-\ge l >0$ on $\Omega$.

\vspace{0.25cm}

\underline{\textbf{Step 1: } Solutions to the linear problems and the fixed point map}\newline
Using the notations of Corollary \ref{050720211748} and Lemmas \ref{lemmacontrolrighthandside}-\ref{extensionlemma}, for any $\phi \in \mathcal{U}_0$ there exists  $X_\phi = \mathcal{P}\circ \mathcal{G} \circ \mathcal{H} (\phi)$ a $W^{2,p}_{\mathrm{loc}} (M) \cap L^2 (M)$  solution of the momentum equation \eqref{050720211652}. Considering $\Omega'$ a compact strictly containing $\Omega$ and applying \eqref{XSol-UniformEstimate} yields a uniform $W^{2,p}(\Omega)$ bound on $X_\phi$, since
 notably the right-hand side of the inequality does not depend on $\phi \in \mathcal{U}_0$. In addition, combining \eqref{XSol-UniformEstimateLipschitz} and \eqref{Lipschitzphivarphi} ensures that for all $\phi_1,\phi_2 \in \mathcal{U}_0$,
\begin{equation}
\begin{aligned}
\left\| X_{\phi_1}- X_{\phi_2} \right\|_{W^{2,p} (\Omega) } &\le C\left( m,\| \nabla \tau \|_{L^{p}(\Omega')}, \| \omega_1 \|_{L^{p}(\Omega')}     \right) \| \phi_1 - \phi_2\|_{L^\infty(\Omega)}.
\end{aligned}
\end{equation}

Consequently, since $\tilde K(X_\phi)=\pounds_{\gamma, \mathrm{conf}}X + U $ and Sobolev embeddings applied to \eqref{XSol-UniformEstimate} ensure that $\nabla X_\phi$ is uniformly bounded in $C^1(\Omega)$. 
Thus, along the same lines as in \eqref{300320201647}, we can introduce a large enough \emph{shift} function $a>0$ such that the map, $h_{\phi,a}: \, \mathcal{U}_0 \rightarrow L^p(\Omega)$, $\phi\in \mathcal{U}_0$, defined by
\begin{equation} \label{defdeh}
\begin{aligned}
h_ {\phi,a}(\psi)&\doteq  R_\gamma \psi -  \left\vert \tilde K(X_\phi) \right\vert^2_\gamma \psi^{-\frac{3n-2}{n-2} } +  \frac{n-1}{n} \tau^2 \psi^{\frac{n+2}{n-2} }- 
2 \epsilon_1  \psi^{\frac{n+2}{n-2}} - 2  \epsilon_2 \psi^{-3} - 2  \epsilon_3 \psi^{\frac{n-6}{n-2} }- a \psi.
\end{aligned}
\end{equation}
is a non-increasing function of $\psi$.
Along the lines of Theorem \ref{theoexistcompact}, using the boundedness of $t\mapsto t^I$ on $[l,m]$ and (\ref{XSol-UniformEstimate}), we find a constant $C>0$ depending on $\|R_\gamma\|_{L^p(\Omega)}$ , $\|a\|_{L^p(\Omega)}, \|U\|_{L^p (\Omega)}$,  $\| \epsilon_1  \|_{L^p (\Omega)}$, $\| \epsilon_2 \|_{L^p (\Omega)}$, $\| \epsilon_3  \|_{L^p (\Omega)}$, $\| \tau^2 \|_{L^p (\Omega)}$,$l$, and $m$, such that for all  $\phi, \psi \in \mathcal{U}_0$:
\begin{equation}
\label{uniformcontrolhphiapsi}
\|h_{\phi, a} (\psi)\|_{L^p(\Omega)} \le C.
\end{equation}

In addition, for $\phi_1, \psi_1, \phi_2, \psi_2 \in \mathcal{U}_0$, using that $t \mapsto t^I$ is Lipschitz on $[l,m]$ for any $I$, there exists $C\left(\|R_\gamma\|_{L^p(\Omega)} , \|a\|_{L^p(\Omega)}, \|U\|_{L^p (\Omega)}, \| \epsilon_1  \|_{L^p (\Omega)}, \| \epsilon_2 \|_{L^p (\Omega)}, \| \epsilon_3  \|_{L^p (\Omega)}, \| \tau^2 \|_{L^p (\Omega)},l, m\right)$ such that:
$$\begin{aligned}
\| h_{\phi_1,a}(\psi_1) - h_{\phi_2,a}(\psi_2) \|_{L^p(\Omega)} \le C  \|\psi_1- \psi_2 \|_{L^\infty(\Omega)}  +
\left\| \left\vert \tilde K(X_{\phi_1}) \right\vert^2_\gamma \psi_1^{-\frac{3n-2}{n-2} }- \left\vert \tilde K(X_{\phi_2}) \right\vert^2_\gamma \psi_2^{-\frac{3n-2}{n-2} } \right\|_{L^p(\Omega)}.
\end{aligned}$$

Writing
\begin{equation} \label{controldifferenceofKtilde1}
\begin{aligned}
\left\| \left\vert \tilde K(X_{\phi_1}) \right\vert^2_\gamma \psi_1^{-\frac{3n-2}{n-2} }- \left\vert \tilde K(X_{\phi_2}) \right\vert^2_\gamma \psi_2^{-\frac{3n-2}{n-2} } \right\|_{L^p(\Omega)} &\le C l^{-\frac{3n-2}{n-2}}\left\| \left\vert \tilde K(X_{\phi_1}) \right\vert^2_\gamma - \left\vert \tilde K(X_{\phi_2}) \right\vert^2_\gamma  \right\|_{L^{p}(\Omega)} \\&  + C l^{- \frac{3n-2}{n-2} -1} \| \psi_1- \psi_2\|_{L^\infty(\Omega)}\\
&\le C' l^{-\frac{3n-2}{n-2}} (\|X_{\phi_1} - X_{\phi_2} \|_{W^{2,p}(\Omega)}  + l^{ -1} \| \psi_1- \psi_2\|_{L^\infty(\Omega)}),
\end{aligned}\end{equation}
by \eqref{XSol-UniformEstimate} and the Lipschitz nature of $t \mapsto t^{-\frac{3n-2}{n-2}}$, while controlling the $\tilde K$ terms in the following manner:
$$\begin{aligned}
\left\| \left\vert \tilde K(X_{\phi_1}) \right\vert^2_\gamma - \left\vert \tilde K(X_{\phi_2}) \right\vert^2_\gamma  \right\|_{L^{p}(\Omega)} &\le \left\| \tilde K(X_{\phi_1})  - \tilde K(X_{\phi_2}) \right\|_{L^{\infty}(\Omega)} \left\| \tilde K(X_{\phi_1})  + \tilde K(X_{\phi_2}) \right\|_{L^{p}(\Omega)} \\
&\le C(\Omega, \|U\|_{L^p(\Omega)}, \|X_{\phi_1}\|_{W^{1,p}(\Omega)}, \|X_{\phi_2}\|_{W^{1,p}(\Omega)} ) \|X_{\phi_1} - X_{\phi_2}\|_{W^{2,p}(\Omega)}.
\end{aligned}$$

 Applying \eqref{XSol-UniformEstimateLipschitz} and injecting this in \eqref{controldifferenceofKtilde1} ensures that there exists $C$ depending on $L^p(\Omega') \cap L^2(M)$ norms on the data and $l$, $m$ such that:
\begin{equation}
\label{haphiLipschitz}
\| h_{\phi_1,a}(\psi_1) - h_{\phi_2,a}(\psi_2) \|_{L^p(\Omega)} \le C \left( \|\phi_1- \phi_2\|_{L^\infty(\Omega)} + \| \psi_1 - \psi_2 \|_{L^\infty(\Omega)}\right).
\end{equation}

Let us now define the solution map associated with the Schrödinger operator $(a_n\Delta_{\gamma}-a,B_0)$ (see Theorem \ref{ShiftedRoughLapBelIso}):
$$\mathcal{F} :\, \left\{ \begin{aligned}
&\mathcal{U}_0 \rightarrow W^{2,p}(\Omega) \\
& \phi \mapsto \psi = (a_n\Delta_\gamma - a , B_0)^{-1}\left(h_{\phi,a}(\phi), \frac{\phi_-+\phi_+}{2} \right),
\end{aligned}\right.$$

Given $\phi \in \mathcal{U}_0$ and $\psi = \mathcal{F}(\phi)$, since $X_\phi$ solves the momentum equation for $\phi_-\le \phi \le \phi_+$, then 
$a_n\Delta_{\gamma} \phi_- - a \phi_- \ge h_{\phi,a}(\phi_-)$ on $\Omega$. Similarly to our analysis in Theorem \ref{theoexistcompact}, since $\phi\geq \phi_{-}$, our choice of shift function ``$a$''  and boundary data for the solution map $\mathcal{F}$ grant that $\phi_{-}\leq \psi\leq \phi_{+}$ and therefore that $\mathcal{F}:\mathcal{U}_0\to \mathcal{U}_0$ is well-defined, and moreover there is a constant $C(\Omega, \gamma, a)$ such that:
$$\begin{aligned}\| \mathcal{F}(\phi) \|_{W^{2,p}(\Omega)} 
&\le C \left( \|h_{\phi,a} (\phi)\|_{L^p(\Omega)} +\left\| \phi_+ \right\|_{W^{2,p}(\Omega)} + \left\| \phi_- \right\|_{W^{2,p}(\Omega)}    \right), \end{aligned}$$
which combined with \eqref{uniformcontrolhphiapsi} grants that \begin{equation}
\label{uniformboundmathcalF}
\|\mathcal{F}(\phi) \|_{W^{2,p}(\Omega)} \le C_0.
\end{equation}
for a constant $C_0$, which depends on the  $L^p(\Omega') \cap L^2(M)$ norms on the data, $l$, $m$, $\left\| \phi_+ \right\|_{W^{2,p}(\Omega)}$, $\left\| \phi_- \right\|_{W^{2,p}(\Omega)}$, but not on $\phi$.

Finally, if $\phi_1$, $\phi_2 \in \mathcal{U}_0$, since 
$$
\left\{\begin{aligned}
&a_n \Delta_\gamma \left( \mathcal{F}(\phi_1) - \mathcal{F}(\phi_2) \right) - a \left( \mathcal{F}(\phi_1) - \mathcal{F}(\phi_2) \right) = h_{\phi_1,a}(\phi_1) - h_{\phi_2,a}(\phi_2)   \text{ in } \Omega, \\
& \left( \mathcal{F}(\phi_1) - \mathcal{F}(\phi_2) \right)= 0  \text{ on } \partial \Omega,
\end{aligned}\right.$$
the invertibility of the Schrödinger operator  and \eqref{haphiLipschitz} for $\psi_1= \phi_1$ and $\psi_2= \phi_2$ ensure that there exists a constant $C$ depending on  $L^p(\Omega') \cap L^2(M)$ norms on the data, $\gamma$, $\Omega$ and $l$, $m$ such that:

\begin{equation}
\label{mathcalFLipschitzfinally}
\|\mathcal{F}(\phi_1) - \mathcal{F}(\phi_2)  \|_{W^{2,p}(\Omega)} \le C \|\phi_1-\phi_2\|_{L^\infty(\Omega)}.
\end{equation}

\vspace{0.25cm} 

\underline{\textbf{Step 2: } Fixed point theorem}\newline

Similarly to the last part of Theorem \ref{theoexistcompact}, we now consider $\mathcal{F}$ on $\mathcal{U}= \mathcal{U}_0 \cap \overline{B}_{ \tilde C_0}$, where $\tilde C_0 \doteq C_0+ \|\phi_- \|_{W^{2,p}(\Omega)} $ and $C_0$ is the constant in \eqref{uniformboundmathcalF}. This set is again closed and convex in $W^{2,p}$, and $\mathcal{F}:\mathcal{U}\to \mathcal{U}$ by the invariance of $\mathcal{F}$ over $\mathcal{U}_0$ just shown above, together with (\ref{uniformboundmathcalF}). Moreover, (\ref{mathcalFLipschitzfinally}) shows this map is Lipschitz. Finally, (sequential) precompactness is a direct consequence of (\ref{mathcalFLipschitzfinally}) which substitutes the appeal to (\ref{pseudoLipschiptz1})-(\ref{pseudoLipschiptz2}) in the parallel argument of Theorem \ref{theoexistcompact}. Thus, one can apply Theorem \ref{fixpointschauder} and deduce that $\mathcal{F}$ has a fixed point $\phi$, which satisfies:
\begin{equation} \label{eqoncompctphi}\left\{
\begin{aligned}
&a_n\Delta_\gamma \phi -  R_\gamma \phi+  \left\vert \tilde K(X_\phi) \right\vert^2_\gamma \phi^{-\frac{3n-2}{n-2} } - \frac{n-1}{n} \tau^2 \phi^{\frac{n+2}{n-2} } \\&+
2  \epsilon_1  \phi^{\frac{n+2}{n-2}} + 2  \epsilon_2 \phi^{-3} + 2 \epsilon_3 \phi^{\frac{n-6}{n-2} } =0 \text{ in } \Omega \\
&\phi = \frac{\phi_+ +\phi_-}{2} \text{ on } \partial \Omega, \end{aligned}
\right.\end{equation}
where $X_\phi$ is defined as in Step 1, and in particular solves
\begin{equation}\label{eqoncompctX}\Delta_{\gamma, \mathrm{conf}} X_\phi - \frac{n-1}{n} \nabla \tau \phi^{\frac{2n}{n-2} } - \omega_1\phi^{2\frac{n + 1}{n-2}} + \omega_2  =0 \text{ in } \Omega. \end{equation}

Together $(\phi, X_\phi)$ realize a solution of \eqref{systoncomplphiX} on $\Omega$.
\end{proof}

One can then apply a diagonal extraction scheme similar to that of Lemma \ref{Xagainstf} to obtain global solutions. This is a key difference with respect to barrier-based existence criteria of Theorem \ref{theoexistcompact} where a global solution on the (compact) manifold is directly obtained after the fixed point argument.

\begin{theorem}
\label{050720211748bis}
Assume that on a complete smooth manifold $(M^n, \gamma)$, $n\ge 3$, with no global CKF,  the system \eqref{systoncomplphiX}  satisfies \eqref{hypcomplLp} and admits  two global barrier functions $0< \phi_- \le \phi_+ \le m < \infty$, and that the first eigenvalue of the conformal Killing Laplacian satisfies $\lambda_{1, \gamma, \mathrm{conf} }>0.$ Then \eqref{systoncomplphiX} admits a  solution $(\phi,X)\in W^{2,p}_{\mathrm{loc}}(M)$.
\end{theorem}
\begin{proof}
Let us consider a compact exhaustion $\{\Omega_k\}_{k=1}^\infty$ of $M$ with smooth boundary and let $(\phi_k, X_k)$ be a solution to \eqref{systoncomplphiX} on $\Omega_k$, obtained with Lemma \ref{existenceoncompact}. By construction of $X_k$ as the solution to the momentum equation defined in Corollary \ref{050720211748}, it satisfies  \eqref{XSol-UniformEstimate} for arbitrary compacts $K \subset K' \subset \Omega_k$. In particular, one obtains $W^{2,p}$ controls on $X_k$ that depend on the chosen compacts $K$ and $K'$, but which are uniform in $k$.

Let us now consider $K \subset \subset K' \subset \subset K''\subset \Omega_k$. Since $\phi_{-} \in  W^{2,p}(K')\hookrightarrow  C^{0, \alpha}(K')$ and $\phi_->0$, there exists $l_{K'} = \min_{K'} \phi_->0$ such that on $K'$, $\phi_k \ge  \phi_-\ge l_{K'}$. Working in parallel to Step 1 in the proof of Lemma \ref{existenceoncompact} above, there is a constant $C>0$, depending on $\|R_\gamma\|_{L^p(K')},  \| U \|_{L^p (K')},  \| \epsilon_1  \|_{L^p (K')}, \| \epsilon_2 \|_{L^p (K')}, \| \epsilon_3  \|_{L^p (K')}$, $\| \tau^2 \|_{L^p (K')},l_{K'}$ and $m$ such that, for any $k$ and for $h_{\phi_k, 0}$ defined in \eqref{defdeh}:  
$$\|h_{\phi_k, 0} (\phi_k)\|_{L^p(K')} \le C \left( \|\nabla X_k\|_{L^p(K')} + 1\right),$$
which again coupled with (\ref{XSol-UniformEstimate}) applied between $K'$ and $K''$ provides a constant $C$ with similar dependences such that:
\begin{equation}
\label{uniformcontrolhphik0phik}
\|h_{\phi_k, 0} (\phi_k)\|_{L^p(K')} \le C.
\end{equation}

Interior elliptic estimates between $K$ and $K'$ together with (\ref{uniformcontrolhphik0phik}) and the uniform bound $\phi_k \le m$ yield the existence of a constant $C$ that once more depends on $L^p(K'') \cap L^2(M)$ norms on the data, $K$, $K'$, $K''$ and $l_{K'}$, $m$ such that 
\begin{equation}
\label{uniformcontrolphikoncompacts}
\| \phi_k\|_{W^{2,p} (K ) }  \le C.
\end{equation}
This control depends on the compacts, but not on $\Omega_k$, and is uniform in $k$. Then, \eqref{XSol-UniformEstimate} and  \eqref{uniformcontrolphikoncompacts} yield that $\left\{\psi_k \right\}_{k =1}^\infty= \left\{(\phi_k, X_k)\right\}_{k =1} ^\infty$ is uniformly $W^{2,p}$ bounded on any $\Omega_s$, $s<k$. We may then finish the proof by a diagonal extraction following closely the same arguments as in Step 4 of Lemma \ref{Xagainstf} and thus obtaining $\psi=(\phi,X)\in W^{2,p}_{loc}(M)$ solving \eqref{systoncomplphiX} around any point, and hence over all of $M$.

\end{proof}

\begin{remark}
Classical elliptic regularity ensures that more regularity in the data for Theorem \ref{050720211748bis} yields more regularity for the solution. In particular, smooth data will yield smooth solutions.
\end{remark}

\begin{remark}
\label{remarkbarrierfunctions}
We again stress our appeal to \emph{global barrier functions} in the above theorem, instead of  \emph{strong global barrier functions} in Section \ref{sectionPDEgen}. The difference lies in the space of admissible $X$ for which the barriers are sub and super solutions of the Lichnerowicz equation  (for all $X$ in  a $W^{2,p}$ ball in the first case, for all $X$ solving an equation in the second one) and the conditions required to solve the equation.
\end{remark}

\begin{remark}
It is interesting to investigate the flexibility of our scheme.  For Einstein-type systems \eqref{300320201529}, if the $Y$ equations are decoupled once $\phi$ is considered a datum, one can proceed as  in Lemma \ref{050720211748} and Theorem \ref{050720211748bis} under the appropriate spectral and integrability assumptions. 

If we no longer assume that the $Y$-system decouples but is ``triangular'' as in Example \ref{020320211748}, one can assume that $\tilde q \in L^2(M)$, that  $\lambda^{-\Delta_\gamma}_1>0$, and first find solutions $f_\varphi$, and then the corresponding $X_\varphi$ when $\nabla \tau$, $\omega_1$, $\tilde F_{ik} \mathcal{V}^k$,  and $\tilde F_{ik} \nabla^k f_\varphi$ lie in $L^2(M)$. This requires a uniform $L^2$ control on $\nabla f_\varphi$, and thus a uniform $W^{1,2}$ a priori estimate on $f_\varphi$ working as in \eqref{Pigola.2} and  \eqref{280720211228} below.
\end{remark}

\section{Building global barrier functions}\label{section210720211556}

\subsection{Barrier functions on Bounded Geometry}\label{subsecbarrfunct}
In this section we assume that $(M, \gamma)$ is a smooth complete Riemannian manifold of bounded geometry (see Appendix \ref{BoundedGeometry} for detailed definitions). We also assume that  $a \doteq R_{\gamma}+\frac{n-1}{n}\tau^2\in  L_{\mathrm{loc}}^{p}(M) \cap L^\infty(M)$, $\tau\in W^{1,p}_{\mathrm{loc}}$, with $p> n$,  and $a\geq a_0>0$ for some constant $a_0$. This condition can be interpreted as a restriction on the admissible mean curvatures, given $R_\gamma$.

Let us fix an exhaustion of $M$ by precompact sets $\{\Omega_k \}$ with smooth boundaries, two constants $0<c_{-} \leq c_+$, $\Lambda_{\pm}\in  L^{\infty}(M)\cap C^{\infty}(M)$ positive functions, $\Lambda_{+}\geq \Lambda_{-}$, and analyse the sequence of problems:
\begin{equation}
\label{subsolsBG.1}
\left\{
\begin{aligned}
a_n\Delta_{\gamma}\varphi_- - a\varphi_-&=c_{-}a - \Lambda_{-}  \text{ in } \Omega_k,\\
\varphi_-&=0 \text{ on } \partial\Omega_k,
\end{aligned}
\right.
\end{equation}

\begin{equation}\label{supersolBG.1}
\left\{
\begin{aligned}
a_n\Delta_{\gamma} \varphi_+ - a \varphi_+ &=c_{+}a - \Lambda_{+}, \text{ in } \Omega_k\\
\varphi_+&=0 \text{ on } \partial\Omega_k.
\end{aligned}
\right.
\end{equation}

From  \eqref{subsolsBG.1} and \eqref{supersolBG.1} we build two sequences of solutions $ \left\{ \varphi_k^- \right\}$ and $\left\{ \varphi_k^+ \right\}$ and  define  $u_k= \varphi_k^- +c_-$ and $v_k = \varphi_k^+ + c_+$, which are solutions to

\begin{equation} \label{subsolsBG.2}
\left\{ \begin{aligned}
a_n\Delta_\gamma u_k - a u_k &= -\Lambda_{-} \text{ in } \Omega_k, \\
u_k &= c_- \text{ on } \partial \Omega_k,
\end{aligned}
\right.
\end{equation}

\begin{equation} \label{supersol.2}
\left\{ \begin{aligned}
a_n\Delta_\gamma v_k - a v_k &= - \Lambda_{+} \text{ in } \Omega_k, \\
v_k &= c_+ \text{ on } \partial \Omega_k.
\end{aligned}
\right.
\end{equation}

\begin{lem}\label{SupersolUpperBound}
Under the above conditions there is a constant $c>0$, depending on the fixed quantities $c_{\pm},\Lambda_{\pm},a_{0}$ such that
\begin{align}
0< u_{k}\leq v_{k}\leq c<\infty \text{ for all } k\in \mathbb{N}.
\end{align}
\end{lem}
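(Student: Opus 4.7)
The strategy is to rely solely on the weak maximum principle for the operator $L \doteq \Delta_\gamma - a$ with $a \geq a_0 > 0$. The key elementary fact is: if $Lu \leq 0$ in $\Omega_k$ and $u \geq 0$ on $\partial \Omega_k$, then $u \geq 0$ in $\Omega_k$. Indeed, at an interior negative minimum $x_0$ one would have $\Delta_\gamma u(x_0) \geq 0$ and $-a(x_0) u(x_0) > 0$, so $Lu(x_0) > 0$, a contradiction. The same argument flipped upside down yields the symmetric statement for supersolutions.

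First I would establish positivity of $u_k$. Applying the max principle to $u_k$ itself, which satisfies $L u_k = -\Lambda_- \leq 0$ with $u_k = c_- > 0$ on $\partial \Omega_k$, gives $u_k \geq 0$ directly; strict positivity then follows from the strong maximum principle, since $u_k \equiv 0$ is ruled out by the nonzero boundary values.

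Next I would establish $u_k \leq v_k$ by applying the weak maximum principle to $w \doteq v_k - u_k$. A direct calculation gives $Lw = -(\Lambda_+ - \Lambda_-) \leq 0$ by the assumption $\Lambda_+ \geq \Lambda_-$, and $w|_{\partial\Omega_k} = c_+ - c_- \geq 0$, so the principle yields $w \geq 0$.

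Finally, I would produce the uniform upper bound for $v_k$ by comparing it with a well-chosen constant. Setting
\[
c \doteq \max\!\left(c_+,\; \frac{\|\Lambda_+\|_{L^\infty(M)}}{a_0}\right),
\]
the constant function $c$ satisfies $L c = -a c \leq -\Lambda_+$ (since $a c \geq a_0 \cdot \|\Lambda_+\|_\infty/a_0 \geq \Lambda_+$ pointwise) and $c \geq c_+ = v_k$ on $\partial\Omega_k$. Applying the weak maximum principle to $c - v_k$ then yields $v_k \leq c$, and $c$ depends only on $c_+$, $\|\Lambda_+\|_{L^\infty(M)}$ and $a_0$ as required. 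The only mild subtlety is checking that the max principle applies in the $W^{2,p}_{\mathrm{loc}}$ class, which is standard since $p>n$ ensures continuous representatives; I expect this to cause no real trouble.
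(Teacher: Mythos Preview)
Your proposal is correct and follows essentially the same approach as the paper: both argue by the weak/strong maximum principle for $\Delta_\gamma - a$, first comparing $u_k$ and $v_k$, then comparing $v_k$ with a constant supersolution. The only cosmetic difference is the choice of upper constant: the paper takes $c=\max\bigl(\sup_M \Lambda_+/a,\, c_+\bigr)$, which is pointwise sharper than your $\max\bigl(c_+,\, \|\Lambda_+\|_{L^\infty}/a_0\bigr)$, but either choice works and the dependence on the stated data is the same.
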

\begin{proof}
Since $a>0$ and $0\le c_-\le c_+$, then $h_k\doteq u_k-v_k$ satisfies
\begin{align*}
\left\{ \begin{aligned}
a_n\Delta_\gamma h_k - a h_k &= \Lambda_{+} - \Lambda_{-}\geq 0 \text{ in } \Omega_k, \\
h_k &=c_{-}-c_{+}\leq 0  \text{ on } \partial \Omega_k.
\end{aligned}
\right.
\end{align*}
we can apply the maximum and comparison principles to get $0< u_k \le v_k$,
 where the first inequality follows from the maximum principle since $u_k\geq 0$ and can only attain a non-positive interior minimum if it is constant. 

Now, let $c>0$ be a constant to be determined and consider the difference $\bar{v}_k\doteq v_k-c$ on $\Omega_k$. Then, 
\begin{align*}
\left\{ \begin{aligned}
a_n\Delta_\gamma \bar{v}_k - a \bar{v}_k &= - \Lambda_{+} + ac \text{ in } \Omega_k, \\
\bar{v}_k &=c_{+}-c  \text{ on } \partial \Omega_k.
\end{aligned}
\right.
\end{align*}
Then, let us choose $c$ satisfying
\begin{align*}
\infty > c\geq \max\Big\{\sup_M\frac{\Lambda_{+}}{a},c_{+}\Big\},
\end{align*}
where the first inequality holds since $a\geq a_0>0$ and $\Lambda_{+}\in C^{\infty}(M)\cap L^{\infty}(M)$ imply $\sup_M\frac{\Lambda_{+}}{a}<\infty$. This choice is independent of $v_k$ and $\Omega_k$. Therefore, for any $k\in \mathbb{N}$, we find
\begin{align}
\left\{ \begin{aligned}
a_n\Delta_\gamma \bar{v}_k - a \bar{v}_k &\geq 0 \text{ in } \Omega_k, \\
\bar{v}_k &\leq 0  \text{ on } \partial \Omega_k.
\end{aligned}
\right.
\end{align}
An application of the maximum principle then gives $\bar{v}_k\leq 0$, which is equivalent to $v_k\leq \max \left(\sup_M\frac{\Lambda_{+}}{a},c_{+}\right)$.
\end{proof}
\begin{cor}
\label{corSupersolUpperBound}
If in addition to the hypothesis of Lemma \ref{SupersolUpperBound} we assume that $\Lambda_{+}=a$, $0<c_{-}<1$ and $c_{+}\geq 1$ are fixed, and $\Lambda_{-}=c_{-}a$,  then $u_k=c_{-}$ for all $k\in \mathbb{N}$ and we get $0<c_{-}=u_k\leq v_{k}\leq c<\infty$ for all $ k\in \mathbb{N}$.
\end{cor}
\begin{proof}
Inserting these choices, which satisfy all the general requirements of Lemma \ref{SupersolUpperBound}, into the proof yields the desired result.
\end{proof}

Appealing to the above lemma, let us now show that the sequences $\{u_k\}_{k=1}^{\infty}$ and $\{v_k\}_{k=1}^{\infty}$ admit subsequences that converge to solutions of an associated problem over all of $M$. 

\begin{lem}\label{BarrierConstructions1}
The PDE problems
\begin{align}
\label{equationforu}
\begin{split}
a_n\Delta_{\gamma}u - au&=-\Lambda_{-} \text{ on } M, \text{ with } u>0,
\end{split}
\end{align}
and 
\begin{align}
\begin{split}
a_n \Delta_{\gamma}v - av&=-\Lambda_{+} \text{ on } M, \text{ with } v>0,
\end{split}
\end{align}
admit solutions $u,v\in C^\infty(M)$ satisfying $0< u\leq v\leq c$, where $c(a_0,\Lambda_{+},c_{+})$ is the same constant appearing in Lemma \ref{SupersolUpperBound}.
\end{lem}
\begin{proof}
Consider the sequence of solutions $\{u_k\}_{k=1}^{\infty}$ and $\{v_k\}_{k=1}^{\infty}$ associated with (\ref{subsolsBG.2}) and (\ref{supersol.2}). Let us now fix some $\Omega_{k'} \subset U_{k'} \subset \Omega_{k'+1}$,   consider $\{u_k\}_{k> k'}$ and appeal to the interior elliptic estimates
\begin{equation}
\label{111120201333}
\begin{aligned}
\Vert u_k\Vert_{W^{2,2}(\Omega_{k'})}
&=C(\Omega_{k'},U_{k'})\left( \| \Lambda_-\|_{L^2(U_{k'} )} +\Vert u_k\Vert_{L^2(U_{k'})} \right).\\
\end{aligned}
\end{equation}
Using Lemma \ref{SupersolUpperBound}, we know that $u_k\leq c$, with $c$ independent of $k$. Then $\Vert u_k\Vert_{L^2(U_{k'})}\leq c\mathrm{Vol}(U_{k'})^{\frac{1}{2}}$, where $k'$ is fixed. Therefore, we find
\begin{align}
\label{111120201333ter}
\Vert u_k \Vert_{W^{2,2}(\Omega_{k'})}&\leq C(\Omega_{k'}, U_{k'}) \text{ for all } k\geq k'.
\end{align} 

We can work similarly on $\{v_k\}_{k\geq k'}$ and obtain: 
\begin{align}\label{111120201333quart}
\Vert v_k \Vert_{W^{2,2}(\Omega_{k'})}&\leq C(\Omega_{k'},U_{k'}) \text{ for all } k\geq k'.
\end{align}

Doing as in \eqref{050720211611}-\eqref{050720211639} we can bootstrap \eqref{111120201333ter} and  \eqref{111120201333quart} into a uniform $W^{2,p}$ estimate on the $\Omega_{k'}$. Thus, the sequence is uniformly bounded in $W^{2,p}(\Omega_{k'})$ and  working as in Step 4 of the proof of Lemma \ref{Xagainstf} we find $u, v \in W^{2,p}_{\mathrm{loc}}(M)$  solutions of $a_n \Delta_{\gamma}u - au=- \Lambda_-$ and $a_n\Delta_{\gamma}v - av= - \Lambda_+$.  Since $a$ and $\Lambda_{\pm}$ are smooth, local regularity yields $u,v \in C^{\infty}(M)$. Furthermore, since $0\leq u_k\leq v_k\leq c$ for all $k$, we find 
\begin{align*}
0\leq u\leq v\leq c.
\end{align*}

We still need to exclude the possibility of $u= 0$. Assume there is some point $p\in M$ such that $u(p)=0$. Since $u\geq 0$, then such point is a minimum of $u$. But, it also holds that
\begin{align}
a_n\Delta_{\gamma}u(p)=au(p) -\Lambda_{-}(p)= - \Lambda_{-}(p)<0,
\end{align}
which contradicts the fact that $p$ is a minimum of $u\geq 0$, and therefore $u>0$.
\end{proof}

\begin{cor}
\label{corSupersolUpperBoundbis}
Under the conditions of Corollary \ref{corSupersolUpperBound}, one has $0< c_- \le u \le v \le c <\infty$.
\end{cor}

The functions $u$ and $v$ constructed above are good starting points for barrier functions. Let us show that one can choose  $0 < \alpha \le \beta $, $\beta \ge 1$, and  $0< \Lambda_- \le \Lambda_+$ such that $\phi_- \doteq \alpha u $ and $\phi_+\doteq  \beta \left( 1 +  v \right)$ are respectively global sub and supersolutions.  Since $\alpha \le \beta$, they will still satisfy  $0<\phi_{-}\leq \phi_{+}$.

\subsubsection{\textbf{Construction of a global supersolution}}

Setting $\phi_+ =  \beta \left(1+ v\right)$, let  $\varphi$  and $Y$ be given such that
$$\begin{aligned} & \phi_- \le \varphi \le \phi_+, \quad  \Delta_{\gamma, \mathrm{conf} } Y - \frac{n-1}{n} \nabla \tau \varphi^{\frac{2n}{n-2}}  - \omega_1\varphi^{2\frac{n + 1}{n-2}} + \omega_2 =0  \end{aligned}$$
Then
$$a_n\Delta_\gamma \phi_+ =  \beta \left(  R_\gamma + \frac{n-1}{n}\tau^2  \right)v - \beta \Lambda_+ = - \beta \left( R_\gamma + \frac{n-1}{n} \tau^2  \right) +\left( R_\gamma + \frac{n-1}{n} \tau^2  \right)  \phi_+  -  \beta \Lambda_+ ,$$ 
and
\begin{equation} \label{280720211356}
\begin{aligned}
\mathcal{H}(\phi_{+})&\doteq a_n\Delta_{\gamma}\phi_{+} - R_{\gamma}\phi_{+} - \frac{n-1}{n}\tau^2\phi_{+}^{\frac{n+2}{n-2}} + \vert \tilde{K}\vert^2_{\gamma}\phi_{+}^{-\frac{3n-2}{n-2}}+ 2\epsilon_1\phi_{+}^{\frac{n+2}{n-2}}  +2\epsilon_2\phi_{+}^{-3}\\
&+2\epsilon_3\phi_{+}^{\frac{n-6}{n-2}},\\
&\leq -\beta\Lambda_+ +  \left(\phi_+- \phi_+^{\frac{n+2}{n-2}}\right) \frac{n-1}{n}\tau^2 + \vert \tilde{K}\vert^2_{\gamma}\phi_{+}^{-\frac{3n-2}{n-2}}+ 2\epsilon_1\phi_{+}^{\frac{n+2}{n-2}}  +2\epsilon_2\phi_{+}^{-3}\\
&+2\epsilon_3\phi_{+}^{\frac{n-6}{n-2}}  \\
&\leq \beta \big( - \Lambda_+ + \left( 1+c \right) \left( 1- \beta^{\frac{4}{n-2} } \right) \frac{n-1}{n} \tau^2+\left\vert \tilde K \right\vert_\gamma^2 \beta^{-\frac{4n-4}{n-2}} + 2 \epsilon_1 (1+c)^{\frac{n+2}{n-2}} \beta^{\frac{4}{n-2}} \\
& + 2 \epsilon_2 \beta^{-4} +2 \epsilon_3 \beta^{-\frac{4}{n-2}}(1+c)^{\frac{n-6}{n-2}} \big)
 \end{aligned}\end{equation}
where $c=\max \left(\sup_M\frac{\Lambda_{+}}{a},c_{+}\right)$ is the same constant appearing in Lemma \ref{SupersolUpperBound}.
Let us now recall that
\begin{align}
\vert \tilde{K}\vert^2_\gamma\leq 2\left(\vert \pounds_{\gamma, \mathrm{conf}}Y \vert^2_{\gamma} + \vert \tilde{U}\vert^2_{\gamma} \right).
\end{align}
To obtain that $\phi_+$ is a global supersolution, i.e. that $\mathcal{H}(\phi_+) \le 0$, we need an a priori bound on solutions $Y$ of the momentum constraint with $\varphi \le \phi_+$.

\begin{proposition}
Let $(M^n,\gamma)$ be a smooth Riemannian  manifold of bounded geometry  such that $\lambda_{1,\mathrm{conf}} (M) >0$. Let $p >n$ and assume that:
\begin{equation} \nabla \tau , \omega_1, \omega_2 \in L^2 (M) \cap L^p(M). \end{equation}
Then, for all $ \varphi \le \phi_+ \le c $,  any $L^2\cap W^{1,p}_{\mathrm{loc}}$ solution of the momentum constraint 
$$\Delta_{\gamma, \mathrm{conf} } Y - \frac{n-1}{n} \nabla \tau \varphi^{\frac{2n}{n-2}} - \omega_1\varphi^{2\frac{n + 1}{n-2}} + \omega_2 =0$$
satisfies:
\begin{equation}
\label{280720211355}
\begin{aligned}\!\!\!\!\vert  \pounds_{\gamma,\mathrm{conf}}&Y \vert^2_{\gamma} \le \frac{C(n ,M,\gamma)}{\lambda_{1, \gamma, \mathrm{conf}} } \left[ \left( \sum_{j=0}^{j_{\mathrm{max}}} \| \nabla \tau \|_{L^{p_j}(M)}+ \| \nabla \tau \|_{L^{p}(M)} \right) (1+c)^{\frac{2n}{n-2} } \right. \\ & \left. + \left( \sum_{j=0}^{j_{\mathrm{max}}} \| \omega_1 \|_{L^{p_j}(M)} + \| \omega_1 \|_{L^{p}(M)}  \right)(1+c)^{2\frac{n+1}{n-2} }+ \left( \sum_{j=0}^{j_{\mathrm{max}}} \| \omega_2 \|_{L^{p_j}(M)} + \| \omega_2 \|_{L^{p}(M)}  \right) \right],
\end{aligned}
\end{equation}
where $(p_j)_{j \in \mathbb{N}\cup \{0\}}$ is the sequence defined by induction as:
$p_0=2, \, p_{j+1} = \frac{n p_j}{n-2 p_j}$, and $j_{\mathrm{max}}$ is the first integer for which $\frac{n p_{j_{\mathrm{max}}-1}}{n-2p_{j_{\mathrm{max}}-1}} \ge \frac{n}{2},$ and $p_{j_{\mathrm{max}}} = \min \left( p, \frac{n p_{j_{\mathrm{max}}-1}}{n-2p_{j_{\mathrm{max}}-1}} \right)$.
\end{proposition}
\begin{remark}
Since for any $2 \le q \le p$ we may interpolate (see, for instance, \cite[Chapter 4, p 93]{BrezisBook}):
\begin{equation}
\begin{aligned}
\label{020820211138}
\| V\|_{L^q(M)}\le  \| V\||^{\alpha}_{L^2(M)}|| V||^{1-\alpha}_{L^p(M)}\le \max \left( \| V \|_{L^2(M)} , \| V \|_{L^p(M)} \right), \text{ where }  \frac{1}{q}=\frac{\alpha}{2}+\frac{1-\alpha}{p},
\end{aligned}
\end{equation}
the right-hand side of estimate \eqref{280720211355} is then finite. 
\end{remark}
\begin{proof}
We will first use Lemma \ref{Pigola} to control the $L^2$ norm of $Y$ using the first eigenvalue of the conformal Killing Laplacian, and then proceed with a bootstrap in bounded geometry to prove the estimate. It should be noticed that elliptic estimates ensure that $Y \in W^{1,p}_{\mathrm{loc}} \implies Y \in W^{2,p}_{\mathrm{loc}}$.

From Lemma \ref{Pigola}, we can deduce that our solution $Y$ satisfies: 
\begin{equation}\label{Pigola.2}
\begin{aligned}
\Vert \pounds_{\gamma,\mathrm{conf}} Y \Vert^2_{L^2(M)}
&\le C  \|  Y \|_{L^2(M)}  \left(  \| \nabla \tau \|_{L^2(M)} (1+c)^{\frac{2n}{n-2} }+ \|\omega_1 \|_{L^2(M)} (1+c)^{2 \frac{n+1}{n-1} } + \| \omega_2 \|_{L^2(M)} \right).
\end{aligned}
\end{equation}
We can then proceed as in the proof of Lemma \ref{050720211748} and write:
\begin{equation}
\label{050720211551bis}
\left\|  Y  \right\|_{L^2(M )}^2 \le \frac{1}{ \lambda_{1, \gamma, \mathrm{conf} } }  \left\| \pounds_{\gamma, \mathrm{conf} } Y  \right\|_{L^2(M)}^2,
\end{equation}
which with \eqref{Pigola.2} yields:
\begin{equation}
\label{280720211228}
\|  Y \|_{L^2(M)}  \le \frac{C(n)}{\lambda_{1, \gamma, \mathrm{conf} } }\left(  \| \nabla \tau \|_{L^2(M)} (1+c)^{\frac{2n}{n-2} }+ \|\omega_1 \|_{L^2(M)} (1+c)^{2 \frac{n+1}{n-1} } + \| \omega_2 \|_{L^2(M)} \right).
\end{equation}

We will now prove by induction that  for all $i \le j_{\mathrm{max}}$, one has
\begin{equation} \label{050720211612bis} \begin{aligned} \left\| Y \right\|_{W^{2,p_i} (M) } &\le \frac{C(n, \gamma,M )}{\lambda_{1, \gamma, \mathrm{conf}} } \left[ \left( \sum_{j=0}^{i} \| \nabla \tau \|_{L^{p_j}(M)} \right) (1+c)^{\frac{2n}{n-2} } +\right.  \\ & \left. \left( \sum_{j=0}^{i} \| \omega_1 \|_{L^{p_j}(M)} \right)(1+c)^{2\frac{n+1}{n-1} } +   \left( \sum_{j=0}^{i} \| \omega_2 \|_{L^{p_j}(M)} \right) \right]. \end{aligned}\end{equation}
\begin{itemize}
\item when $i=0$, $p_i=2$, and injecting the $L^2$ estimate \eqref{280720211228} into  Shubin's elliptic regularity estimates in \emph{bounded geometry} (see Lemma \ref{regularity.1}) this ensures that:  
$$\|  Y \|_{W^{2,2}(M)}  \le \frac{C(n, M, \gamma)}{\lambda_{1, \gamma, \mathrm{conf} } } \left(  \| \nabla \tau \|_{L^2(M)} (1+c)^{\frac{2n}{n-2} }+ \|\omega_1 \|_{L^2(M)} (1+c)^{2 \frac{n+1}{n-1} } + \| \omega_2 \|_{L^2(M)} \right).$$
\item
Assuming the result for $i< j_{\mathrm{max}}$, the Sobolev embeddings $W^{2,p_j}\subset L^{p_{j+1}}$ (true in \emph{bounded geometry}, see  \cite[Corollary 3.19]{MR1481970}, or \cite[last paragraph p. 68]{Shubin}) and the inductive hypothesis ensures that $Y \in L^{p_{i+1}}(M)$ with:
\begin{equation}
\label{280720211312}
\begin{aligned}
\|Y \|_{L^{p_{i+1}}(M)} &\le \frac{C(n, M, \gamma )}{\lambda_{1, \gamma, \mathrm{conf}} } \left[ \left( \sum_{j=0}^{i} \| \nabla \tau \|_{L^{p_j}(M)} \right) (1+c)^{\frac{2n}{n-2}}  +  \right. \\ & \left.  \left( \sum_{j=0}^{i} \| \omega_1 \|_{L^{p_j}(M)} \right)(1+c)^{2\frac{n+1}{n-1} } +   \left( \sum_{j=0}^{i} \| \omega_2 \|_{L^{p_j}(M)} \right)  \right],
\end{aligned}
\end{equation}
Shubin's elliptic regularity estimates in bounded geometry then yield:
$$\|  Y \|_{W^{2,p_{i+1}}(M)}  \le C(n, M, \gamma) \left(  \|  \Delta_{\gamma, \mathrm{conf} } Y \|_{L^{p_{i+1} } (M)}  + \| Y  \|_{L^{p_{i+1}}(M) } \right).$$
Injecting the estimate on $\Delta_{\gamma, \mathrm{conf} } Y$ and \eqref{280720211312} into the above yields the $p_{i+1}$ estimate.
\end{itemize}
 Estimate \eqref{050720211612bis} then stands true for $j_{\mathrm{max}}$.  Then: 
\begin{itemize}
\item
If $p_{j_{\mathrm{max}}} > \frac{n}{2}$, Sobolev embeddings $W^{2,p_j}\subset C^{0}$  ensure that:
\begin{equation}
\label{280720211312bis}
\begin{aligned}
\!\!\!\!\!\!\!\!\!\!\!\!\!\!\!\!\|Y \|_{L^{\infty}(M)} &\le \frac{C(n, M ,\gamma )}{\lambda_{1, \gamma, \mathrm{conf}} } \left[ \left( \sum_{j=0}^{j_{\mathrm{max}}} \| \nabla \tau \|_{L^{p_j}(M)} \right) (1+c)^{\frac{2n}{n-2}}   +  \left( \sum_{j=0}^{j_{\mathrm{max}}} \| \omega_1 \|_{L^{p_j}(M)} \right)(1+c)^{2\frac{n+1}{n-1} } +    \sum_{j=0}^{j_{\mathrm{max}}} \| \omega_2 \|_{L^{p_j}(M)}   \right].
\end{aligned}
\end{equation}
Since in addition $Y \in L^2(M)$, one can conclude that $Y \in L^p(M)$:
\begin{align}\label{280720211330}
\begin{aligned}
\| Y \|_{L^p(M)} &\le \left( \int_M \vert Y \vert^{p-2} \vert Y \vert^2 dV_\gamma \right)^{\frac{1}{p} } \le \| Y \|_{L^\infty(M) }^{1- \frac{2}{p} } \| Y \|_{L^2(M)}^{\frac{2}{p}} \\
& \le \frac{C(n, M ,\gamma )}{\lambda_{1, \gamma, \mathrm{conf}} }  \left[ \left( \sum_{j=0}^{j_{\mathrm{max}}} \| \nabla \tau \|_{L^{p_j}(M)} \right) (1+c)^{\frac{2n}{n-2}}   +   \left( \sum_{j=0}^{j_{\mathrm{max}}} \| \omega_1 \|_{L^{p_j}(M)} \right)(1+c)^{2\frac{n+1}{n-1} } + \right.\\ & \left.   \left( \sum_{j=0}^{j_{\mathrm{max}}} \| \omega_2 \|_{L^{p_j}(M)} \right)  \right],
\end{aligned}
\end{align}
where we used \eqref{280720211228}, \eqref{280720211312bis} and the non optimal but simplifying estimate:
$$\begin{aligned}  &\| \nabla \tau \|_{L^2(M)} (1+c)^{\frac{2n}{n-2} }+ \|\omega_1 \|_{L^2(M)} (1+c)^{2 \frac{n+1}{n-1} } + \| \omega_2 \|_{L^2(M)}  \le \left( \sum_{j=0}^{j_{\mathrm{max}}} \| \nabla \tau \|_{L^{p_j}(M)} \right) (1+c)^{\frac{2n}{n-2}}   +  \\& \left( \sum_{j=0}^{j_{\mathrm{max}}} \| \omega_1 \|_{L^{p_j}(M)} \right)(1+c)^{2\frac{n+1}{n-1} } +   \left( \sum_{j=0}^{j_{\mathrm{max}}} \| \omega_2 \|_{L^{p_j}(M)} \right). \end{aligned} $$
Applying once more Shubin's elliptic estimates yields a $W^{2,p}$ estimate which we translate into the proper $L^\infty$ control on $\pounds_{\gamma, \mathrm{conf} } Y$ thanks to Sobolev embeddings, which concludes the proof.
\item 
If $p_{j_{\mathrm{max}}} = \frac{n}{2}$, let us consider the $j_{\mathrm{max}} -1$ estimate, and use the Sobolev embedding $W^{2,p_{j_{\mathrm{max}}-1}}\subset L^{p_{j_{\mathrm{max}}}}$. Using estimate \eqref{020820211138} (and dealing with the maximum by injecting the $L^2$ estimate as in \eqref{280720211330}), one can obtain   a $L^{p_{j_{\mathrm{max}}} -\varepsilon}$ estimate for a small $\varepsilon>0$.  The induction proof ensures that we obtain a control in $W^{2, \tilde p_{j_{\mathrm{max}}}}$  for $\tilde p_{j_{\mathrm{max}}} = \min \left( p,  \frac{n ( p_{j_{\mathrm{max}}}-\varepsilon)}{n-2 ( p_{j_{\mathrm{max}}}-\varepsilon)} \right)>  p_{j_{\mathrm{max}}} \ge \frac{n}{2}$. We are then back in the first case, which concludes the proof. We do not change the notations on the right-hand side of the inequality for simplicity. Since all the $L^{p_j}$ and $L^{\tilde p_j}$ estimates are obtained  thanks  to \eqref{020820211138}, this  bears no impact on the final result.
\end{itemize}
\end{proof}

\begin{remark}
The bounded geometry hypothesis intervenes above to grant Sobolev embeddings and elliptic estimates. Other hypotheses preserving these two properties are likely to lead to similar constructions.
\end{remark}

To lighten notations, let us write 
\begin{equation}
\label{280720211347}
\begin{aligned}
M_{\nabla \tau} &= 2 \frac{C(n ,M,\gamma)}{\lambda_{1, \gamma, \mathrm{conf}} }  \left( \sum_{j=0}^{j_{\mathrm{max}}} \| \nabla \tau \|_{L^{p_j}(M)}+ \| \nabla \tau \|_{L^{p}(M)} \right) \\
M_{\omega_1} &= 2 \frac{C(n ,M,\gamma)}{\lambda_{1, \gamma, \mathrm{conf}} } \left( \sum_{j=0}^{j_{\mathrm{max}}} \| \omega_1 \|_{L^{p_j}(M)}+ \| \omega_1 \|_{L^{p}(M)} \right) \\
M_{\omega_2} &= 2 \frac{C(n ,M,\gamma)}{\lambda_{1, \gamma, \mathrm{conf}} } \left( \sum_{j=0}^{j_{\mathrm{max}}} \| \omega_2 \|_{L^{p_j}(M)}+ \| \omega_2 \|_{L^{p}(M)} \right)
\end{aligned}
\end{equation}

Injecting \eqref{280720211355} and $\beta \ge 1$ into  \eqref{280720211356} yields:
\begin{align*}
\mathcal{H}(\phi_{+}) &\leq \beta \left( - \Lambda_+ + \left( 1+c \right) \left( 1- \beta^{\frac{4}{n-2} } \right) \frac{n-1}{n} \tau^2+ (M_{\nabla \tau} + M_{\omega_1} + M_{\omega_2}) \left(1+c \right)^{\frac{4n}{n-2}} \beta^{ \frac{4}{n-2}} \right. \\ & \left.+2 \left\vert  U \right\vert^2  \beta^{- \frac{4n-4}{n-2} }+ 2 \epsilon_1 (1+c)^{\frac{n+2}{n-2}} \beta^{\frac{4}{n-2}}  + 2 \epsilon_2 \beta^{-4} +2 \epsilon_3 \beta^{-\frac{4}{n-2}} (1+c)^{\frac{n-6}{n-2} } \right).
\end{align*}

At this point it is worth noticing that $c = \max \left( \sup \frac{\Lambda_+}{a}, c_+ \right)$ depends on both parameters $\Lambda_+$ and $a$ (and thus $\tau$). To make $c$ independant of $\tau$, we will choose  $\Lambda_+ =a$. Setting $m = 1+c \ge 1$ implies:
$$\begin{aligned}
\mathcal{H}(\phi_{+}) &\leq \beta \left( -a + m \left( 1- \beta^{\frac{4}{n-2} } \right) \frac{n-1}{n} \tau^2+ (M_{\nabla \tau} + M_{\omega_1} + M_{\omega_2}) m^{\frac{4n}{n-2}} \beta^{ \frac{4}{n-2}} \right. \\ & \left.+2  \left\vert  U \right\vert^2  \beta^{- \frac{4n-4}{n-2} }+ 2 \epsilon_1 m^{\frac{n+2}{n-2}} \beta^{\frac{4}{n-2}}  + 2 \epsilon_2 \beta^{-4} +2 \epsilon_3 \beta^{-\frac{4}{n-2}} m^{\frac{n-6}{n-2} } \right).
\end{aligned}$$

Taking $\beta=1$ then yields:
$$\begin{aligned}
\mathcal{H}(\phi_{+}) 
&\le - \frac{n-1}{n} \tau^2 -  R_\gamma+  (M_{\nabla \tau} + M_{\omega_1} + M_{\omega_2}) m^{\frac{4n}{n-2}}  +2  \left\vert  U \right\vert^2  + 2 \epsilon_1 m^{\frac{n+2}{n-2}}  + 2 \epsilon_2  +2 \epsilon_3 m^{\frac{n-6}{n-2} }.
\end{aligned}$$
Then, there exists  a constant $C(n,M,\gamma,  \lambda_{1, \gamma, \mathrm{conf} },c_+)$ such that if
\begin{equation}
\label{280720211430}
\begin{aligned}
&\left\vert R_\gamma \right\vert + \sum_{j=0}^{j_{\mathrm{max}}} \| \nabla \tau \|_{L^{p_s}(M)}+ \| \nabla \tau \|_{L^{p}(M)}   + \sum_{j=0}^{j_{\mathrm{max}}} \| \omega_1 \|_{L^{p_s}(M)}+ \| \omega_1 \|_{L^{p}(M)}  + \\&   \sum_{j=0}^{j_{\mathrm{max}}} \| \omega_2 \|_{L^{p_s}(M)}+ \| \omega_2 \|_{L^{p}(M)} + \vert  U \vert +  \epsilon_1 + \epsilon_2 + \epsilon_3  \le C \tau^2,
\end{aligned}
\end{equation}
 $\phi_+$ is a global supersolution.

Using \eqref{020820211138}, one can simplify: $\| \nabla \tau \|_{L^{p_s}(M)}\le \max \left( \| \nabla \tau \|_{L^{2}(M)}, \| \nabla \tau \|_{L^{p}(M)}\right)$ and similarly $\| \omega_i \|_{L^{p_s}(M)} \le  \max \left( \| \omega_i \|_{L^{2}(M)}, \| \omega_i \|_{L^{p}(M)} \right)$, as featured in the final theorems. We have thus obtained:
\begin{lem}
\label{phi+isasupersolution}
Let $(M^n, \gamma)$ be a smooth Riemannian manifold of bounded geometry such that $\lambda_{1, \mathrm{conf}}>0$. There exists a constant $C(n,M,\gamma,  \lambda_{1, \gamma, \mathrm{conf} },c_+)$ such that if
\begin{equation}
\label{280720211430bis}
\begin{aligned}
&\left\vert R_\gamma \right\vert + \max \left( \| \nabla \tau \|_{L^{2}(M)}, \| \nabla \tau \|_{L^{p}(M)}\right)  + \max \left( \| \omega_1 \|_{L^{2}(M)}, \| \omega_1 \|_{L^{p}(M)} \right) + \\& \max \left( \| \omega_2 \|_{L^{2}(M)}, \| \omega_2 \|_{L^{p}(M)} \right) +\vert  U \vert+ \epsilon_1 + \epsilon_2 + \epsilon_3  \le C \tau^2,
\end{aligned}
\end{equation}
then $\phi_+ =1+v$, with $v$ as in Lemma \ref{BarrierConstructions1}, taken with $\Lambda_+=a \ge a_0>0$,  is a global supersolution to \eqref{EMSystem-CompleteManifolds}. 
\end{lem}


\subsubsection{\textbf{Construction of a global subsolution}} 

Let us now construct a global subsolution to be paired with the above $\phi_{+}$:

\begin{lem}\label{phi-isasubsolution}
Let $(M^n, \gamma)$ be a smooth Riemannian manifold of bounded geometry such that $\lambda_{1, \mathrm{conf}}>0$. Then:
\begin{enumerate}
\item if $n \le 6$ and $\epsilon_2 + \epsilon_3 >0$, there exists $0< \alpha<1$ such that $\phi_- = \alpha u$ is a global subsolution to  \eqref{EMSystem-CompleteManifolds}.
\item if $n>6$ and $\epsilon_2>0$, there exists $0< \alpha<1$ such that $\phi_- = \alpha u$ is a global subsolution to  \eqref{EMSystem-CompleteManifolds}.
\end{enumerate} 
Moreover,  if in any item above there is a uniform
lower bound $\varepsilon_0 > 0$, there is a choice $u= c_-<1$ in \eqref{equationforu} with  $\Lambda_- =ac_-$ which  gives a constant global subsolution $\phi_-$ compatible with the  supersolution $\phi_+$ given in Lemma \ref{phi+isasupersolution}.
\end{lem}

\begin{proof}
If the hypotheses of item 1 hold, let us then start by choosing $\alpha>0$ small enough so that $0<\phi_{-}\leq 1$, and thus $\phi^{-3}_{-},\phi^{\frac{n-6}{n-2}}_{-}\geq 1$. Then,
\begin{align*}
\mathcal{H}(\phi_-) &=a_n \Delta_\gamma \phi_{-} -  R_\gamma\phi_{-} - \frac{n-1}{n} \tau^2 \phi_-^{\frac{n+2}{n-2} } +  \left\vert \tilde K(X) \right\vert^2 \phi_-^{-\frac{3n-2}{n-2}} + 2\epsilon_1  \phi_-^{\frac{n+2}{n-2}} + 2  \epsilon_2 \phi_-^{-3} + 2  \epsilon_3 \phi_-^{\frac{n-6}{n-2} } \\ 
&\ge \alpha \frac{n-1}{n} \tau^2 u - \alpha\Lambda_{-}  - \frac{n-1}{n} \tau^2 \alpha^{\frac{n+2}{n-2} } u^{\frac{n+2}{n-2} } +  2 \epsilon_2  + 2  \epsilon_3  \\
&\ge \alpha u \frac{n-1}{n} \tau^2 \left( 1- \left( \alpha u \right)^{\frac{4}{n-2} }\right) +  2  \epsilon_2  + 2  \epsilon_3 - \alpha\Lambda_{-}
\end{align*}
From our choice of $\alpha$ we find that $1- \left( \alpha u \right)^{\frac{4}{n-2}}=1- \phi_{-}^{\frac{4}{n-2}}\geq 0$. Then, if $\epsilon_2+\epsilon_3>0$ on $M$, then there is a choice of $\Lambda_{-}>0$ satisfying
\begin{align}
\label{290720211028}
2  \epsilon_2  + 2  \epsilon_3 \geq \alpha\Lambda_{-}>0.
\end{align}

After fixing such a choice we have that $\mathcal{H}(\phi_-)\geq 0$, and therefore $\phi_{-}$ is  a global subsolution.

When the hypotheses of item 2 hold, then the reasoning is very similar. The difference lies in the estimates for the $\phi^{\frac{n-6}{n-2}}_{-} \ge 0$ terms. The condition thus falls entirely on $\epsilon_2$ and one can find an admissible $\Lambda_-$ and $\alpha$ if and only if 
\begin{equation} \label{290720211028bis}\epsilon_2>0. 
\end{equation}

Finally, in any of the above cases, if $\Lambda_{-}\doteq c_{-}a\leq \Lambda_{+}=a$ (as chosen when constructing $\phi_{+}$), then $u=c_{-}$ satisfies \eqref{equationforu} and $\phi_{-}\doteq \alpha c_{-}\leq 1\leq \phi_{+}$. Finally if $\epsilon_0>0$ exists providing a positive lower bound for the energy densities, then the above analysis and a sufficiently small choice of $\alpha>0$ grant $\phi_{-}$ is a global subsolution.
\end{proof}

\begin{remark}
Should one keep the cosmological constant in \eqref{ECE}, it would induce another term of critical exponent. For instance, in a physical scenario with $\Lambda>0$, this cosmological term can be grouped with $\epsilon_1$, and one can apply the analysis of Lemma \ref{phi-isasubsolution}.
\end{remark}

\subsection{Vacuum global subsolutions}

A mathematical issue with the construction of barrier functions in Section \ref{subsecbarrfunct} is that \eqref{290720211028} and \eqref{290720211028bis} do not allow vacuum solutions. However restricting, this is physically reasonable if our main physical motivation for this analysis are cosmological open universes\footnote{This is specially the case for electromagnetic sources present in the cosmic microwave background radiation.}.  In any case, to counterbalance this, one can notice that  the supersolution does allow for $\epsilon_1, \epsilon_2, \epsilon_3 =0$, and is bounded from below (in fact with our construction $\phi_+ \ge 1$).   A subsolution $\phi_- \le 1$ built with another method would then yield  compatible barrier functions.

\subsubsection*{Reminders on Yamabe-type equations}
We now build a bounded subsolution following ideas from P. Mastrolia, M. Rigoli and A. Setti's \cite{MR2962687} on a priori estimates and existence conditions for Yamabe-type equations:
\begin{equation}
\label{Yamabetypee}
\Delta u + a(x) u- b(x)  u^\sigma = 0, \text{ with } \sigma >1.
\end{equation} 
We will use two of their theorems that we assemble in the result below, for convenience:

\begin{theorem}
\label{theoartisanal1}
Let $(M^n, g)$ be a smooth complete Riemannian manifold.
Let $a(x)$, $b(x) \in C^{0,\alpha}_{ \mathrm{loc}} (M)$ for some $0< \alpha \le 1$. Assume that:
\begin{equation} \label{h1} b(x) \ge 0, \, b(x)>0 \text{ outside a compact set},\end{equation}
 \begin{equation} \label{h2} \lambda_1^L (B_0) >0  \text{ where } B_0 = \{ x \in M \, : \, b(x)=0 \} \text{ and } L= \Delta_g +a(x), \end{equation}
\begin{equation} \label{h3} a(x)\le A, \quad  B \le b(x)\end{equation}
\begin{equation} \label{h4}\mathrm{Ric} \ge -(n-1) H^2 (1+r^2)\end{equation}
\begin{equation} \label{h5} \lambda_1^L (M) <0.\end{equation}
where $A,B$ are two positive constants, $H$ is a constant, $r(x)=d_g(p,x)$ denotes the distance function to a fixed point $p\in M$, and $\lambda_1^L(B_0)$ denotes the first eigenvalue of $L$ over $B_0$ as defined by \eqref{definitionvapsurOmega}\footnote{See \cite[Theorem 4.4, Theorem 6.7]{MR2962687} for further details}.
Then the equation
\eqref{Yamabetypee}
possesses a positive solution satisfying 
$$0<u(x) \le C,$$ for a constant $C>0$.
\end{theorem}
\begin{proof}
Hypotheses \eqref{h1}, \eqref{h2}, \eqref{h5} ensure \cite[Theorem 6.7]{MR2962687} applies: there exist positive solutions to \eqref{Yamabetypee}. On the other hand hypotheses \eqref{h4}  and  \eqref{h3} mean that we can apply  \cite[Theorem 4.4]{MR2962687} with $\alpha = \beta =0$ to obtain the uniform upper bound.
\end{proof}

\begin{remark}
\label{290720211158}
A quick way to ensure hypothesis \eqref{h2} is to impose  $b>0$ on the whole of $M$, which indeed implies $B_0=\emptyset$ and thus by definition $\lambda_1^L (B_0) = + \infty >0$ (we refer the reader  to the discussion  in   \cite[p.159]{MR2962687} as well as the proofs of Theorems 6.2 and 6.7 for more details).
\end{remark}

We now look for a global subsolution of \eqref{systoncomplphiX} from a bounded solution $\varphi_-$ of an equation  of Yamabe type, as in \eqref{Yamabetypee},
with $a$ and $b$ to be chosen according to our needs.  We propose our subsolutions to be of the form $\phi_- = \kappa \varphi_-$, such that 
$$\begin{aligned}
\mathcal{H}( \phi_-)&\doteq a_n \Delta_\gamma \phi_-  - R_\gamma \phi_- + \left\vert \tilde K (X)\right\vert^2 \phi_-^{- \frac{3n-2}{n-2}} -\frac{n-1}{n}  \tau^2 \phi_-^{\frac{n+2}{n-2}} + 2\epsilon_1 \phi_-^{\frac{n+2}{n-2}} +2 \epsilon_2 \phi_-^{-3}+2 \epsilon_3 \phi_-^{\frac{n-6}{n-2}} \ge 0.
\end{aligned}$$

Setting $a= -  R_\gamma$, $b =\frac{n-1}{n}\tau^2$ in (\ref{Yamabetypee}) and if $\varphi_-$ is a positive solution of 
\begin{equation} \label{solRtau} a_n \Delta_\gamma \varphi_- -R_\gamma \varphi_- - \frac{n-1}{n} \tau^2 \varphi_-^{\frac{n+2}{n-2} }=0,
\end{equation}
then for any $X:$
$$\begin{aligned}  \mathcal{H}(\phi_-) &\ge a_n \Delta_\gamma \phi_- - R_\gamma \phi_- -\frac{n-1}{n}\tau^2 \phi_-^{\frac{n+2}{n-2} } \\
&\ge a_n \kappa \Delta_\gamma \varphi_- -  R_\gamma \kappa \varphi_- - \frac{n-1}{n} \tau^2 \kappa^{ \frac{n+2}{n-2} } \varphi_-^{\frac{n+2}{n-2}} \\
&\ge \frac{n-1}{n}\tau^2 \varphi_-^{\frac{n+2}{n-2} } \kappa \left( 1- \kappa^{\frac{4}{n-2}} \right) \ge 0
\end{aligned}$$
for $\kappa \le 1$. Thus if there exists a positive bounded solution to \eqref{solRtau},  one can find a strictly positive subsolution to \eqref{systoncomplphiX} compatible with the supersolution $\phi_+$ defined in Lemma \ref{phi+isasupersolution}.

\begin{theorem}
\label{theosetup1}
Let $(M^n, \gamma)$ be a smooth, complete manifold, $n\ge 3$. We assume that 
$\tau \in C^{0,  \alpha}_{ \mathrm{loc} } (M)$ for some $0< \alpha \le 1$. We further assume 
\begin{equation}\label{hl1}
\begin{aligned}
&\mathrm{Ric} \ge -(n-1) H^2 (1+r^2),  \quad
R_\gamma \ge -A, 
\end{aligned}
\end{equation}
\begin{equation}
\label{hl1bis}
\vert \tau \vert \ge B>0 \text{ outside a compact set,} 
\end{equation}
\begin{equation}
\label{hl2}
\begin{aligned}
&\lambda^{ -a_n \Delta_\gamma +  R_\gamma}_1(B_0) >0, \quad 
\lambda^{-a_n \Delta_\gamma +  R_\gamma}_1 (M)< 0,
\end{aligned}
\end{equation}
$\text{ where } B_0 = \{ x \in M \, : \, \tau(x) = 0 \}.$
Then  for any $m>0$  the Lichnerowicz equation  \eqref{systoncomplphiX} admits a strictly positive subsolution $\phi_-$ satisfying
$\phi_- \le m$.
\end{theorem}
\begin{proof}
We apply  Theorem \ref{theoartisanal1} with the preceding analysis.
\end{proof}

This result is enough for our goals, but to highlight the variety of constructions of barrier functions, we offer an alternative choice for $a$ and $b$ which will allow one to consider the case $R = 0$ and $\tau = 0$ on a compact $K \subset M$ with $\tau^2>0$ on $M\backslash K$ and thus to shift the constraint onto the magnetic datum $\epsilon_3$.

Assume $R_\gamma\le 0$, $a= 2  \epsilon_3$, $b =\frac{n-1}{n} \tau^2$ and that $\varphi_-$ is a positive solution of 
\begin{equation} \label{solRtaubis} a_n \Delta_\gamma \varphi_-+2 \epsilon_3 \varphi_- - \frac{n-1}{n}\tau^2 \varphi_-^{\frac{n+2}{n-2} }=0,
\end{equation}
and let $\phi_- \doteq \kappa \varphi_-$.
Then, if in addition $R_\gamma\le 0$
$$\begin{aligned} 
\mathcal{H}(\phi_-) &\ge a_n\Delta_\gamma \phi_- +2 \epsilon_3\phi_-^{\frac{n-6}{n-2}}- \frac{n-1}{n} \tau^2 \phi_-^{\frac{n+2}{n-2} } \\
&\ge \frac{n-1}{n}\tau^2 \varphi_-^{\frac{n+2}{n-2} } \kappa \left( 1- \kappa^{\frac{4}{n-2}} \right)  +2 \epsilon_3 \kappa  \varphi_-\left(\left(\kappa \varphi_-\right)^{-\frac{4}{n-2}} -1 \right) \ge 0
\end{aligned}$$
for $\kappa \le \min \left(1, \frac{1}{\sup \varphi_-} \right)$. Thus, if there exists a positive solution to \eqref{solRtaubis} which is bounded from above,  one can find a strictly positive subsolution to \eqref{systoncomplphiX}, as small as required.  Applying Theorem \ref{theoartisanal1} then yields an equivalent to Theorem \ref{theosetup1}, replacing the bound on the scalar curvature, and the operator $-a_n\Delta_\gamma+ R_\gamma$ by $-a_n\Delta_\gamma +2\epsilon_3$ in the spectral hypotheses, and with an additional control $\epsilon_3\le A$ for a positive constant $A$.

\subsection{Existence results}

We can now build global barrier functions and obtain existence results for equation \eqref{systoncomplphiX} on a complete manifold of bounded geometry. Our first one results entirely from the construction in Section \ref{subsecbarrfunct}.

\begin{theorem}
Let $(M^n, \gamma)$ be a smooth complete Riemannian manifold of bounded geometry,  $n\ge3$, $p>n$. We make the following assumptions:
\begin{equation} \label{HYP1} R_\gamma, \epsilon_1, \epsilon_2, \epsilon_3, \vert U \vert^2,  \tau^2 \in L^p_{\mathrm{loc}}(M) \text{ and }  \omega, \nabla \tau \in L^{2}(M) \cap L^p(M). \end{equation}
\begin{equation}
\label{HYP2}
 \lambda_{1,\mathrm{conf}} >0,
\end{equation}
\begin{equation}
\label{HYP3}
a \doteq R_\gamma + \frac{n-1}{n} \tau^2 \in L^\infty (M), \,  \, a \ge a_0 >0. 
\end{equation}
Assume further that:
\begin{equation}
\label{HYP4}
\left\{ \begin{aligned}
&\epsilon_2 + \epsilon_3>0  \text{ if } n \le 6\\
& \epsilon_2 >0 \text{ if } n >6.
\end{aligned} \right.
\end{equation}

Then, there exists $C(n,M, \gamma, \lambda_{1, \mathrm{conf}})$  such that if
\begin{equation}
\label{HYP5}
\begin{aligned}
\left\vert  R_\gamma \right\vert +& \max \left( \| d\tau \|_{L^{2}(M)}, \| d\tau \|_{L^{p}(M)}\right)  + \max \left( \| \omega_1 \|_{L^{2}(M)},\| \omega_1 \|_{L^{p}(M)}\right) \\
&+ \max\left(\| \omega_2 \|_{L^{2}(M)}, \| \omega_2 \|_{L^{p}(M)} \right) +\vert U \vert+ \epsilon_1 + \epsilon_2 + \epsilon_3  \le C \tau^2,
\end{aligned}
\end{equation}
then  \eqref{systoncomplphiX} admits a $W^{2,p}_{\mathrm{loc}}$ solution. Moreover, if in either condition of \eqref{HYP4} there is a uniform lower bound $\varepsilon_0>0$, then the physical metric $g= \phi^{\frac{4}{n-2}} \gamma$ is also complete.
\end{theorem}
\begin{proof}
Hypotheses \eqref{HYP1}, \eqref{HYP2}  allow one to use Theorem \ref{050720211748bis} and conclude that if there exists global barrier functions, system  \eqref{systoncomplphiX} admits a $W^{2,p}_{\mathrm{loc}}$ solution. Then, hypothesis \eqref{HYP5} \emph{in bounded geometry} corresponds to our construction of a global supersolution with $\Lambda_+ = a$ (and $c_+$ arbitrary) (see Lemma \ref{phi+isasupersolution}), while \eqref{HYP4} allows for the construction of a compatible subsolution (see Lemma \ref{phi-isasubsolution}) with  $\Lambda_- \le \frac{1}{2} \left( \epsilon_2 + \epsilon_3 \right)$ or $\frac{1}{2} \epsilon_3$ (while \eqref{HYP5} ensures that $\Lambda_- \le \Lambda_+$ if $C(n,M, \gamma, \lambda_{1, \mathrm{conf}})\ge 1$).

In the end, the global barrier functions ensure the existence of a solution to the constraint equations.

If, in addition there is a uniform lower bound $\varepsilon_0>0$ in \eqref{HYP4}, $\Lambda_- = c_-a$ is admissible for $c_-$ small enough  by \eqref{290720211028}, and thus thanks to Corollary \ref{corSupersolUpperBoundbis} the conformal factor $\phi$ is uniformly bounded from below and above by strictly positive constants. Then 
the distance function for $g$ will satisfy $c_1d_{\gamma}(p,q)\leq d_{g}(p,q)\leq c_2d_{\gamma}(p,q)$ showing that $g$ is also complete.
\end{proof}

We can deal with the vacuum case under different hypotheses:
\begin{theorem}
Let $(M, \gamma)$ be a smooth complete Riemannian manifold of bounded geometry, let $n\ge 3$ be its dimension and $p>n$. We make the following assumptions:
\begin{equation} \label{HYP1b} R_\gamma, \tau \in C^{0,\alpha}_{\mathrm{loc}} (M) \cap  L^p_{\mathrm{loc}}(M), \epsilon_1, \epsilon_2, \epsilon_3,\vert U\vert^2,  \tau^2 \in L^p_{\mathrm{loc}}(M) \text{ and  } \omega, \nabla \tau \in L^{2}(M) \cap L^p(M). \end{equation}
\begin{equation}
\label{HYP2b}
 \lambda_{1,\mathrm{conf}} >0,
\end{equation}
\begin{equation}
\label{HYP3b}
a \doteq R_\gamma + \frac{n-1}{n} \tau^2 \in L^\infty (M), \,  \, a \ge a_0 >0. 
\end{equation}
Assume further that:
\begin{equation}
\label{HYP4b}
 \begin{aligned}
& \mathrm{Ric}_{\gamma} \ge -(n-1) H^2(1+r^2),\\
& R_\gamma \ge -A, \\
& \vert \tau \vert \ge B >0 \text{ outside a compact set},
& \lambda_1^{-a_n\Delta_\gamma + R_\gamma}(B_0)>0 \\
& \lambda_1^{-a_n\Delta_\gamma +  R_\gamma}(M)<0,
\end{aligned} 
\end{equation}
where $B_0= \{ x \in M \, : \, \tau(x)=0 \}.$
Then, there exists $C(n,M, \gamma, \lambda_{1, \mathrm{conf}})$  such that if
\begin{equation}
\label{HYP5b}
\begin{aligned}
\left\vert R_\gamma \right\vert +& \max \left( \| d\tau \|_{L^{2}(M)}, \| d\tau \|_{L^{p}(M)}\right)  + \max \left( \| \omega_1 \|_{L^{2}(M)},\| \omega_1 \|_{L^{p}(M)}\right) \\
&+ \max\left(\| \omega_2 \|_{L^{2}(M)}, \| \omega_2 \|_{L^{p}(M)} \right) +\vert U \vert+ \epsilon_1 + \epsilon_2 + \epsilon_3  \le C \tau^2,
\end{aligned}
\end{equation}
then  \eqref{systoncomplphiX} admits a $W^{2,p}_{\mathrm{loc}}$ solution.
\end{theorem}
\begin{proof}


This follows from Theorem \ref{050720211748bis} with the existence of barrier functions obtained by Lemma \ref{phi-isasubsolution} and  Theorem \ref{theosetup1}.
\end{proof}

Here we cannot a priori grant that the solution is complete. An interesting program would be to analyse what conditions are required to obtain a lower bound on the associated subsolution, and if the bounded geometry context is enough for the Yamabe type subsolution to be uniformly bounded from below.

Another relevant question lies with the uniqueness of the solutions to the Einstein Constraint Equations. Since our construction scheme relies on a Schauder fixed point theorem, several solutions may coexist on compacts, compounded by the diagonal extraction scheme which may lead to different solutions contained between $\phi_-$ and $\phi_+$ with different extractions. In a fixed model at infinity (for instance AE or AH) it would be interesting to consider whether restricting  the possible range at infinity (by requiring that $\phi_{\pm}$ converge toward the same limit at infinity) would provide further information concerning uniqueness.

\begin{appendices}

\section{Linear analysis of some elliptic boundary value problems}
\label{appendixlinearanalysisellipticboundaryvalueproblem}

Let us consider a compact manifold $M^n$, $n\geq 3$, with smooth boundary $\Sigma$ and a Riemannian metric $g\in W^{2,q}(M)$ with $q>n$, and the operator
\begin{align}\label{MixedBVPRough}
\begin{split}
\mathcal{P}_1:W^{2,q}(M)&\to L^q(M)\times W^{2-\frac{1}{q},q}(\Sigma),\\
u&\mapsto (\Delta_gu,B_0u),
\end{split}
\end{align}
where $B_0:W^{2,q}(M)\to W^{2-\frac{1}{q},q}(\Sigma)$ is the trace map, $u\mapsto u|_{\Sigma}$, which is a bounded map. Then, the following estimate holds:
\begin{proposition}
\label{propsemifradholm}
Let $g$ be a $W^{2,q}$-Riemannian metric on $M$ compact, $q>n$. Then, there is a constant $C=C(M,g,q)>0$ such that the following estimate holds for all $u\in W^{2,q}(M)$:
\begin{align}\label{MaxwellsEstimate.1}
\Vert u\Vert_{W^{2,q}(M)}\leq C\left( \Vert \Delta_gu\Vert_{L^q(M)} + \Vert B_0u\Vert_{W^{2-\frac{1}{q},q}(\Sigma)} + \Vert u\Vert_{L^q(M)} \right).
\end{align}
Moreover, the map $\mathcal{P}_1$ given by (\ref{MixedBVPRough}) is a semi-Fredholm map.
\end{proposition}
\begin{proof}
Under our conditions (\ref{MaxwellsEstimate.1}) directly follows from the same computations as in \cite[Proposition 4]{Maxwell1}. Once this estimate is established, then the semi-Fredholm claim follows directly since it is equivalent to the validity of the estimate (see, for instance, \cite[Chapter 2, Lemma 5.1]{LionsMagenes1}). 
\end{proof}

\begin{cor}\label{RoughLapBelMBVPFred}
Let $g$ be a $W^{2,q}$-Riemannian metric on $M$ compact, $q>n$. Then, the map $\mathcal{P}_1$ given by (\ref{MixedBVPRough}) is a Fredholm map of index zero.
\end{cor}
\begin{proof}
First, notice that if $g\in C^{\infty}(M)$ then $\mathcal{P}_1$ is an isomorphism when acting on $W^{2,2}(M)\to L^2(M)\times H^{\frac{3}{2}}(\Sigma)$ by very standard arguments, such as those of \cite[Theorems 8.9 and Theorem 8.12]{bibellipticpartialdifferentialequations}. Moreover, by \cite[Chapter 5, Theorem 5.4.5]{TriebelBook}, the Fredholm index of $\mathcal{P}_1:W^{2,p}(M)\to L^p(M)\times W^{2-\frac{1}{p},p}(\Sigma)$ is independent of $1<p<\infty$, and thus it must be zero for all such $p$, since we know it is for $p=2$. Finally, in the case where $g\in W^{2,q}(M)$, approximating it by smooth metrics yields a sequence for Fredholm operators of index $0$ converging towards $\mathcal{P}_1$. Since $\mathcal{P}_1$ is semi-Fredholm by Proposition \ref{propsemifradholm} and the index of a semi-Fredholm operator is locally constant by \cite[Chapter XIX, Thm 19.1.5]{Hormander3}, then the index of $\mathcal{P}_1$ is zero as well, and thus $\mathcal{P}_1$ is Fredholm.

\end{proof}


Let us now highlight that $\mathcal{P}_1$ satisfies the following maximum principle:
\begin{proposition}\label{MaxPrinciplesLaplace}
Let $g$ be a $W^{2,q}$-Riemannian metric on $M$ compact, $q>n$, and let $a\in L^q(M)$ be such that $a\geq 0$ \emph{a.e}. If $u\in W^{2,q}(M)$ satisfies $\Delta_gu-au\leq 0$ on $M$ and $B_0u\geq 0$ along $\Sigma$, then $u\geq 0$. Moreover, if $B_0u>0$, then $u>0$.
\end{proposition}
\begin{proof}
Let $\epsilon>0$ and consider $u^{-}(p)\doteq \min\{0,u(p)+\epsilon\}\in W^{1,q}(M)$. Since $B_0u^{-}=0$, then actually $u^{-}\in W^{1,q}_0(M)$ by \cite[Theorem 7.53]{Adams}.\footnote{Notice that $W^{1,q}_0(M)$ denotes the closure of $C^{\infty}_0(\overset{\circ}{M})$ with respect to the $W^{1,p}(M)$-norm, where $\overset{\circ}{M}$ denotes the interior of $M$.} Moreover, since $q>n\geq 3$, then $u^{-}\in W^{1,q}(M)\hookrightarrow W^{1,2}(M)\hookrightarrow L^{q'}(M)$, where the last inclusion holds by the Sobolev inequality, since $q'\leq \frac{2n}{n-2}\Longleftrightarrow q\geq \frac{2n}{n+2}$, while $q>n\geq \frac{2n}{n+2}$ for all $n\geq 3$. Then $u^{-}(\Delta_gu-au)\in L^1(M)$ and
\begin{align}\label{PoissonInjectivity.1}
u^{-}(\Delta_gu-au)\geq 0,
\end{align}
which we intend to integrate over $M$. In order to do this, using $u^{-}\in W_0^{1,2}(M)$ and $u\in W^{2,q}(M)$, consider sequences $\{u^{-}_k\}_{k=1}^{\infty}\subset C^{\infty}_0(\overset{\circ}{M})$ and $\{u_k\}_{k=1}^{\infty}\subset C^{\infty}(M)$ such that
\begin{align*}
u^{-}_k\xrightarrow[k\to\infty]{W^{1,2}(M)} u^{-} \text{ and } u_k\xrightarrow[k\to\infty]{W^{2,q}(M)} u,
\end{align*}
which allows one to compute
\begin{align*}
\int_Mu^{-}\Delta_gu dV_g&=\lim_{k\to\infty}\int_Mu^{-}_k\Delta_gu_k dV_g=-\lim_{k\to\infty}\int_M g(\nabla u^{-}_k,\nabla u_k) dV_g=-\int_M g(\nabla u^{-},\nabla u) dV_g,
\end{align*}
where in the integration by parts we have appealed to the compact support of $u^{-}_k$ in $\overset{\circ}{M}$ to avoid boundary terms, and limits can be justified easily via triangle and Hölder inequalities. Thus, we find:
\begin{align*}
\int_M(u^{-}(\Delta_gu-au))dV_g=-\int_M (g(\nabla u^{-},\nabla u) + a u^{-}u) dV_g.
\end{align*}
Consider the open subset of $M$ with $u^{-}\neq 0$, given by $\Omega^{-}=\{ p\in M\: :\: u^{-}(p)<0 \}$. Then, $u^{-}(p)=u(p)+\epsilon<0$ and $\nabla u^{-}=\nabla u$ on $\Omega^{-}$. Also, $u(p)=u^{-}(p)-\epsilon\leq u^{-}(p)$ on $\Omega^{-}$, and thus $(u^{-}u)(p)\geq (u^{-}(p))^2$ on $\Omega^{-}$. Thus, 
\begin{align*}
0\leq \int_M(u^{-}(\Delta_gu-au))dV_g\leq -\int_M (|\nabla u^{-}|_g^2 + a (u^{-})^2 )  dV_g\leq 0,
\end{align*}
where in the first inequality we have used (\ref{PoissonInjectivity.1}). The above clearly implies that 
\begin{align*}
\int_M\left(|\nabla u^{-}|^2_g+a(u^{-})^2\right)dV_g=0.
\end{align*}
Thus, $\Vert \nabla u^{-}\Vert_{L^2(M)}=0$, which together with $B_0u^{-}=0$ implies $u^{-}\equiv 0$. That is, $u\geq -\epsilon$ for all $\epsilon>0$, which establishes the result. Finally, if $B_0u>0$, the possibility of an interior minimum is not allowed by an application of the Harnack inequality, just as in \cite[Lemma 4]{Maxwell1}.
\end{proof}

Finally, let us show that $\mathcal{P}$ belongs to slightly broader family of operators for which the boundary value problem (\ref{MixedBVPRough}) is uniquely solvable:
\begin{theorem}\label{ShiftedRoughLapBelIso}
Let $g$ be a $W^{2,q}$-Riemannian metric on $M$ compact, $q>n$, and let $a\in L^q(M)$ be such that $a\geq 0$ \emph{a.e}. Then, the operator $\mathcal{P}_{1,a}:W^{2,q}(M)\to L^q(M)\times W^{2-\frac{1}{q},q}(\Sigma)$, given by $\mathcal{P}_{1,a}(u)=(\Delta_gu-au,B_0u)$, is an isomorphism.
\end{theorem}
\begin{proof}
Since $\mathcal{P}_{1,a}$ is a compact perturbation of $\mathcal{P}_1$, then it is still a Fredholm operator of index zero and we need only show injectivity. Thus, consider $u\in W^{2,q}(M)$ such that $\mathcal{P}_{1,a}(u)\equiv 0$ and notice that Proposition \ref{MaxPrinciplesLaplace} applied both to $u$ and $-u$ gives that $u\equiv 0$ and establishes the injectivity.
\end{proof}

\medskip
Let us now consider the case of the conformal Laplacian operator with Dirichlet boundary conditions and analyse its mapping properties as above. That is, still considering a compact manifold $M^n$, $n\geq 3$, with smooth boundary $\Sigma$ and a Riemannian metric $g\in W^{2,q}(M)$ with $q>n$, we now consider the operator
\begin{align}\label{BVPConfLapRough}
\begin{split}
L:W^{2,q}(M)&\to L^q(M)\times W^{2-\frac{1}{q},q}(\Sigma),\\
X&\mapsto (\Delta_{g,\mathrm{conf}}X,B_0X),
\end{split}
\end{align}
Our goal is to understand the mapping properties of (\ref{BVPConfLapRough}), and in particular to establish this is a Fredholm operator of index zero under the above conditions. The strategy to do this follows that of Corollary \ref{RoughLapBelMBVPFred}, and thus one first needs to have these properties established for the case when $g\in C^{\infty}(M)$. In turn, to understand this last case, the first step is to provide these claims in the $L^2$-setting. Thus, whenever $g\in C^{\infty}(M)$, given $1<p<\infty$, it will be convenient to adopt the following notation which highlights the domain of our operator:
\begin{align}\label{BVPConfLapSmooth}
\begin{split}
\mathcal{L}_p:W^{2,p}(M)&\to L^p(M)\times W^{2-\frac{1}{p},p}(\Sigma)\\
X&\mapsto (\Delta_{g,\mathrm{conf}}X,B_0X)
\end{split}
\end{align}
Let us first consider the case $p=2$, where we start by recalling the following standard result:
\begin{theorem}\label{GlobalRegularity}
Let $(M,g)$ be a compact smooth Riemannian manifold with smooth boundary $\Sigma$. If $X\in W^{2,2}(TM)$, then the following implication holds:
\begin{align}\label{GlobalRegularity.1}
\text{ if } k\in\mathbb{N} \text{ and } \mathcal{L}_2X\in W^{k,2}(M;TM)\times W^{k+\frac{3}{2},2}(\Sigma;TM)\Longrightarrow X\in W^{k+2,2}(M;TM),
\end{align}
and there is a constant $C>0$ (depending on $k$) such that
\begin{align}\label{GlobalRegularity.2}
\Vert X\Vert_{W^{k+2,2}(M)}\leq C\left( \Vert \Delta_{g,\mathrm{conf}}X\Vert_{W^{k,2}(M)} + \Vert B_0u\Vert_{W^{k+\frac{3}{2},2}(\Sigma)} + \Vert u\Vert_{L^2(M)} \right).
\end{align}
Moreover, the map $\mathcal{L}_2:W^{2,2}(M)\to L^2(M)\times W^{\frac{3}{2},2}(\Sigma)$ is Fredholm.
\end{theorem}
\begin{proof}
Since the boundary value problem associated with $L$ is regular elliptic (Dirichlet conditions always are) \eqref{GlobalRegularity.1}, \eqref{GlobalRegularity.2} directly follow from, for example, \cite[Chapter 5, Proposition 11.2]{TaylorBook1}. The Fredholm claim follows from \cite[Chapter 5, Proposition 11.16]{TaylorBook1}.
\end{proof}

From the above theorem, for $g\in C^{\infty}(M)$ we have
\begin{align}
\mathrm{Im}(\mathcal{L}_2)=\mathrm{Ker}^{\perp}(\mathcal{L}_2^{*}).
\end{align}
where $\mathcal{L}_2^{*}:L^2(M;TM)\times W^{-\frac{3}{2},2}(\Sigma;TM)\to \left(W^{2,2}(M;TM)\right)'$ is defined by duality as
\begin{align}
[\mathcal{L}_2^{*}(Y,Z)](X)\doteq \langle Y,\Delta_{g, \mathrm{conf}}X\rangle_{L^2(M,dV_g)} + \langle Z,B_0X\rangle_{W^{-\frac{3}{2},2}(\Sigma)\times W^{\frac{3}{2},2}(\Sigma)},
\end{align}
for all $(Y,Z)\in L^2(M;TM)\times W^{-\frac{3}{2},2}(\Sigma;TM)$  and  $X\in W^{2,2}(M;TM)$. A key tool to understand $\mathrm{Ker}(\mathcal{L}_2^{*})$ is the analysis of the formal adjoint problem to (\ref{BVPConfLapSmooth}), which we introduce in terms of the associated Green's identity for $\mathcal{L}_2$, given by
\begin{align}\label{GreensID}
\int_{M}\left( \langle\Delta_{g,\mathrm{conf}}X,Y\rangle_g - \langle\Delta_{g,\mathrm{conf}}Y,X\rangle_g \right)dV_g=\int_{\Sigma}\left( \langle\pounds_{g,\mathrm{conf}}X(\nu,\cdot),Y\rangle_g  -\langle\pounds_{g,\mathrm{conf}}Y(\nu,\cdot),X\rangle_g \right)d\omega_g,
\end{align}
for all $X,Y\in \Gamma(TM)$. 
\begin{definition}
The formal adjoint problem to (\ref{BVPConfLapSmooth}) when $p=2$ is defined to be the boundary value problem defined by the operator $\mathcal{L}_2:C^{\infty}(M)\to C^{\infty}(M)\times C^{\infty}(\Sigma)$. We moreover define the spaces:
\begin{align}\label{Nspace}
N\doteq \{ Y\in C^{\infty}(M)\: :\: \Delta_{g,\mathrm{conf}}Y=0 \text{ and } B_0Y=0\}=\mathrm{Ker}(\mathcal{L}_2).
\end{align}
and
\begin{align}
\mathcal{H}\doteq \{ Z\in W^{2,2}(M;TM)\: :\: \langle Z,Y\rangle_{L^2(M,dV_g)}=0 \: \forall\: Y\in N \}.
\end{align}
\end{definition}
The above definition highlights that, due to (\ref{GreensID}), the boundary value problem associated with $\mathcal{L}_2$ is formally self-adjoint. 

\begin{proposition}\label{RmkGiqcuad}
The space $N$ defined by (\ref{Nspace}) is trivial. That is, $N=\{0\}$.
\end{proposition}
\begin{proof}
This result follows from the proof of \cite[Lemma 6.7]{GicquaudHyperbolic}, which we briefly explain below.

The first step is to notice that if $Y\in N$, then integration by parts of $\langle Y, \Delta_{g,\mathrm{conf}}Y\rangle_g=0$ shows that $\pounds_{g,conf}Y=0$ on $M$ and thus $N$ consists of (smooth) CKFs which vanish along $\Sigma$. The second step is to notice these conditions imply that $\nabla Y|_{\Sigma}=0$ for any $Y\in N$. This follows by considering an arbitrary point $p\in \Sigma$ and a coordinate system $\{U,(y^i,y^n)\}_{i=1}^{n-1}$ around $p$, adapted to the boundary so that $U\cap\Sigma$ agrees with $y^n=0$, and such that $g(\partial_{y^{\alpha}},\partial_{y^{\beta}})|_{p}=\delta_{\alpha\beta}$, $\alpha,\beta=1,\cdots,n$. Since $Y|_{\Sigma}=0$, it directly follows that $\nabla_{\partial_{y^i}}Y|_{p}=0$ for any $i=1,\cdots,n-1$, while in the case of the $y^n$-direction, one can follow  \cite[Lemma 6.7]{GicquaudHyperbolic} to show that $\pounds_{g,conf}Y|_{p}=0$ implies $\nabla_{\partial_{y^n}}Y|_{p}=0$ as well. All this then implies that both $Y,\nabla Y|_{\Sigma}=0$ for any $Y\in N$. One may then construct a collar neighbourhood $\mathcal{C}$ of $M$ extending $M$. Denote the extended (open) manifold by $\tilde{M}$ and equip it with a smooth Riemannian metric $\tilde{g}$ such that $\tilde{g}|_{M}=g$. Given $Y\in N$, extend it to $\tilde{M}$ by zero to produce a vector field $\tilde{Y}\in C^{1}(\tilde{M})$, which by construction satisfies $\pounds_{\tilde{g},conf}\tilde{Y}=0$ and is thus a weak solution to $\Delta_{\tilde{g},\mathrm{conf}}\tilde{Y}=0$, granting by interior regularity that $\tilde{Y}\in C^{\infty}(\tilde{M})$. Since $\tilde{Y}|_{\mathcal{C}}\equiv 0$, one concludes that $\tilde{Y}\equiv 0$ appealing to the proof of \cite[Theorem 4]{Maxwell1}, where a unique continuation result for $W^{2,q}$-CKFs of $W^{2,q}$-metric is established for $q>n$.
\end{proof}

In this context, the following holds:
\begin{proposition}\label{PropLionsExistence}
Let $(M,g)$ be a compact smooth Riemannian manifold with smooth boundary $\Sigma$. If $f\in C^{\infty}(M)$, then there exists a solution $X\in C^{\infty}(M)$ to 
\begin{align}\label{BVProblem.1}
\begin{split}
\Delta_{g,\mathrm{conf}}X&=f \text{ in } M,\\
B_0X&=0 \text{ along } \Sigma,
\end{split}
\end{align}
for all $f\in C^{\infty}(M)$.
\end{proposition}
\begin{proof}
By an adaptation \emph{mutatis mutandis} of \cite[Proposition 5.2]{LionsMagenes1} from the case of scalar valued functions to our setting, one has that there exists a solution $X\in C^{\infty}(M)$ to (\ref{BVProblem.1}) iff $f\in \mathcal{H}$. But Proposition \ref{RmkGiqcuad} grants that $N=\{0\}$ and thus $\mathcal{H}=W^{2,2}(M)$, and thus $f\in \mathcal{H}$ $\forall f\in C^{\infty}(M)$.
\end{proof}

From this, we can establish the following result:
\begin{proposition}\label{PropTrueAdjKer}
Let $(M,g)$ be a compact smooth Riemannian manifold with smooth boundary $\Sigma$ and consider the operator $\mathcal{L}_2$. Then, $\mathrm{Ker}(\mathcal{L}_2^{*})=\{(Y,\pounds_{g,\mathrm{conf}}Y(\nu,\cdot))\in L^2(M)\times W^{-\frac{3}{2},2}(\Sigma)\: :\: Y\in N\}=\{0\}$.
\end{proposition}
\begin{proof}
If $(v,\psi)\in \mathrm{Ker}(\mathcal{L}_2^{*})$, 
by definition one must have
\begin{align}\label{TrueAdjKer.1}
\langle v,\Delta_{g,\mathrm{conf}}u\rangle_{L^2(M,dV_g)}+\langle \psi,B_0u\rangle_{W^{-\frac{3}{2},2}(\Sigma)\times W^{\frac{3}{2},2}(\Sigma)}=0 \:\: \forall\: u\in W^{2,2}(M),
\end{align}
which in particular implies that
\begin{align}\label{TrueAdjKer.2}
\langle v,\Delta_{g,\mathrm{conf}}u\rangle_{L^2(M,dV_g)}=0 \:\: \forall\: u\in W^{2,2}_B(M),
\end{align}
where $W^{2,2}_B(M)\doteq \{u\in W^{2,2}(M) \: :\: B_0u=0\}$. 

Adaptating  \emph{mutatis mutandis} the proof  \cite[Proposition 5.4]{LionsMagenes1}, one can first show that $v$ is smooth and $(v, \psi=\pounds_{g, \mathrm{conf}}v(\nu,.))$, and then that $v\in N$. For reference, one may notice that \eqref{TrueAdjKer.1} and \eqref{TrueAdjKer.2} correspond respectively  to \cite[(5.51)]{LionsMagenes1} and  \cite[(5.52)]{LionsMagenes1}. All this shows that $\mathrm{Ker}(\mathcal{L}_2^{*})=\{(Y,\pounds_{g,\mathrm{conf}}Y(\nu,\cdot))\in L^2(M)\times W^{-\frac{3}{2},2}(\Sigma)\: :\: Y\in N\}$, which combined with Proposition \ref{RmkGiqcuad} finishes the proof.
\end{proof}

The above proposition directly implies:
\begin{cor}
Let $(M,g)$ be a compact smooth Riemannian manifold with smooth boundary $\Sigma$ and consider the operator $\mathcal{L}_2$. Then, $\mathcal{L}_2$ has Fredholm index equal to zero, and is actually an isomorphism
\end{cor}
\begin{proof}
Since $\mathcal{L}_2$ is Fredholm by Theorem \ref{GlobalRegularity}, we know that $\left(\mathrm{Coker}(\mathcal{L}_2)\right)'\cong \mathrm{Ker}(\mathcal{L}_2^{*})$ and $\mathrm{Index}(\mathcal{L}_2)=\mathrm{dim}(\mathrm{Ker}(\mathcal{L}_2)) - \mathrm{dim}(\mathrm{Ker}(\mathcal{L}_2^{*}))=0$, where the last equality follows from Propositions \ref{RmkGiqcuad} and \ref{PropTrueAdjKer}. Moreover, $\mathrm{Im}(\mathcal{L}_2)=\mathrm{Ker}^{\perp}(\mathcal{L}_2^{*})$, which shows that $\mathcal{L}_2$ is surjective since $\mathrm{Ker}(\mathcal{L}_2^{*})=\{0\}$. Since $\mathcal{L}_2$ is also injective by Proposition \ref{RmkGiqcuad}, this establishes the isomorphism claim.
\end{proof}

Having the above results for $\mathcal{L}_2$, let us now analyse the case of $\mathcal{L}_p$, still for $g\in C^{\infty}(M)$. 

\begin{proposition}\label{ConfLapFredholmSmoothCase}
Let $g$ be a smooth Riemannian metric on $M$ compact. Then, the map $\mathcal{L}_p$ is a Fredholm map for any $1<p<\infty$ and its index is independent of $p$ given by $\mathrm{Index}(\mathcal{L}_p)=\mathrm{Index}(\mathcal{L}_2)=0$.
 In particular, $\mathcal{L}_p$ is an isomorphism for any such $1<p<\infty$.
\end{proposition}
\begin{proof}
First of all, in the case that $g\in C^{\infty}(M)$, the elliptic estimates (\ref{BVPConfLapRough}) hold for $\mathcal{L}_p$ in their $L^p$-version for all $1<p<\infty$, and thus $\mathcal{L}_p$ is semi-Fredholm.\footnote{This follows by the classical freezing of coefficients technique since the associated estimates hold for the frozen coefficient operators, as outlined for instance in \cite[Proposition 4]{Maxwell1}.} Also, appealing to a classical difference quotients method one obtains an elliptic regularity statement showing that $\mathrm{Ker}(\mathcal{L}_p)=\mathrm{Ker}(\mathcal{L}_2)$ is independent of $p$, and is thus always trivial by Proposition \ref{RmkGiqcuad}. Thus, being $\mathcal{L}_p:W^{2,p}(M)\to L^p(M)\times W^{2-\frac{1}{p},p}(\Sigma)$ Fredholm and injective, there is an estimate of the form\footnote{See, for instance, the proof of the estimate \cite[(19.1.2)]{Hormander3}.}
\begin{align}\label{InjectivityEstimate}
\Vert u\Vert_{W^{2,p}(M)}\leq C\left(\Vert \Delta_{g,conf}u\Vert_{L^p(M)} + \Vert u\Vert_{W^{2-\frac{1}{p},p}(\Sigma)}\right), \:\: \forall\: u\in W^{2,p}(M),
\end{align}
for a fixed given constant $C>0$. Then, given $f\in L^p(M)$ and a sequence $\{f_j\}_{j=1}^{\infty}\subset C^{\infty}(M)$ such that $f_j\xrightarrow[]{L^p(M)}f$, by Proposition \ref{PropLionsExistence} we know there exists $\{u_j\}_{j=1}^{\infty}\subset C^{\infty}(M)$ such that $\mathcal{L}_pu_j=(f_j,0)$. Then, by (\ref{InjectivityEstimate}), we have:
\begin{align*}
\Vert u_j-u_l\Vert_{W^{2,p}(M)}\leq C \Vert f_j-f_l\Vert_{L^p(M)},
\end{align*} 
and therefore we see that $\{u_j\}_{j=1}^{\infty}$ is convergent in $W^{2,p}(M)$. Denoting the limit by $u\in W^{2,p}(M)$, we find $\mathcal{L}_pu=(f,0)$. 

Finally, if $(f,\varphi)\in L^p(M)\times W^{2-\frac{1}{p},p}(\Sigma)$, consider the problem $\mathcal{L}_p\tilde{u}=(f-\mathcal{L}_p\mathcal{E}\varphi,0)\in L^p(M)\times W^{2-\frac{1}{p},p}(\Sigma)$, where $\mathcal{E}:W^{2-\frac{1}{p},p}(\Sigma)\to W^{2,p}(M)$ denotes a continuous extension operator. By the previous analysis we know that there exists a unique solution $\tilde{u}\in W^{2,p}(M)$ to this problem, and then $u\doteq \tilde{u}+\mathcal{E}\varphi\in W^{2,p}(M)$ solves $\mathcal{L}_pu=(f,\varphi)\in L^p(M)\times W^{2-\frac{1}{p},p}(\Sigma)$, since by construction $B_0\tilde{u}=0$ and $B_0\mathcal{E}\varphi=\varphi$. This shows that $\mathcal{L}_p:W^{2,p}(M)\to L^p(M)\times W^{2-\frac{1}{p},p}(\Sigma)$ is indeed surjective and thus an isomorphism.

\end{proof}

Having now dealt with the case of smooth metrics, let us now treat the following case for lower regularity coefficients:
\begin{proposition}\label{CKLFredholmProp}
Let $g$ be a $W^{2,q}$-Riemannian metric on $M^n$ compact, $q>n$. Then, there is a constant $C=C(M,g,q)>0$ such that the following estimate holds for all $u\in W^{2,q}(M)$:
\begin{align}\label{MaxwellsEstimate.2}
\Vert X\Vert_{W^{2,q}(M)}\leq C\left( \Vert \Delta_{g,conf}X\Vert_{L^q(M)} + \Vert B_0X\Vert_{W^{2-\frac{1}{q},q}(\Sigma)} + \Vert X\Vert_{L^q(M)} \right).
\end{align}
Moreover, the map $L$ given by (\ref{BVPConfLapRough}) is a Fredholm map of index zero.
\end{proposition}
\begin{proof}
Under our hypotheses, (\ref{MaxwellsEstimate.2}) directly follows from the same computations as in \cite[Proposition 4]{Maxwell1}, and then $L$ is semi-Fredholm, for instance, due to \cite[Chapter 2, Lemma 5.1]{LionsMagenes1}. Finally, to obtain the Fredholm and index claims we follow the same strategy as in Corollary \ref{RoughLapBelMBVPFred} approximating $g$ in $W^{2,q}(M)$ by smooth metrics so that $L$ is approximated in the operator norm by operators $\{\mathcal{L}^{(j)}_p\}_{j=1}^{\infty}$ to which Proposition \ref{ConfLapFredholmSmoothCase} applies. Then, just as in Corollary  \ref{RoughLapBelMBVPFred}, the stability properties of Fredholm operators give both claims.
\end{proof}

Finally, we can prove the following isomorphism result for $\Delta_{g,\mathrm{conf}}$:
\begin{theorem}\label{CKLIsoThm}
Let $g$ be a $W^{2,q}$-Riemannian metric on $M^n$, a compact manifold with smooth boundary $\Sigma$, $q>n$. Then, $L:W^{2,q}(M)\to L^q(M)\times W^{2-\frac{2}{q},q}(\Sigma)$ is an isomorphism.
\end{theorem}
\begin{proof}
Since by Proposition \ref{CKLFredholmProp} $L$ is Fredholm of index zero, then if it is injective it must be surjective as well. But following the same ideas as in the proof of Proposition \ref{RmkGiqcuad}, if $X\in \mathrm{Ker}(L)$, integrating by parts the equation $\langle X,\Delta_{g,\mathrm{conf}}X\rangle_g=0$ and using that $B_0X=0$ grants that $X$ is a $W^{2,q}(M)$-CKF of $g$. But then the same arguments as in proof of Proposition \ref{RmkGiqcuad} apply to show that $X\equiv 0$, and hence $L$ is injective, establishing the proof.


\end{proof}

\section{Manifolds of bounded geometry}\label{BoundedGeometry}

In order to apply elliptic estimates to complete manifolds, we will need to work in the bounded geometry context.  To suit these needs, we here recall notions of bounded geometry taken from Shubin's \cite{Shubin} (see also \cite[Exposé V]{Shubin2}).

\begin{definition}
We will say $(M, \gamma)$ is a (smooth) manifold of bounded geometry if:
\begin{enumerate}
\item $r_{inj} >0,$
\item $\left\vert \nabla^k \mathrm{Riem}_\gamma \right\vert_\gamma \le C_k $ for all $k \in \mathbb{N} $,
\end{enumerate}
where $r_{inj}$ stands for the injectivity radius of $(M, \gamma)$ and $C_k$ for constants depending on $k$.
\end{definition}

Given a point $x\in M$ and $r\in (0,r_{inj})$ we have normal coordinate systems given by $\exp_x:B_r(x)\to U_{x,r}\subset \mathbb{R}^n$. Such coordinate systems will be called \textbf{canonical}. The second condition above guarantees that the transition matrices (together with their derivatives up to any order) between such coordinate systems are bounded. 

Let $E\to M$ be a vector bundle over $M$. We say that \textbf{$E$ is a bundle of bounded geometry} if trivializations of $E$ over canonical coordinate systems $U,U'$, with $U\cap U'\neq \emptyset$, give rise to transition functions $g_{UU'}$ such that $\partial^{\alpha}_yg_{UU'}(y)$ are bounded by constants $C_{\alpha}$ which do not depend on the pair $U,U'$. Tensor bundles over manifolds of bounded geometry are bundles of bounded geometry.

In the above context we can find countable coverings with finite multiplicity following  from \cite[Lemma 1.2]{Shubin}.

\begin{lem}
\label{coveringwithboundedmultiplicity}
Let $(M,\gamma)$ be a manifold of bounded geometry.
Then there exists $\epsilon_0>0$ with $\epsilon_0 < \frac{1}{3} r_{\mathrm{inj}}$ such that if $\epsilon \in (0, \epsilon_0)$ then there exists a countable covering of $M$ by balls of radius $\epsilon$ $X = \bigcup B(x_i, \epsilon)$ such that the covering of $M$ by the balls $B(x_i, 2\epsilon)$ has a finite multiplicity.
\end{lem}
On these kinds of coverings one can then define bounded partitions of unity \cite[Lemma 1.3]{Shubin}.
\begin{lem}\label{Partitionofunity}
Let $(M,\gamma)$ be a manifold of bounded geometry and fix $\epsilon<\frac{r}{2}$ with $r\in (0,r_{inj})$. Then, there exists a sequence of points $\{x_i\}_{i=1}^{\infty}\subset M$ such that $M=\cup_{i}B_{\epsilon}(x_i)$ and a partition of unity $1=\sum_{i=1}^{\infty}\varphi_i$ on $M$ such that:\\
\noindent 1) $\varphi_i\geq 0$, $\varphi_i\in C^{\infty}_0(M)$ with $\mathrm{supp}(\varphi_i)\subset B_{2\epsilon}(x_i)$;\\
\noindent 2) $\vert \partial^{\alpha}_y\varphi_i(y) \vert \leq C_{\alpha}$,
for every multiindex $\alpha$ in canonical coordinates uniformly with respect to $i$ (\textit{i.e} with the constants $C_{\alpha}$ which does not depend on $i$).
\end{lem}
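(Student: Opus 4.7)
The plan is a standard three-step construction: (i) extract a maximal $\epsilon$-separated set of points, (ii) bound the multiplicity of the induced double-radius cover by a constant independent of the base point, and (iii) glue uniformly controlled bump functions in canonical charts, then normalize.

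For step (i), apply Zorn's lemma (or a greedy exhaustion using separability of $M$) to produce a maximal subset $\{x_i\}_{i=1}^\infty\subset M$ with $d_\gamma(x_i,x_j)\ge \epsilon$ for $i\ne j$. Maximality forces $M=\bigcup_i B_\epsilon(x_i)$: otherwise one could adjoin a further point at distance $\ge\epsilon$ from all the $x_i$'s, contradicting maximality. Since $2\epsilon<r<r_{\mathrm{inj}}$, each ball $B_{2\epsilon}(x_i)$ is the diffeomorphic image of a Euclidean ball under $\exp_{x_i}$, hence lies inside a canonical chart.

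For step (ii), I claim there exists $N=N(n,\epsilon,r,C_0)$ (with $C_0$ the bound on $|\mathrm{Riem}_\gamma|$) such that every $y\in M$ belongs to at most $N$ of the balls $B_{2\epsilon}(x_i)$. Indeed, if $y\in B_{2\epsilon}(x_{i_1})\cap\cdots\cap B_{2\epsilon}(x_{i_k})$, then the $x_{i_j}$'s all lie in $B_{2\epsilon}(y)$ and are pairwise $\epsilon$-separated, so the disjoint balls $B_{\epsilon/2}(x_{i_j})$ are all contained in $B_{5\epsilon/2}(y)$. Using the Bishop–Gromov volume comparison, which is applicable thanks to the uniform lower Ricci bound coming from bounded geometry, one has a uniform lower bound on $\mathrm{Vol}(B_{\epsilon/2}(x_{i_j}))$ and a uniform upper bound on $\mathrm{Vol}(B_{5\epsilon/2}(y))$, from which $k\le N$ follows. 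This locally uniform finiteness is exactly what is needed later to preserve uniform derivative bounds under normalization.

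For step (iii), fix once and for all a smooth function $\chi\colon\R^n\to[0,1]$ with $\chi\equiv 1$ on the Euclidean ball of radius $\epsilon$ and $\mathrm{supp}(\chi)$ contained in the Euclidean ball of radius $\tfrac{3\epsilon}{2}$. Define $\tilde\varphi_i\doteq \chi\circ \exp_{x_i}^{-1}$ on $B_{2\epsilon}(x_i)$ and extend by $0$. Because bounded geometry gives, in each canonical chart, uniform $C^k$-control of the metric coefficients (and hence of $\exp_{x_i}^{-1}$ as read against any neighboring canonical chart), the chain rule yields $|\partial^\alpha_y \tilde\varphi_i(y)|\le C_\alpha$ in canonical coordinates, with $C_\alpha$ independent of $i$. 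Now set
\[
\varphi_i\doteq \frac{\tilde\varphi_i}{\sum_{j}\tilde\varphi_j}.
\]
The denominator is pointwise $\ge 1$ because each $y$ lies in some $B_\epsilon(x_i)$, where $\tilde\varphi_i\equiv 1$; the uniform multiplicity bound from step (ii) implies the sum is actually a sum of at most $N$ terms in any chart, and each summand has uniformly controlled derivatives, so the denominator lies in a fixed range $[1,N]$ and its $C^k$-norms in canonical charts are uniformly bounded. Consequently the quotient $\varphi_i$ inherits the uniform estimates on $|\partial^\alpha_y\varphi_i|\le C_\alpha$ in canonical coordinates, giving (2). Properties (1) are immediate from the construction.

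The main obstacle is step (ii): all the uniformity in step (iii) depends on knowing a priori that only finitely many (uniformly so) of the chopped bump functions contribute at any point. This is where the bounded-geometry hypothesis enters crucially, through the two-sided volume control of small metric balls.
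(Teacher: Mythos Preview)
Your argument is the standard construction and is essentially correct; the paper itself does not prove this lemma but simply imports it from Shubin's work, so there is no ``paper's proof'' to compare against beyond noting that your three-step outline (maximal $\epsilon$-net, uniform multiplicity, normalized bumps) is exactly the classical route.

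One small imprecision in step (ii): Bishop--Gromov with a lower Ricci bound yields \emph{upper} bounds on ball volumes, not lower bounds, so your sentence ``one has a uniform lower bound on $\mathrm{Vol}(B_{\epsilon/2}(x_{i_j}))$'' is not justified by Bishop--Gromov alone. The correct way to get the lower bound here is to use the full bounded-geometry hypothesis: since $5\epsilon/2<r_{\mathrm{inj}}$ and the curvature is uniformly bounded, the metric in any canonical chart satisfies $c^{-1}\delta_{ij}\le g_{ij}\le c\,\delta_{ij}$ for a uniform $c$, whence $\mathrm{Vol}(B_\rho(x))$ is uniformly comparable to $\omega_n\rho^n$ for $\rho\le 5\epsilon/2$. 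This gives both the upper bound on the big ball and the lower bound on the small balls simultaneously, and the packing count $k\le N$ follows. With that fix, the proof stands.
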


Using such a partition of unity, we define the Sobolev spaces $W^{k,p}$, with $k\in \mathbb{N}_0$ and $1\leq p<\infty$ as the closure of $C^{\infty}_0$ with respect to the norm
\begin{align}
\label{Wspdef}
\Vert u\Vert^p_{W^{k,p}(M)}\doteq \sum_{i=1}^{\infty}\Vert \varphi_iu\Vert^p_{W^{k,p}(B_{2\epsilon}(x_i))},
\end{align}
where the spaces $W^{k,2}(M)$ have a Hilbert structure. 
It is interesting to consider whether this definition depends on the covering and the subordinate partition of unity. We refer the interested reader to \cite[Definition 5.10, Theorem 5.11]{https://doi.org/10.1002/mana.201300007} for a discussion on when coverings and partitions are compatible. In particular, geodesic refinements are compatible.

In addition, in the case $k=0$ one can check that the norm defined above is an equivalent norm to the one introduced by considering the space $L^p(M,dV_g)$ as induced by the volume measure related to $(M,g)$.
 We will use both characterisations at different moments, appealing to the one most useful for our pourposes. Also, we notice that the usual Sobolev embeddings and multiplication properties hold in this context (see  \cite[Corollary 3.19]{MR1481970}, or \cite[last paragraph p. 68]{Shubin}). Due to the abundant use of this fact throughout this paper, we will single them out in the following proposition.
\begin{proposition}
\label{SobEstBoundGeom}
Let $(M, \gamma)$ be a complete manifold of bounded geometry. If $1<p<\infty$ and $k>0$ an integer, then:
$$\begin{aligned}
&W^{k,p}(M) \hookrightarrow L^{\frac{np}{n-kp}}(M) \text{ if } k < \frac{n}{p} \\
&W^{k,p}(M) \hookrightarrow C^{k-\left\lfloor \frac{n}{p}\right\rfloor-1, \alpha}(M) \text{ if } k> \frac{n}{p}.
\end{aligned}$$
\end{proposition}
One could write a short proof of the previous proposition appealing to (\ref{Wspdef}), the corresponding local embeddings, the finite multiplicity of our special covering provided by Lemma \ref{Partitionofunity} and embeddings for summable sequence spaces $\ell^p$. For instance, regarding the first of the above inequalities, the classical embeddings apply on each $B_{2\epsilon}(x_i)$, with a uniform constant in $i$.  Equality \eqref{Wspdef} combined with an embedding $\ell^p \subset \ell^{\frac{np}{n-kp}}$ for the sequence $\Vert \varphi_iu\Vert_{W^{k,p}(B_{2\epsilon}(x_i))}$ allows one to recover the desired inequality. Such a sketch highlights the pivotal role played by the non trivial partition offered by Lemma \ref{Partitionofunity}.

Let us now consider $E,F\to M$ be two vector bundles of bounded geometry; $A:C^{\infty}(M;E)\to C^{\infty}(M;F)$ a second order differential operator with smooth coefficients. We will call it \textbf{$C^{\infty}$-bounded} if in any canonical coordinate system $A$ is written  in the form
\begin{align}\label{ellipticops.1}
A=\sum_{\vert \alpha \vert \leq 2} a_{\alpha}(y)\partial^{\alpha}_y,
\end{align}
with $ a_{\alpha}\in \mathrm{Hom}(E,F)$ satisfying uniform estimates of the form $\vert \partial^{\beta}_{y}a_{\alpha}(y)\vert \leq C_{\beta}$ for any multiindex $\beta$ and where $C_{\beta}$ does not depend on the canonical coordinate system. 


We use in  this paper  a regularity result for elliptic operators, extracted again from \cite[Lemma 1.4]{Shubin}. 
\begin{lem}\label{regularity.1}
Let $A$ be a $C^{\infty}$-bounded linear uniformly elliptic operator of order $2$ acting between two tensor bundles over $M$. For any $1<p<\infty$, if $u\in W^{2,p}_{\mathrm{loc}}(M;E)\cap L^p (M;E)$ and $A u\in L^p(M;E)$, then $u\in W^{2,p}(M;E)$ and there exists a constant $C>0$ such that
\begin{align}
\Vert u\Vert_{W^{2,p}(M)}\leq C\left(\Vert Au\Vert_{L^{p}(M)} + \Vert u\Vert_{L^p(M)}\right).
\end{align}
\end{lem}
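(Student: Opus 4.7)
The plan is to reduce the global estimate to classical local elliptic regularity on Euclidean balls via the special partition of unity, then globalize using the finite multiplicity of the covering provided by bounded geometry.

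First I would fix a covering $\{B_{\epsilon}(x_i)\}_{i\in\mathbb{N}}$ of $M$ together with the partition of unity $\{\varphi_i\}$ supplied by Lemma \ref{Partitionofunity}, chosen so that $\mathrm{supp}(\varphi_i)\subset B_{2\epsilon}(x_i)$ and each ball meets only a uniformly bounded number of others (this follows from the lower bound on the injectivity radius and the curvature control). Pulling $u$ back through the canonical exponential chart $\exp_{x_i}$, the operator $A$ becomes a linear elliptic operator on a Euclidean ball whose coefficients are, by the $C^\infty$-boundedness of $A$ and the bounded geometry of $E,F$, uniformly bounded (with uniformly controlled derivatives) and whose ellipticity constant is uniform in $i$. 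Applying the classical interior elliptic estimate for compactly supported distributions on the Euclidean ball (see e.g.\ the standard $L^p$ Bessel-potential theory used in \cite{Shubin}) to the cut-off $\varphi_i u$ yields, with a constant $C$ independent of $i$,
\begin{equation*}
\|\varphi_i u\|_{W^{s,p}}\le C\bigl(\|A(\varphi_i u)\|_{W^{s-k,p}}+\|\varphi_i u\|_{W^{t,p}}\bigr).
\end{equation*}

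Next I would expand $A(\varphi_i u)=\varphi_i\, Au+[A,\varphi_i]u$, observing that $[A,\varphi_i]$ is a differential operator of order at most $k-1$ whose coefficients are built from derivatives of $\varphi_i$ and of the $a_\alpha$ — all uniformly bounded thanks again to Lemma \ref{Partitionofunity} and the $C^\infty$-bounded hypothesis on $A$. Hence
\begin{equation*}
\|\varphi_i u\|_{W^{s,p}}^{p}\le C\bigl(\|\varphi_i\, Au\|_{W^{s-k,p}}^{p}+\|u\|_{W^{s-1,p}(B_{2\epsilon}(x_i))}^{p}+\|\varphi_i u\|_{W^{t,p}}^{p}\bigr).
\end{equation*}
Summing over $i$, the left-hand side is, by definition \eqref{Wspdef}, comparable to $\|u\|_{W^{s,p}(M)}^{p}$; the first term on the right is $\le C\|Au\|_{W^{s-k,p}(M)}^{p}$; the last term is $C\|u\|_{W^{t,p}(M)}^{p}$. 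For the commutator term, the uniform finite multiplicity of the covering produced by Lemma \ref{Partitionofunity} guarantees
\begin{equation*}
\sum_i\|u\|_{W^{s-1,p}(B_{2\epsilon}(x_i))}^{p}\le C\sum_j\|\varphi_j u\|_{W^{s-1,p}}^{p}=C\|u\|_{W^{s-1,p}(M)}^{p}.
\end{equation*}

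To close the estimate I would absorb the commutator term via a standard interpolation inequality on the global Bessel spaces: for every $\eta>0$,
\begin{equation*}
\|u\|_{W^{s-1,p}(M)}\le \eta\|u\|_{W^{s,p}(M)}+C_{\eta}\|u\|_{W^{t,p}(M)},
\end{equation*}
valid since $t<s$ (if $s-1\le t$ the inequality is immediate). Picking $\eta$ small enough to absorb the $W^{s,p}$ contribution into the left-hand side yields the announced estimate. The main obstacle is precisely ensuring that all constants appearing in the local-to-global passage are independent of $i$: this is where the bounded-geometry hypotheses on $(M,\gamma)$, $E$, $F$ and $A$ are used in an essential way, first to produce the uniform covering of Lemma \ref{Partitionofunity}, second to make the pullback of $A$ in each canonical chart a uniformly elliptic operator with uniformly $C^\infty$-bounded coefficients. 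Finally, since $u$ is only assumed to lie in some $W^{-N,p}$, one applies the argument along a bootstrap on integer shifts of $s$ (or, equivalently, one first regularizes $u$ by a mollifier commuting with the chart structure) to guarantee that every norm written above is finite before the estimate is invoked.
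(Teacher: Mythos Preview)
Your proposal is correct and follows essentially the same route as the paper: localize via the bounded-geometry partition of unity, apply uniform local elliptic estimates in canonical charts, control the commutator $[A,\varphi_i]$ as an operator of order $k-1$, and sum using finite multiplicity. The only minor variation is that you close the argument by interpolation-and-absorption of the $W^{s-1,p}$ term, whereas the paper instead iterates the estimate $\|u\|_{W^{\sigma,p}}\lesssim\|Au\|_{W^{\sigma-k,p}}+\|u\|_{W^{\sigma-1,p}}$ downward to reach $W^{t,p}$; both closures are standard and your acknowledged bootstrap addresses the finiteness issue.
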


\begin{proof}
This is the application of Lemma 1.4 of \cite{Shubin} for $s=2$, $m=2$ and $t=0$.

\end{proof}


Two operators are of particular interest in this work: $\Delta_\gamma$ and $\Delta_{\gamma, \mathrm{conf}}$. We will spell out the regularity results for these two operators for convenience.
\begin{lem}\label{regularitydelta}
Let $(M, \gamma)$ be a smooth manifold of bounded geometry and let $A$ stand for either $\Delta_{\gamma}$ or $\Delta_{\gamma,\mathrm{conf}}$. For any $1<p<\infty$, if $u\in W^{2,p}_{\mathrm{loc}}(M)\cap L^p (M)$ and $A u\in L^p(M;E)$, then $u\in W^{2,p}(M)$ and there exists a constant $C>0$ such that
\begin{align}
\Vert u\Vert_{W^{2,p}(M)}\leq C\left(\Vert Au\Vert_{L^{p}(M)} + \Vert u\Vert_{L^p(M)}\right).
\end{align}
\end{lem}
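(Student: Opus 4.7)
The plan is to reduce this statement directly to Lemma \ref{regularity.1}, the general elliptic regularity result on manifolds of bounded geometry for $C^\infty$-bounded uniformly elliptic operators. The only thing that needs to be verified is that $\Delta_\gamma$ and $\Delta_{\gamma,\mathrm{conf}}$ satisfy the hypotheses of that lemma, i.e., they are $C^\infty$-bounded, linear, uniformly elliptic differential operators of order $k=2$, acting between bundles of bounded geometry. Once this structural check is done, the estimate and the regularity statement are immediate consequences.

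First I would recall that on a manifold of bounded geometry, the trivial line bundle $M\times\R$ and the tangent bundle $TM$ are both bundles of bounded geometry: their transition functions, expressed through the Jacobians of changes between canonical normal charts $\exp_x : B_r(x) \to U_{x,r}$, have coefficients bounded together with all derivatives by constants independent of the chart (this is a standard consequence of the uniform curvature bounds and the positive injectivity radius, see e.g.\ \cite{Shubin}). Hence the bundles on which $\Delta_\gamma$ and $\Delta_{\gamma,\mathrm{conf}}$ act are in the required framework.

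Next, I would verify the $C^\infty$-boundedness of each operator in canonical coordinates. In such coordinates, $\Delta_\gamma = \gamma^{ij}\partial_i\partial_j - \gamma^{ij}\Gamma_{ij}^k\partial_k$, while $\Delta_{\gamma,\mathrm{conf}} X = \mathrm{div}_\gamma(\pounds_{\gamma,\mathrm{conf}}X)$ unfolds into a second-order differential operator on vector fields whose coefficients are polynomial expressions in $\gamma_{ij}$, $\gamma^{ij}$, and the Christoffel symbols $\Gamma_{ij}^k$ together with their first derivatives. The bounded geometry hypothesis implies uniform $C^\infty$-bounds on $\gamma_{ij}$ (with $\gamma^{ij}$ controlled from above and from below uniformly across canonical charts), and consequently uniform bounds on all derivatives of $\Gamma_{ij}^k$ (and therefore of the Ricci and lower-order terms involved). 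Thus both operators have coefficients $a_\alpha(y)$ satisfying $|\partial^\beta_y a_\alpha(y)| \le C_\beta$ with $C_\beta$ independent of the chart, which is exactly the $C^\infty$-boundedness condition of \eqref{ellipticops.1}.

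Uniform ellipticity is equally immediate: for $\Delta_\gamma$, the principal symbol is $\gamma^{ij}\xi_i\xi_j$, bounded from below by a positive constant times $|\xi|^2$ uniformly in canonical charts; for $\Delta_{\gamma,\mathrm{conf}}$, a direct computation gives a principal symbol whose positivity (for $n\ge 3$) is controlled from below uniformly by $|\xi|^2 \mathrm{Id}$ (up to a constant depending only on $n$). With these two verifications in place, Lemma \ref{regularity.1} applies verbatim with $k=2$, giving both the claimed regularity $u\in W^{s,p}(M;E)$ and the estimate. The only mildly technical step is the explicit unfolding of $\Delta_{\gamma,\mathrm{conf}}$ in local coordinates to display its $C^\infty$-bounded coefficients, but this is routine once the bounded geometry assumption is in hand.
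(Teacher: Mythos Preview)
Your proposal is correct and matches the paper's approach: the paper states Lemma~\ref{regularitydelta} immediately after Lemma~\ref{regularity.1} without an explicit proof, treating it as the direct specialization obtained by checking that $\Delta_\gamma$ and $\Delta_{\gamma,\mathrm{conf}}$ are $C^\infty$-bounded uniformly elliptic operators on bundles of bounded geometry. Your verification of these hypotheses is exactly what is needed to justify the application of Lemma~\ref{regularity.1} with $k=2$.
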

\begin{proof}
Bounded geometry hypotheses ensure that both $\Delta_\gamma$ and $\Delta_{\gamma, \mathrm{conf}}$ are bounded, while direct computations of their principal symbols ensure they are uniformly elliptic.
\end{proof}

\end{appendices}


\bigskip
{\bf Acknowledgments: } The authors would like to thank the CAPES-COFECUB, CAPES-PNPD and FUNCAP for their financial support as well as Bruno Premoselli and Stefano Pigola for their comments on a preliminary version, as well as the anonymous reviewer who highlighted   \cite[Lemma 6.7]{GicquaudHyperbolic} and directed us toward the use of Schauder's fixed point theorem in Sections 2 and 3. The authors would also thank the Alexander von Humboldt Foundation and the Potsdam University for  financial support. 

\bigskip
{\bf Data Availability Statement: } Data sharing not applicable to this article as no datasets were generated or analysed during the current study.

\bigskip
{\bf Conflict of Interest Statement: } On behalf of all authors, the corresponding author states that there is no conflict of interest.

\bibliographystyle{alpha}
\bibliography{bibliography}


\end{document}